\definecolor{red}{rgb}{1.0,0.0,0.0}
\definecolor{blu}{rgb}{0.0,0.0,1.0}
\definecolor{gre}{rgb}{0.03,0.50,0.3}
\newtheorem{theorem}{Theorem}[section]
\newtheorem{lemma}[theorem]{Lemma}
\newtheorem{cor}[theorem]{Corollary}
\theoremstyle{definition}
\newtheorem{definition}[theorem]{Definition}
\newtheorem{example}[theorem]{Example}
\theoremstyle{definition}
\newtheorem{remark}[theorem]{Remark}
\newtheorem{assumption}[theorem]{Assumption}
\numberwithin{equation}{section}
\newcommand{\bfone}{{\mathbf{1}}}
\newcommand{\eps}{{\varepsilon}}
\newcommand{\abs}[1]{\left|{#1}\right|}
\newcommand{\norm}[1]{\lVert{#1}\rVert}
\newcommand{\F}{{\mathbb{F}}}
\newcommand{\N}{{\mathbb{N}}}
\newcommand{\R}{{\mathbb{R}}}
\newcommand{\Mean}{{{\mathbb{E}}}}
\newcommand{\Prob}{{{\mathbb{P}}}}
\newcommand{\cF}{{\mathcal{F}}}
\newcommand{\om}{{\omega}}
\newcommand{\dd}{{\mathrm{d}}}
\newcommand{\ee}{{\mathrm{e}}}
\newcommand{\mail}[1]{\href{mailto:#1}{\normalfont\texttt{#1}}}
\begin{document}
\title[PDP]
{Non-local Hamilton--Jacobi-Bellman equations  
for the stochastic optimal control of  path-dependent piecewise deterministic processes}

\thanks{The authors would like to thank the anonymous referees
for many  valuable suggestions and comments.}

\author[E.~Bandini]{Elena Bandini\textsuperscript{\MakeLowercase{a},1}}
\thanks{\noindent \textsuperscript{a} University of Bologna, Department of Mathematics, Piazza di Porta San Donato 5, 40126 Bologna (Italy).}
\author[C.~Keller ]{Christian Keller\textsuperscript{\MakeLowercase{b},2}}

\thanks{\noindent \textsuperscript{b} University of Central  Florida, Department of Mathematics,  4393 Andromeda Loop N
Orlando, FL 32816 (USA).
\\
\noindent \textsuperscript{1} E-mail: \mail{elena.bandini7@unibo.it}.
\\
\noindent \textsuperscript{2} E-mail: \mail{christian.keller@ucf.edu}.
\\
The research of the first named author was partially supported by the 2018 GNAMPA-INdAM project \textit{Controllo ottimo stocastico con osservazione parziale: metodo di randomizzazione ed equazioni di Hamilton-Jacobi-Bellman sullo spazio di Wasserstein} and the 2024 GNAMPA-INdAM \textit{Problemi di controllo ottimo in dimensione infinita.}
The research of the second named author was
 supported in part  by NSF-grant DMS-2106077.
}

\date{October 19, 2025}

\subjclass[2010]{}
\keywords{}

\begin{abstract}
We study the optimal control of path-dependent piecewise deterministic processes.
An appropriate dynamic programming principle is established.
We prove that  the associated value function is the unique minimax solution of the corresponding
non-local path-dependent Hamilton--Jacobi--Bellman equation. This is the first well-posedness result
for nonsmooth solutions of fully nonlinear non-local path-dependent partial differential equations.
\end{abstract}
\maketitle
\noindent \textbf{Keywords:} Path-dependent piecewise deterministic processes; non-local path-dependent HJB equations; stochastic optimal control.

\smallskip

\noindent \textbf{AMS 2020:}  45K05, 35D99, 49L20, 90C40, 93E20

\smallskip
\pagestyle{plain}
\tableofcontents

\section{Introduction}
{\color{black} 
Since B.~Dupire's introduction of a functional It\^o calculus in \cite{dupirefunctional},
a 
significant literature on 
{\color{black} second}
order path-dependent partial differential equations (PPDEs) of the form 
\begin{align}\label{E:PPDE1}
\partial_t u(t,x)+H(t,x,u(t,x),\partial_x u(t,x),\partial_{xx} u(t,x))=0,\quad (t,x)\in C(\R_+,\R^d),
\end{align}
has been developed
(see section~\ref{SS:LiteraturePPDE} below).
Note that the derivatives in \eqref{E:PPDE1} are to be understood in the sense of \cite{dupirefunctional}.
PPDEs are relevant for non-Markovian stochastic optimal control problems,
non-Markovian stochastic differential  games, and for  the pricing 
of path-dependent options in financial mathematics. Most works deal with problems
related to diffusion processes. There are no corresponding theories for  fully  nonlinear 
non-local PPDEs, which are relevant
for non-Markovian problems related to jump processes. This work aims to fill this gap.}

{\color{black} Without our research,
already relatively simple classes of problems involving jumps and path-dependence (see Remark~\ref{R:BasicCases} below) are out of reach. A possible specific application that can be addressed 
by our theory  is presented in section~\ref{SS:DelayedHH}.}

{\color{black} 
To be more precise, we investigate  non-local  path-dependent 
Hamilton--Jacobi--Bellman (HJB) equations 
that are related to 
non-Markovian counterparts of piecewise deterministic Markov processes (PDMPs).
PDMPs have been popularized by}
M.H.A.~Davis
(see especially \cite{Davis84} and \cite{DavisBook})
 and found many applications. PDMPs are 
 jump processes
 {\color{black} with a drift component}; between consecutive jump times, they
 are deterministic and solve  ordinary differential equations (ODEs).
{\color{black} We call our non-Markovian counterparts}  \emph{path-dependent} piecewise deterministic processes (PDPs).
{\color{black} They} differ from and extend  PDMPs inasmuch as
 all relevant data 
  are allowed to depend on the realized process history as opposed to  just the realized state as is the case for PDMPs. 
  E.g., between consecutive jump times, path-dependent PDPs solve delay functional differential equations.
Consequently, our theory broadens the realm of possible applications.\footnote{
General, not necessarily Markovian, PDPs 
are briefly mentioned  {\color{black} on pp.~25--26} in \cite{Jacobsen}. We are not aware of any 
thorough treatment of 
path-dependent PDPs. 
}

{
{\color{black} Our main results are existence and uniqueness for terminal value problems involving 
non-local path-dependent HJB equations of the form}
\begin{equation}\label{E:Intro:PPDE}
\begin{split}
&\partial_t u(t,x) + \inf_{a\in A} \Big \{\lambda(t,x,a)\,\int_{\R^d} [u(t, \bfone_{[0,t)}\,x+\bfone_{[t,\infty)}  e)-u(t,x)]\,Q(t,x,a,\dd e)\\
&\qquad +\ell(t,x,a)+(f(t,x,a),\partial_x u(t,x))\Big 
\}=0,\quad (t,x)\in [0,T)\times D(\R_+,\R^d),
\end{split}
\end{equation}
{\color{black} where $D(\R_+,\R^d)$ denotes the Skorokhod space 
{\color{black} and} all terms 
 are
to be understood as \emph{non-anticipating}, e.g., $u(t,x)=u(t,x_{\cdot\wedge t})$.}
{\color{black} Moreover,  we show that their solutions coincide with value functions of
appropriate  
stochastic optimal control problems related to path-dependent PDPs.
To this end, we establish a suitable  dynamic programming principle.}
\begin{remark}
{\color{black} A precise definition of the derivatives in \eqref{E:Intro:PPDE} is given in subsection~\ref{SS:classical}
(note that they are not Fr\'echet derivatives).}
{\color{black} Just as in the case of the (local) PPDE~\eqref{E:PPDE1},
these derivatives  can be understood in the sense of  B.~Dupire's
so-called horizontal and vertical derivatives  in \cite{dupirefunctional}.
However, as it is more common in the 
literature on 
{\color{black} first}
order PPDEs, we prefer to use the earlier notion
of the so-called coinvariant derivatives on path spaces, which are due to A.V.~Kim (see, e.g., \cite{Kim85}  and \cite{KimBook}).
Both notions of derivatives are essentially equivalent (see, e.g., section~3 in \cite{BK1} for more details).}
\end{remark}
Counterparts of \eqref{E:Intro:PPDE}  in the non-path-dependent case would be of the form
\begin{equation}\label{E:Intro:PDE}
\begin{split}
&\partial_t \tilde{u}(t,\tilde{x}) +\inf_{a\in A} \Big \{\tilde{\lambda}(t,\tilde{x},a)
\,\int_{\R^d} [\tilde{u}(t, e)-\tilde{u}(t,\tilde{x})]\,\tilde{Q}(t,\tilde{x},a,\dd e)\\
&\qquad +\tilde{\ell}(t,\tilde{x},a)+(\tilde{f}(t,\tilde{x},a),\partial_{\tilde{x}} \tilde{u}(t,\tilde{x}))
\Big \}=0,\quad (t,\tilde{x})\in [0,T)\times \R^d.
\end{split}
\end{equation}
As a rule, classical solutions to such HJB equations rarely exist and theories of non-smooth solutions such as 
viscosity solutions (see, e.g., \cite{CrandallIshiiLions}, \cite{BardiCapuzzoDolcetta}) or minimax solutions (see, e.g.,
 \cite{Subbotin}) are needed. 
{\color{black} In this work, we use minimax solutions, mainly due to a technical obstacle, which is explained in
section~\ref{SS:specificDifficulty}.}
 
 {\color{black}
\begin{remark}\label{R:BasicCases}
Note that our new theory is needed for relatively basic cases.

 (i) Optimal control of {\color{black} or 
 option pricing related to} standard 
 PDMPs (see, e.g., section~4 in \cite{BR10PDMP} and section~10 in \cite{Jacobsen})
  with the same data as the HJB equation~\eqref{E:Intro:PDE}
  but with path-dependent terminal cost {\color{black} or path-dependent payoff}, 
  e.g., $h(x)=\sup_{0\le t\le T} \abs{x(t)}$. Then the corresponding
  HJB equation is a PPDE of the form 
 \begin{align*}
 &\partial_t u(t,x) +\inf_{a\in A} \{\tilde{\lambda}(t,x(t),a)
\,\int_{\R^d} [u(t, \bfone_{[0,t)}\,x+\bfone_{[t,\infty)}  e)-u(t,x)]\,\tilde{Q}(t,x(t),a,\dd e)\\
&\qquad +\tilde{\ell}(t,x(t),a)+(\tilde{f}(t,x(t),a),\partial_{x} \tilde{u}(t,x))
\}=0,\quad (t,x)\in [0,T)\times  D(\R_+,\R^d),
\end{align*}
with terminal condition $u(T,x)=h(x)$, $x\in D(\R_+,\R^d)$.
Note that $A$ would be a singleton in the case of option pricing.
 
 (ii)  Change of the ODE in a PDMP to  a delay differential equation. Even 
 results on well-posedness for HJB equations
 associated to the optimal control of  delay differential equations  with  jumps that follow a Poisson process
 are new. If $d=1$, then the corresponding HJB equation would be of the form
 \begin{align*}
& \partial_t u(t,x) +\lambda [u(t, \bfone_{[0,t)}\,x+\bfone_{[t,\infty)}\,(x(t)+1))-u(t,x)]\\ &\qquad 
+\inf_{a\in A} 
(\tilde{f}(t,x((t-\tau)\vee 0),a),\partial_x u(t,x))
=0,\quad (t,x)\in [0,T)\times D(\R_+,\R^d),
 \end{align*}
 as a special case of \eqref{E:Intro:PPDE}.
 Similar and actually more general models with jumps and delays  are used in \cite{AH20} 
 for option pricing.
   \end{remark}}


\subsection{Related research}\label{SS:LiteraturePPDE}
 PDEs associated to PDMPs, i.e., variations of \eqref{E:Intro:PDE}, were studied in 
\cite{Vermes85Stoch}, \cite{Soner86SICON-II}, \cite{Ye94}
\cite{DempsterYe95AAP},   \cite{DavisFarid},
\cite{ForwickEtAl04},  \cite{Goreac12}, \cite{Bandini18COCV},
\cite{Bandini19SICON},  \cite{ACE19SIFIN}, \cite{BandiniThieullen20AMO},
{\color{black} and \cite{ColaneriEtAl20SPA},} to mention but a few works.
{\color{black}
\begin{remark}
The data in the above literature are either not time-dependent or 
required to be continuous with respect
to time. Our results are stronger in the sense that we require our data to be only
measurable with respect to time.
\end{remark}
}

{\color{black}
Next, we provide 
a short overview of the literature on PPDEs.

Existence and uniqueness of minimax solutions for  
{\color{black} first}
 order path-dependent Hamilton--Jacobi equations
was established in \cite{Lukoyanov03} and of viscosity solutions in \cite{Lukoyanov07}.
Wellposedness of viscosity solutions  for  
{\color{black} first}
order path-dependent HJB equations under significantly weaker
conditions (only continuity of the data in  the path variable $x$ with respect to sup norm is needed instead
of requiring continuity with respect to an $L_p$ norm) was proved in \cite{zhou2020viscosity1st}.
For more details as well as a comprehensive overview for  
{\color{black} first}
order PPDEs,
 we refer to the recent survey paper \cite{GL24survey}.
 
Wellposedness of viscosity solutions for  semilinear 
{\color{black} second}
order  PPDEs 
was established in \cite{EKTZ11}
and, subsequently, for semilinear 
{\color{black} second}
order path-de\-pen\-dent integro-differential equations in \cite{Keller16SPA}.
Note that \cite{Keller16SPA} is the only relevant work
for non-local PPDEs and that there have been no works on nonsmooth solutions for fully nonlinear
non-local PPDEs such as \eqref{E:Intro:PPDE}.

Many works in the literature on fully nonlinear 
{\color{black} second}
order PPDEs
 such as \cite{ETZ_I}, \cite{ETZ_II}, \cite{EkrenZhang16PUQR}, \cite{RTZ17},
\cite{RR20SIMA}, \cite{cosso21v2path}, \cite{zhou2020viscosity}, and \cite{Keller24}  
cover 
HJB equations of the form
\begin{equation}\label{E:2nd:HJB:PPDE}
\begin{split}
&\partial_tu(t,x)+\inf_{a\in A} \left\{\frac{1}{2}\sigma(t,x,a)^2\partial_{xx} u(t,x)+\ell(t,x,a)+f(t,x,a)\partial_xu(t,x)\right\}=0,\\
&\qquad\qquad(t,x)\in [0,T)\times C(\R_+,\R^d).
\end{split}
\end{equation}
Regarding existence and uniqueness of viscosity solutions for
{\color{black} second}
order path-de\-pen\-dent HJB equations such as \eqref{E:2nd:HJB:PPDE}, \cite{zhou2020viscosity} and \cite{Keller24} 
allow for the weakest conditions\footnote{
We refer to subsection~1.3 in \cite{Keller24} for a more detailed discussion.
} among the just mentioned works such as a fairly general
possibly degenerated controlled diffusion term as well as  continuity of the data
in the sup norm (as opposed to continuity in an $L_p$-norm).
It should be mentioned that the notion of viscosity solution in \cite{zhou2020viscosity} is 
a modification of
the usual one in the non-path-dependent case
(see \cite{CrandallLions83TAMS})
and uses pointwise tangency for the employed test functions
whereas the notion in  \cite{Keller24} uses tangency in mean (which was introduced in \cite{EKTZ11}).

\begin{remark}
Instead of the controlled diffusion term $(1/2)\sigma(t,x,a)^2\partial_{xx} u(t,x)$ in \eqref{E:2nd:HJB:PPDE},
we have  the comparable controlled non-local term
\begin{align*}
\int_{\R^d} [u(t, \bfone_{[0,t)}\,x+\bfone_{[t,\infty)}  e)-u(t,x)]\lambda(t,x,a)\,Q(t,x,a,\dd e)
\end{align*}
in \eqref{E:Intro:PPDE}.
Our work allows for the data to be continuous in $x$ with respect to the sup norm as well. 
Furthermore, our data need only be measurable with respect to time whereas in 
\cite{zhou2020viscosity} continuity in time is required.
\end{remark}
}

{\color{black}Last but not least, we want to note that, besides the PPDE approach
(based on path derivatives by B.~Dupire~\cite{dupirefunctional} or A.V.~Kim~\cite{KimBook}),
it is also very common for the treatment of 
stochastic optimal control problems with delays
 to use the Hilbert space $L^2(-\tau,0;\R^d)$ or the space $C([-\tau,0],\R^d)$ as state space and
consider HJB equations on those spaces with derivatives  understood as Fr\'echet derivatives: For a
corresponding overview, 
we refer  the reader to the monograph \cite{FabbriGozziSwiech}
(in particular to  its section~2.6.8  and the references therein).}

\subsection{A simplified presentation of the control problem} 
Given initial conditions $(t,x)\in [0,T)\times D(\R_+,\R^d)$, our stochastic optimal control problem is to minimize
a cost functional 
\begin{align*}
J(t,x,\alpha):=\Mean \left[\int_t^T \ell(s,X_{\cdot\wedge s}^{t,x,\alpha},\alpha(s))\,\dd s+h(X_{\cdot\wedge T}^{t,x,\alpha})\right]
\end{align*}
over an appropriate class of controls $\alpha$, {\color{black}which will be discussed shortly.}
The controlled stochastic process $X^{t,x,\alpha}$ is a path-dependent {\color{black} PDP}  
with jump times $T_n^{t,x,\alpha}$, $n\in\N$, and post-jump locations
$E_n^{t,x,\alpha}$, $n\in\N$, such that the 
 the following holds: 
\begin{align*}
&X^{t,x,\alpha}(s)=x(s)\quad\text{for each $s\in [0,t]$},\\
&T_0^{t,x,\alpha}=t,\quad\text{$T_1^{t,x,\alpha}$ is the first jump time of $X^{t,x,\alpha}$, etc.}\\
&E_0^{t,x,\alpha}=x(t),\quad\text{$E_1^{t,x,\alpha}$ is the first post-jump location of 
 $X^{t,x,\alpha}$, }\\ 
 &\qquad\qquad\qquad\qquad
 \text{i.e., $E_1^{t,x,\alpha}=X^{t,x,\alpha}(T_1^{t,x,\alpha})$, etc., and}\\
 & \frac{d}{ds} X^{t,x,\alpha}(s)=f(s,X_{\cdot\wedge s}^{t,x,\alpha},\alpha(s))\text{ for a.e.~$s\in (T_n^{t,x,\alpha},T_{n+1}^{t,x,\alpha})$.}
\end{align*}
The probability distributions of the jump times and post-jump locations are specified via a survival
function $F^{t,x,\alpha}$ and a random measure $Q(\cdot, \dd e)$ on $\R^d$, e.g.,
\begin{align*}
\Prob (T^{t,x,\alpha}_1>s)&=F^{t,x,\alpha}(s)\text{ for every $s>{\color{black}t}$ and}\\
\Prob( E^{t,x,\alpha}_1\in B\vert T^{t,x,\alpha}_1)&=
Q(T^{t,x,\alpha}_1, \phi_{\cdot\wedge T^{t,x,\alpha}_1}^{t,x,\alpha}, \alpha(T^{t,x,\alpha}_1) ,B)\text{ for each Borel set $B\subset\R^d$,}
\end{align*}
where  $\phi^{t,x,\alpha}$ is  the unique
solution of $\frac{d}{ds} \phi^{t,x,\alpha}(s)=f(s,\phi_{\cdot\wedge s}^{t,x,\alpha},\alpha(s))$ on $(t,\infty)$ with
initial condition $\phi^{t,x,\alpha}(s)=x(s)$ on $[0,t]$, i.e., $\phi^{t,x,\alpha}$ and $X^{t,x,\alpha}$
coincide before the first jump time.
\medskip

\textit{The class of controls: Differences between the Markovian and the path-dependent cases.}
In both cases, controls are open-loop between jump times.

In the Markovian case, admissible controls are of the form\footnote{For readability's sake, we omit the superscript $\phantom{}^{t,x,\alpha}$ or part of it.}
\begin{align}\label{E:Intro:AdmissibleControls0:Markov}
\tilde{\alpha}(t)=\tilde{\alpha}_n(t\vert T_n,E_n),\quad T_n\le t<T_{n+1}.
\end{align}
This structure leads naturally to an equivalent (discrete-time) Markov decision model
(see, e.g., \cite{DavisBook}).
{\color{black} Such an equivalency is of major importance in the theory of PDMPs and a suitable counterpart 
{\color{black} is}
crucial in this work as well. 

In the path-dependent case, {\color{black} our} 
 controls {\color{black}are}  of the form
\begin{align}\label{E:Intro:AdmissibleControls1}
\alpha(t)=\alpha_n(t\vert T_n, \{X^\alpha(s)\}_{0\le s\le {T_n}}),\quad T_n\le t<T_{n+1},
\end{align}
where $X^\alpha$ is our controlled  piecewise deterministic process. 
{\color{black}
In the spirit of the Markovian case, 
we establish a correspondence between our continuous-time control problem and 
 a related discrete-time model
(see section~\ref{SS:value} and especially Theorem~\ref{0L:value:DPP}).}
{\color{black} To this end, 
we 
{\color{black} express}
the history $\{X^\alpha(s)\}_{0\le s\le t}$ {\color{black} in \eqref{E:Intro:AdmissibleControls1}}
as an appropriate function 
{\color{black} that depends on} 
  the past jump times $T_1$, $\ldots$,
 $T_n$, 
 the past jump locations $E_1$, $\ldots$, $E_n$,
 {\color{black} and, in contrast to the Markovian case,  also on the control history, i.e., on all realized open-loop controls
 (see Definition~\ref{D:randomizedPolicy} for the precise definition of the policies for 
 our  control problem).}} 
  {\color{black}Doing so is needed for}  
the proof of the  dynamic programming principle (Theorem~\ref{0T:value:DPP}), which is crucial
 in establishing that solutions of the HJB equation \eqref{E:Intro:PPDE} coincide with the value function
 of our stochastic optimal control problem.

\subsection{Our approach  in dealing with specific difficulties concerning \label{SS:specificDifficulty} 
 the non-local path-dependent HJB equation  \eqref{E:Intro:PPDE}}
We follow the methodology by Davis and Farid in \cite{DavisFarid}, where existence and uniqueness of viscosity solutions
for HJB equations related to PDMPs are established via a fixed-point argument
{\color{black}(see the beginning of section~\ref{S:HJB} for a detailed outline)}. 
However, our situation in the path-dependent case
leads to several additional obstacles. We continue by describing some of those difficulties and how they will be circumvented.
First note that {\color{black} the}  main ingredients of the Davis--Farid methodology are   existence and uniqueness  results
of certain  \emph{local} HJB equations, in our case, these HJB equations are path-dependent and of the form
\begin{equation}\label{E:Intro:localPPDE}
\begin{split}
\partial_t u(t,x)+{\color{black}H^\psi(t,x,u(t,x),\partial_x u(t,x))}=0, \quad (t,x)\in [0,T)\times D(\R_+,\R^d),
\end{split}
\end{equation}
where $\psi:[0,T]\times D(\R_+,\R^d)\to\R$  and  ${\color{black}H^\psi(t,x,y,z)}$ is of the form
\begin{equation}\label{E:Intro:localPPDE:Hamiltonian}
\begin{split}
&{\color{black}H^\psi(t,x,y,z)}=
\inf_{a\in A} \{\lambda(t,x,a)\int_{\R^d} [\psi(t,\bfone_{[0,t)} x+\bfone_{[t,\infty)} e)-y]\,Q(t,x,a,\dd e)\\
&\quad +\ell(t,x,a)+(f(t,x,a),z)\},\quad (t,x,y,z)\in [0,T)\times D(\R_+,\R^d)\times\R\times\R^d.
\end{split}
\end{equation}
{\color{black} Note that \eqref{E:Intro:PPDE} is identical to   \eqref{E:Intro:localPPDE} in case  $\psi\equiv u$.}

One of our obstacles in the path-dependent case is a possible lack of regularity of the term
$\psi(t,\bfone_{[0,t)} x+\bfone_{[t,\infty)} e)$ in \eqref{E:Intro:localPPDE:Hamiltonian}.
Even if $\psi$ is continuous, the map $t\mapsto\psi(t,\bfone_{[0,t)} x+\bfone_{[t,\infty)} e)$ might not be 
continuous (see  appendix~\ref{S:Appendix:D}) and 
therefore we can expect the Hamiltonian $F_\psi$ to be at most measurable with respect to $t$.
{\color{black} To deal with this issue,}
we develop in our companion paper \cite{BK1}
 a minimax solution theory for path-dependent Hamilton--Jacobi equations with time-measurable coefficients
and also with ``$u$-dependence" of the Hamiltonian, {\color{black} i.e.,
we cover equations of the form $\partial_t u+H(t,x,u,\partial_x u)=0$, whereas the previous
minimax solutions literature on PPDEs covers equations of the form $\partial_t u+H(t,x,\partial_x u)=0$.}  There seems to have been no treatment
of any of those two aspects in the literature 
(see \cite{GLP21AMO} for an overview)
and thus the results in \cite{BK1} ought to be of independent interest besides
 the current matter. 
 {\color{black}   Also note that, while we use in \cite{BK1} a combination of minimax and viscosity
 solution techniques to establish uniqueness for minimax solutions and to verify that value functions
 of relevant optimal control problems are minimax solutions of the corresponding HJB equations,
  a  complete viscosity solution theory for PPDEs with time-measurable
 coefficients does not exist to the best of our knowledge. For further discussion on relevant background and difficulties, we refer to section~1 in \cite{BK1}.}
 {\color{black} Furthermore, note that there has been no minimax solution theory related to optimal control of PDMPs.}
 
 A second obstacle is the following. A standard proof of a comparison principle between
 appropriate sub- and supersolutions of \eqref{E:Intro:localPPDE}  (as done in \cite{BK1})
 requires at least some regularity of ${\color{black}H^\psi}$ with respect to its $x$-component. Typically, 
 some type of uniform continuity
 is required. For simplicity (even though it might be more difficult to prove), let us require Lipschitz continuity, i.e.,
 we would need
 \begin{align*}
& \abs{\int_{\R^d} [\psi(t,\bfone_{[0,t)} x_1+\bfone_{[t,\infty)} e)-
 \psi(t,\bfone_{[0,t)} x_2+\bfone_{[t,\infty)} e)\,Q(t,x_1,a,\dd e)}\\
 &\qquad\qquad\le L_\psi 
 \sup_{0\le s\le t} \abs{x_1(s)-x_2(s)},
 \end{align*}
 which, in the Markovian or state-dependent case, is trivially satisfied because
 \begin{align*}
 \psi(t,\bfone_{[0,t)} x_1+\bfone_{[t,\infty)} e)-
 \psi(t,\bfone_{[0,t)} x_2+\bfone_{[t,\infty)} e)=\tilde{\psi}(t,e)-\tilde{\psi}(t,e)=0
 \end{align*}
 for some function $\tilde{\psi}:[0,T]\times\R^d\to\R$ with $\tilde{\psi}(t,x(t))=\psi(t,x)$.
The slightly involved 
 proof of {\color{black}Theorem}~\ref{L:barpsi:reg:Step1} deals with this issue by establishing
 Lipschitz continuity for our value function $V$, which yields the desired regularity at least for 
  ${\color{black}H^\psi}$ with $\psi=V$.

 \subsection{The delayed Hodgkin--Huxley model as 
 possible application}\label{SS:DelayedHH}

 {\color{black} The Hodgkin--Huxley (HH) model, developed in 1952 by 
A. Hodgkin and 
A. Huxley in \cite{HH52},   
 describes how action potentials (nerve impulses) are initiated and propagated along axons.
It is based on the concept of voltage-gated ion channels that allow ions to flow through the membrane of the neuron, generating electrical signals in response to changes in the membrane potential. 
In the original HH model, the transitions between different states of ion channels (e.g., from closed to open or inactivated states) are deterministic and described by ODEs with specific kinetic parameters. 
In reality, the behaviour of the ion channels is subject to random fluctuations, and it is natural to model the state transitions through a continuous-time Markov chain $d_t$, see e.g., \cite{Kampen}. 

For such a model one can consider  optogenetics control problems. Optogenetics allows precise control of cellular activity by using light to manipulate genetically engineered ion channels that are sensitive to light.  
In the control problem new (rhodopsin) channels that are sensitive to light are inserted in the neuron.
 Experimentally, the channel is illuminated and the effect of
the illumination is to put the channel in one of its conductive states. From a mathematical point of view, this type of problems has been successfully studied (in the case of spatio-temporal models) by dealing with the theory of controlled infinite dimensional PDMPs, see e.g., \cite{BandiniThieullen20AMO}, \cite{RTT17PDMP}. 

On the other hand,  recent studies revealed that significant
delays could take place between voltage changes and ion channel activation/inactivation dynamics. As a matter of fact, the response of the gating variables to changes in the membrane potential is not instantaneous, but rather involves some time lag: 
ion channels may have intrinsic delays in their opening and closing, as well as delays in the response to changes in membrane voltage. 
The need to capture more realistic dynamics of neurons have lead to the development of the so-called  delayed  HH models, see e.g.  \cite{CessacVolkov}.
In this extended  model,   the membrane potential  $v$ would  follow a delayed ODE of the type 
$$
C_m v'(t) = - I_{ion}(v(t), m(t-\tau))+ I_{ext}(t), \quad v(0)=v \in [0, V_{max}].
$$
Here $I_{ion}(v(t), d(t-\tau))$, that represents the ionic current, would  depend on the delayed state of the gating variable $d(t-\tau)$, and   the stochastic gating state  $d$ would be  a pure jump process evolving  according to a jump measure  with compensator $\lambda(v(t-\tau), d(t-\tau))Q(v(t-\tau),d(t-\tau), \rm dy)\,{\color{black} \dd t}$.

Our theory is well suited to manage such a  model. Therefore, as a future application,   corresponding optogenetic control problems could be  studied   by suitably adapting our results to the bounded domain framework.

 }

\subsection{Organization of the rest of the paper}
Section~\ref{S:Notation} introduces frequently used notation.
In section~\ref{S:Canonical},  two canonical sample spaces are specified, the path space $\Omega$ 
(the Skorokhod space, previously denoted by $D(\R_+,\R^d)$)
and the canonical space of our controlled marked point processes.
Section~\ref{S:OptimalControl} provides a precise description of our stochastic optimal control problem
including assumptions for the data as well as statement and proof of the dynamic programming principle.
Section~\ref{S:HJB} contains the complete treatment of our non-local path-dependent HJB equation:
The fixed-point approach in the spirit of Davis and Farid, 
appropriate notions of minimax solutions, and, ultimately, existence, uniqueness, and a comparison principle for
\eqref{E:Intro:PPDE} together with relevant terminal conditions. The appendices include technical as well as 
fundamental material. 
Appendix~\ref{S:Appendix:MDP} is relevant in its own right (independent from its applications in this paper):
It connects a fairly general path-dependent discrete-time  decision model  to a standard discrete-time decision model.
Thereby, the Bellman equation for the path-dependent model is derived. Appendix~\ref{A:TechnicalProofs} contains technical proofs. Finally, appendix~\ref{S:Appendix:D}
provides results on a  possible lack of regularity of certain functionals on Skorokhod space.

\section{Notation and preliminaries}\label{S:Notation}
Let $\N$ be the set of all strictly positive integers and  $\N_0=\N\cup\{0\}$.
Fix $d\in\N$ and $T\in (0,\infty)$.
 Let 
  $\Delta\not\in \R^d$ be a cemetery state.
Given $(s,x)\in \R_+\times D(\R_+,\R^d)$ and $e\in \R^d$, define
\begin{align}\label{E:concatenation}
x\otimes_{s} e\quad\text{by\,\,\,\, $(x\otimes_{s} e)(t)=\begin{cases}
x(t),\,\textup{if}\,\, t\in [0,s)\\
e,\, \textup{if}\,\,t\in [s,\infty).
\end{cases}$}
\end{align}
For measurability properties in $(x,t,e)$, see Theorem 96, 146-IV, in 
\cite{DMprobPot}.

\begin{remark}\label{R:concatenation}
Clearly,
$x\otimes_s e=x(\cdot\wedge s)\otimes_s e$. This  identity will simplify notation.
\end{remark}


{\color{black} Given sets $S$ and $\tilde{S}$, we denote by $\tilde{S}^S$ the set of all functions from $S$ to $\tilde{S}$.}

Given a topological space $S$, we denote by $\mathcal{B}(S)$ its Borel $\sigma$-field,
 by $\mathscr{P}(S)$ the set of all probability measures on $\mathcal{B}(S)$,
 and by $B(S)$ the set of all bounded Borel-measurable functions from $S$ to $\R$.
 Moreover, we write $C(S)$, {\color{black} $C_b(S)$,} $\mathrm{USC}(S)$, 
{\color{black} $\mathrm{BUSC}(S)$},  
$\mathrm{LSC}(S)$,  {\color{black} and $\mathrm{BLSC}(S)$, resp.,} for the sets
 of all continuous, {\color{black}bounded continuous,}  
 upper semicontinuous,  {\color{black} bounded upper semicontinuous, }
lower semicontinuous,
 {\color{black}  and bounded lower semicontinuous }
  functions from $S$ to $\R$, resp.
 Given another topological space $\tilde{S}$, we write $C(S,\tilde{S})$ for the set of all continuous functions
 from $S$ to $\tilde{S}$. 
 {\color{black} If $S$ is a Borel space, then
 we write $\mathrm{BLSA}(S)$ for the set of all bounded lower semi-analytic functions from $S$
 to $\R$ (for a detailed treatment of analytic sets and lower semi-analytic functions,
 we refer the reader to sections~7.1 and 7.2 in \cite{BS}).}
 
 {\color{black}  
Let us  also recall the notion of universal measurability: 
 A subset $\underline{S}$ of
a Borel space $S$  is \emph{universally measurable} if, for every $\Prob\in\mathscr{P}(S)$,  the set $\underline{S}$
belongs to the completion of $\mathcal{B}(S)$ with respect to $\Prob$ 
(see Definition~7.18 of \cite{BS}).}
 
 We  frequently use the notation 
 $\bfone_S$ for indicator functions of some set $S$, i.e., $\bfone_S(t)=1$ if $t\in S$ and 
 $\bfone_S(t)=0$ if $t\not\in S$,  and the notation
 $\delta_r$ for a Dirac measure concentrated at some point $r$; the domains of $\bfone_S$ and 
 of $\delta_r$ will be clear from context.
 We  also always use the convention
 $\infty\cdot 0=0$.


{\color{black} Occasionally, we use 
$\norm{x}_s:=\sup_{0\le t\le s} \abs{x(t)}$
given $s\in [0,T]$ and $x\in D(\R_+,E)$.}
 {\color{black} Moreover, given a bounded real-valued function $\varphi$, we denote its sup norm
 by $\norm{\varphi}_\infty$.}

\section{The canonical setup}\label{S:Canonical}

\subsection{The canonical path space}\label{SS:CanonicalPathSpace}
Let $\Omega=D(\R_+,\R^d)$. The canonical process
$X=(X_t)_{t\ge 0}$ on $\Omega$ is defined by $X_t(\omega)=\omega(t)$ for each
$(t,\om)\in\R_+\times\Omega$. Let $\F^0=\{\cF^0_t\}_{t\ge 0}$ be the   filtration generated
by $X$ and let $\Omega$ be equipped  with the $\sigma$-field $\cF^0:=\vee_{t\ge 0} \cF^0_t$.

We consider non-empty subsets of $\R_+\times\Omega$  to be equipped with the pseudo-metric 
\begin{align}\label{E:SS:CanonicalPathSpace:PseudoMetric}
((t,x),(s,\tilde{x}))\mapsto\abs{t-s}+\sup_{0\le r <\infty} \abs{x(r\wedge t)-\tilde{x}(r\wedge s)}.
\end{align}

\subsection{The canonical space of controlled marked point processes}\label{SS:Canon:Marked}
First, consider a non-empty Borel subset $A$  of a Polish space and fix an element $a^\circ\in A$. 
The set  $A$  is 
 our control action space. 
Next,  define a  
set of open-loop controls
\begin{align*}
\mathcal{A}:=\{\alpha:\R_+\to A\text{ Borel-measurable}\}
\end{align*}
with cemetery state $\Delta^\prime\not\in\mathcal{A}$.
\begin{remark}\label{R:AAisBorelSpace}
Note that  $\mathcal{A}$ is a Borel space when it is  equipped with $\mathcal{B}(\mathcal{A})$, the smallest $\sigma$-algebra under which all functions 
$\alpha\mapsto \int_0^\infty \ee^{-t}\eta(t,\alpha(t))\,\dd t$,   $\mathcal{A}\to\R$,  
$\eta\in B(\R_+\times A)$,  are measurable (Lemma~1 in \cite{Yushkevich80}).
\end{remark}

Our canonical space $\check{\Omega}$ is  
 defined
as the set of all  sequences $\check{\omega}=(t_n,e_n,\alpha_n)_{n\in\N_0}$ 
for which the following holds (cf.~Remarque~III.3.43 in \cite{JacodBook}):


(i) $(t_n,e_n,\alpha_n)\in \left((0,\infty)\times \R^d\times\mathcal{A}\right)\cup\{(\infty,\Delta,\Delta^\prime)\}$.

(ii) $t_n\le t_{n+1}$.

(iii) $t_n<t_{n+1}$  unless $t_n=\infty$.





For each $n\in\N_0$, we define  canonical mappings  ${T}_n:\check{\Omega}\to (0,\infty]$, ${E}_n:
\check{\Omega}\to \R^d\cup\{\Delta\}$,  and $\mathcal{A}_n:\check{\Omega}\to \mathcal{A}\cup\{\Delta^\prime\}$  by
$${T}_n(\check{\omega})=t_n,  \quad {E}_n(\check{\omega})=e_n, \quad \mathcal{A}_n(\check{\omega})=\alpha_n,
\qquad \check{\omega}=(t_{\color{black}j},e_{\color{black}j},
\alpha_{\color{black}j})_{\color{black}{j}\in\N_0}\in\check{\Omega},$$
{\color{black} and  we also  write $\mathcal{A}_n(t)$  instead of
 $\check{\omega}\mapsto [\mathcal{A}_n(\check{\omega})](t)$, $\check{\Omega}\to A$,
$t\in\R_+$, i.e.,
\begin{align*}
[\mathcal{A}_n(t)] (\check{\omega})=\alpha_n(t),
\qquad \check{\omega}=(t_j,e_j,\alpha_j)_{j\in\N_0}\in\check{\Omega}.
\end{align*}
}
Consider the corresponding
random measure 
$$\check{\mu}(\dd t\,\dd e\,\dd \alpha)=\sum_{n\ge 0}\bfone_{\{T_n<\infty\}}\,\delta_{({T}_n,{E}_n,\mathcal{A}_n)} (\dd t\,\dd e\,\dd \alpha),
$$
 let $\check{\F}=\{\check{\cF}_t\}_{t \ge 0}$ be the  filtration generated by $\check{\mu}$, and
equip $\check{\Omega}$ with the $\sigma$-field $\check{\cF}:=\vee_{t\ge 0} \check{\cF}_t$.

\begin{remark}
The filtration $\check{\F}$ is right-continuous (Theorem~T25, p.~304, in \cite{BremaudBook})
{\color{black}  and, by definition,  is a raw filtration.   It  is emphasized in 
Remark~4.2.1 of \cite{Jacobsen} that   it is ``essential" not to use
 \emph{completions} of $\check{\F}$ in the context of the construction of marked point processes.}
\end{remark}

For each $n\in\N$, we shall also need the space\footnote{Our numbering is consistent with
the one of the marginals $r_N$ in chapters 8 and 9 of \cite{BS}.}
\begin{align*}
\check{\Omega}_n&:=\{ 
\check{\omega}_n \in((\R_+\times E\times \mathcal{A})\cup\{(\infty,\Delta,\Delta^\prime)\})^{n}:\\ 
&\qquad\qquad\exists (t_j,e_j,\alpha_j)_{j\in\N_0}\in\check{\Omega}\text{ with }
\check{\omega}_n= (t_j,e_j,\alpha_j)_{j=0}^{n-1}\}
\end{align*}
with corresponding filtration $\check{\F}_n=\{\check{\cF}_{n,t}\}_{t\ge 0}\}$ 
and $\sigma$-field $\check{\cF}_{n,\infty}:=\vee_{t\ge 0} \check{\cF}_{n,t}$.
With slight abuse of notation, we write $T_j$, $E_j$, and $\mathcal{A}_j$ for
the canonical mappings on $\check{\Omega}_n$ and do not indicate their domains
(i.e., $\check{\Omega}_n$ vs.~$\check{\Omega}$) if there is no danger of confusion.

\section{Optimal control of path-dependent PDPs}\label{S:OptimalControl}

\subsection{The data}\label{S:Data1}
First, consider mappings 
\begin{align*}
&f: \R_+\times\Omega\times A\to \R^d,\quad
\lambda:\R_+\times\Omega\times A\to\R_+,\quad
Q:\R_+\times\Omega\times A\times\mathcal{B}(\R^d)\to \R_+,
\end{align*}
which are the characteristics for our PDP{\color{black}, i.e., $f$ specifes its deterministic flow, $\lambda$ the
distribution of its jump times, and $Q$ the distribution of its post-jump locations}. Next, consider mappings
\begin{align*}
\ell:\R_+\times\Omega\times A\to\R_+\qquad\text{and}\qquad h:\Omega\to\R_+,
\end{align*}
which will serve as running and terminal costs.

The following standing assumptions are in force throughout the rest of 
this paper. 

\begin{assumption}\label{A:data:controlled} Let $f$, $\lambda$, $\ell$, and $h$ satisfy the following conditions
(the same as in section~{\color{black}7} in \cite{BK1}).

(i) For a.e.~$t\in\R_+$, the map $(x,a)\mapsto (f,\lambda,\ell)(t,x,a)$, $\Omega\times A\to \R^d\times\R_+\times\R_+$,
is continuous, where $\Omega$ is considered  here to be equipped with 
the seminorm $x\mapsto \sup_{ s\le t} \abs{x(s\wedge t)}$.

(ii) For every $(x,a)\in\Omega\times A$, the map $t\mapsto (f,\lambda,\ell)(t,x,a)$, $\R_+\to \R^d\times\R_+\times\R_+$,
is Borel-measurable.

(iii)  There are constants $C_f$, $C_\lambda\ge 0$, 
such that, 
for every $(s,x)\in \R_+\times \Omega$,
 \begin{align*}
& \sup_{a\in A}\abs{f(s,x,a))}
\le C_f(1+\sup_{t\le s}\abs{x(t)}),\quad
  \sup_{a\in A} \lambda(t,x,a)\le C_\lambda,\qquad\text{and}\\
&\sup_{a\in A} \ell(s,x,a)+h(x)\le C_f.
\end{align*}

(iv) There is a constant $L_f\ge 0$ such that, for every $(s,x,\tilde{x})\in \R_+\times \Omega\times\Omega$,
\begin{align*}
&\sup_{a\in A}\left[
\abs{f(s,x,a)-f(s,\tilde{x},a)}+
\abs{\ell(s,x,a)-\ell(s,\tilde{x},a)}+
\abs{\lambda(s,x,a)-\lambda(s,\tilde{x},a)}\right]\\
&\qquad\le L_f\sup_{t\in [0,s]}\abs{x(t)-\tilde{x}(t)}\qquad\text{and}\\
& \abs{h(x)-h(\tilde{x})}\le L_f\sup_{t\le T}\abs{x(t)-\tilde{x}(t)}.
\end{align*}
\end{assumption}

\begin{assumption}\label{A:Q}
Let $Q$ satisfy the following conditions.

{\color{black}
(i) For every $B\in\mathcal{B}(\R^d)$,  $(s,x,a)\mapsto Q(s,x,a,B)$ 
is Borel-measurable and non-anticipating, i.e., $Q(s,x,a,B)=Q(s,x(\cdot\wedge s),a,B)$.}



(ii) For every $s\in\R_+$, $x\in\Omega$, and $a\in A$,  $Q(s,x,a,\cdot)\in\mathscr{P}(\R^d)$.

(iii) For every  $s\in\R_+$, $x\in\Omega$, and $a\in A$,   $Q(s,x,a,\{x(s)\})=0$.

(iv) There is a constant $L_Q\ge 0$,  such that, for all $\psi\in C_b([0,T]\times\Omega)$ and $L_\psi\ge 0$, 
the statement
\begin{align}\label{E:4AQ}
\abs{\psi(s,x)-\psi(s,\tilde{x})}\le L_\psi
{\color{black} \sup_{t\in [0,s]}\abs{x(t)-\tilde{x}(t)}}
\text{ for all $(s,x,\tilde{x})\in [0,T]\times\Omega\times\Omega$}
\end{align}
implies
\begin{align*}
&\sup_{a\in A}\Big|\int_{\R^d} \psi(s,x\otimes_s e)\,Q(s,x,a,\dd e)-
\int_{\R^d} \psi(s,x\otimes_s e)\,Q(s,\tilde{x},a,\dd e)
 \Big|\\
  &\qquad\le L_\psi\,L_Q\,\sup_{t\in [0,s]}\abs{x(t)-\tilde{x}(t)}
  \text{ for all $(s,x,\tilde{x},e)\in [0,T]\times\Omega\times\Omega\times \R^d$.}
\end{align*}

{\color{black} (v) For each $s\in\R_+$ and $\eta\in C_b(\R^d)$, the map $(x,a)\mapsto \int_{\R^d} \eta(e)\,Q(s,x,a,\dd e)$,
$\Omega\times A\to\R$, is continuous.}
\end{assumption}

\begin{remark}\label{R:AQ}
By Proposition~7.26 in \cite{BS},  $Q$ is a Borel-measurable stochastic kernel on $\R^d$ given $\R_+\times\Omega\times A$.
{\color{black} By Theorem 97 (b), 147-IV, in 
\cite{DMprobPot}, the maps  $(s,x)\mapsto Q(s,x,a,B)$, $(a,B)\in A\times\mathcal{B}(\R^d)$,
are $\F^0$-optional.}
\end{remark}

{\color{black} We provide now two elementary examples concerning Assumption~\ref{A:Q} (iv).
\begin{example}
Let $d=1$.

(i) Assume that
\begin{align*}
Q(s,x,a,\dd e)=\frac{1}{2} [\delta_{x(s)-1}(\dd e)+\delta_{x(s)+1}(\dd e)].
\end{align*}
Consider a function $\psi\in C_b([0,T]\times\Omega{\color{black})}$ 
and a constant $L_\psi\ge 0$ such that \eqref{E:4AQ} holds.
Then, for every $(s,x,\tilde{x})\in [0,T]\times\Omega\times\Omega$, we have
\begin{align*}
&\Big|\int_{\R^d} \psi(s,x\otimes_s e)\,Q(s,x,a,\dd e)-
\int_{\R^d} \psi(s,x\otimes_s e)\,Q(s,\tilde{x},a,\dd e)
 \Big|\\
 &=\frac{1}{2}\Big|
 \psi(s,x\otimes_s (x(s)-1))+ \psi(s,x\otimes_s (x(s)+1))
 \\&\qquad\qquad - \psi(s,x\otimes_s (\tilde{x}(s)-1))- \psi(s,x\otimes_s (\tilde{x}(s)+1))
 \Big|\\
 &\color{black}\le  L_\psi \abs{x(s)-\tilde{x}(s)}\\
 &\le L_\psi \sup_{t\in [0,s]}\abs{x(t)-\tilde{x}(t)},
\end{align*}
i.e., Assumption~\ref{A:Q} (iv) holds.

(ii) Assume that 
\begin{align*}
Q(s,x,a,\dd e)&=\begin{cases}\frac{1}{2} [\delta_{x(0)-1}(\dd e)+\delta_{x(0)+1}(\dd e)]
&\text{ if $x(s)\not\in\{x(0)-1,x(0)+1\}$,}\\
\delta_{x(0)}(\dd e)
&\text{ if $x(s)\in\{x(0)-1,x(0)+1\}$.}
\end{cases}
\end{align*}
Similarly as in part (i), one can show that Assumption~\ref{A:Q} (iv) holds.
Also note that Assumption~\ref{A:Q} (iii) holds as well.
\end{example}}

\subsection{The flow and related notation}\label{S:Flow}
Given $(s,x,\alpha)\in\R_+\times\Omega\in\mathcal{A}$, denote by $\phi^{s,x,\alpha}=\phi$ the solution of
\begin{equation}\label{E:flow}
\begin{split}
\phi^\prime(t)&=f(t,\phi,\alpha(t))\text{ a.e.~on $(s,\infty)$,}\\
\phi(t)&=x(t)\text{ on $[0,s]$.}
\end{split}
\end{equation}
Establishing existence and uniqueness  for the initial value problem~\eqref{E:flow}
is standard given Assumption~\ref{A:data:controlled}
{\color{black}(see, e.g., Theorem~16.3.11 in \cite{CohenElliott} for a fairly general result,
which covers our case)}. 
Note that $\phi^{s,x,\alpha}\in\Omega$ and
$\phi^{s,x,\alpha}\vert_{[s,\infty)}\in C([s,\infty),\R^d)$.

The following result is  implicitly needed throughout most of what follows.
Its proof can be found in the appendix, section \ref{S:A1}.
\begin{lemma}\label{L:phi:meas}
The map $(s,x,\alpha)\mapsto \phi^{s,x,\alpha}$, $\R_+\times\Omega\times\mathcal{A}\to\Omega$,
is measurable from $\mathcal{B}(\R_+)\otimes \cF^0\otimes\mathcal{B}(\mathcal{A})$ to $\cF^0$.
\end{lemma}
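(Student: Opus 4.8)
The plan is to prove measurability of $(s,x,\alpha)\mapsto\phi^{s,x,\alpha}$ by exhibiting $\phi^{s,x,\alpha}$ as a pointwise limit of Picard iterates, each of which is measurable, and then invoking closure of measurable maps under pointwise limits. First I would fix a suitable complete metric on $\Omega=D(\R_+,E)$ that is compatible with the pseudo-metric structure of \eqref{E:SS:CanonicalPathSpace:PseudoMetric} on finite time horizons, so that measurability into $(\Omega,\cF^0)$ can be checked on the generating family of coordinate (evaluation) maps $x\mapsto x(r)$, $r\in\R_+$; indeed $\cF^0$ is exactly the $\sigma$-field generated by these evaluations, so it suffices to show that for each fixed $r\ge 0$ the map $(s,x,\alpha)\mapsto \phi^{s,x,\alpha}(r)$ is $\mathcal{B}(\R_+)\otimes\cF^0\otimes\mathcal{B}(\mathcal{A})$-measurable into $E$. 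Note that on $[0,s]$ one has $\phi^{s,x,\alpha}(r)=x(r\wedge s)$ (or $x(r)$ for $r\le s$), which is manifestly jointly measurable, so the content is the behaviour on $(s,\infty)$.

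The main step is the Picard scheme. Define $\phi^{(0)}(t):=x(t\wedge s)$ and, recursively,
\begin{align*}
\phi^{(k+1)}(t):=x(t)\,\bfone_{[0,s]}(t)+\Big(x(s)+\int_s^t f(\sigma,\phi^{(k)},\alpha(\sigma))\,\dd\sigma\Big)\bfone_{(s,\infty)}(t).
\end{align*}
By Assumption~\ref{A:data:controlled}(iii) the growth bound $\sup_{a}\abs{f(\sigma,y,a)}\le C_f(1+\sup_{\tau\le\sigma}\abs{y(\tau)})$ together with Gr\"onwall gives, on each compact $[0,N]$, a uniform-in-$k$ a priori bound on $\sup_{t\le N}\abs{\phi^{(k)}(t)}$ depending only on $s$, $\sup_{\tau\le s}\abs{x(\tau)}$, $C_f$, and $N$; the Lipschitz bound in Assumption~\ref{A:data:controlled}(iv), $\sup_a\abs{f(\sigma,y,a)-f(\sigma,\tilde y,a)}\le L_f\sup_{\tau\le\sigma}\abs{y(\tau)-\tilde y(\tau)}$, then yields the standard contraction estimate
\begin{align*}
\sup_{t\le N}\abs{\phi^{(k+1)}(t)-\phi^{(k)}(t)}\le \frac{(L_f N)^{k}}{k!}\,\sup_{t\le N}\abs{\phi^{(1)}(t)-\phi^{(0)}(t)},
\end{align*}
so that $\phi^{(k)}\to\phi^{s,x,\alpha}$ uniformly on compacts, for every $(s,x,\alpha)$; in particular $\phi^{(k)}(r)\to\phi^{s,x,\alpha}(r)$ in $E$ for each fixed $r$. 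It then remains to check by induction on $k$ that each map $(s,x,\alpha)\mapsto\phi^{(k)}(r)$ is $\mathcal{B}(\R_+)\otimes\cF^0\otimes\mathcal{B}(\mathcal{A})$-measurable, which propagates to the limit $\phi^{s,x,\alpha}(r)$, and hence, ranging over $r$, gives the claim.

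The step I expect to be the main obstacle is the measurability of the integral functional in the induction, namely that $(s,x,\alpha)\mapsto\int_s^t f(\sigma,\phi^{(k)}_{(s,x,\alpha)},\alpha(\sigma))\,\dd\sigma$ is jointly measurable. Here one must combine: (a) the joint measurability of $(\sigma,x,a)\mapsto f(\sigma,x,a)$, which follows from Assumption~\ref{A:data:controlled}(i)--(ii) via a Carath\'eodory-function argument (separate continuity in $(x,a)$ for a.e.\ $\sigma$ plus measurability in $\sigma$ gives joint measurability, using separability of $\Omega$ in the relevant seminorm and of $A$); (b) the inductive joint measurability of $(s,x,\alpha)\mapsto\phi^{(k)}_{(s,x,\alpha)}(\sigma)$ together with the fact that $\phi^{(k)}$ takes values in the Skorokhod space, so that $(\sigma,s,x,\alpha)\mapsto\phi^{(k)}_{(s,x,\alpha)}$ is jointly measurable as an $\Omega$-valued map; (c) the evaluation map $(\alpha,\sigma)\mapsto\alpha(\sigma)$, $\mathcal{A}\times\R_+\to A$, being jointly measurable — this is precisely the kind of statement guaranteed by the Borel-space structure of $\mathcal{A}$ recorded in Remark~\ref{R:AAisBorelSpace} (cf.\ Lemma~1 in \cite{Yushkevich80}). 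Granting (a)--(c), the integrand $(\sigma,s,x,\alpha)\mapsto f(\sigma,\phi^{(k)}_{(s,x,\alpha)},\alpha(\sigma))$ is jointly measurable, and joint measurability of the integral with variable lower limit $s$ follows from Fubini--Tonelli applied to $\bfone_{(s,t)}(\sigma)$ times the integrand (the growth bound furnishes the needed local integrability). Assembling the pieces, $\phi^{(k+1)}$ is measurable, closing the induction, and the uniform-on-compacts convergence promotes this to the desired measurability of $(s,x,\alpha)\mapsto\phi^{s,x,\alpha}$ from $\mathcal{B}(\R_+)\otimes\cF^0\otimes\mathcal{B}(\mathcal{A})$ to $\cF^0$.
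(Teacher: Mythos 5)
Your proposal follows essentially the same strategy as the paper: Picard iterates $\phi^{(k)}$ converging uniformly on compacts to $\phi^{s,x,\alpha}$, joint measurability of each iterate established by induction, with the measurability of the driving integral as the crux, and finally a reduction to evaluation maps / right-continuity to get $\cF^0$-measurability of the $\Omega$-valued limit. This matches the structure of Lemmas~\ref{Claim1}--\ref{Claim3} and the concluding argument via Proposition~1.13 of \cite{KaratzasShreve}.

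There is, however, a genuine gap in your step~(a). You justify joint measurability of $(\sigma,x,a)\mapsto f(\sigma,x,a)$ by a Carath\'eodory-function argument and invoke ``separability of $\Omega$ in the relevant seminorm.'' But $\Omega=D(\R_+,E)$ is \emph{not} separable under the sup seminorm $x\mapsto\sup_{r\le t}\abs{x(r\wedge t)}$ (e.g.\ the family $\{\bfone_{[\beta,\infty)}:\beta\in[0,t]\}$ is uncountable with pairwise distance~$1$). The standard Carath\'eodory theorem ---separate measurability in one variable plus continuity in the other implies joint measurability--- crucially requires separability of the continuity variable's space, and this is exactly what fails here. Restricting to the range of the iterates does not help, since $\phi^{(k)}$ equals $x(\cdot\wedge s)$ on the initial segment, so the range is again all of the non-separable $D$. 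The paper circumvents this: the footnote in the proof of Lemma~\ref{Claim1}(ii$'$) first treats product-form $f^j(r,x,a)=f^j_1(x(\cdot\wedge(t+s)))\,f^j_2(r,a)$ with $f^j_1$ being $\cF^0$-measurable and $f^j_2$ jointly Borel, for which the joint measurability of the integral is immediate, and then extends by a monotone-class argument (which only needs the product functions to generate $\mathcal{B}(\R_+)\otimes\cF^0\otimes\mathcal{B}(A)$). To repair your proof, you should replace the Carath\'eodory step by this monotone-class argument, or otherwise establish joint measurability of $f$ by a mechanism not relying on separability of $\Omega$ in the sup seminorm.

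Two smaller remarks. First, your contraction estimate $\frac{(L_f N)^k}{k!}$ is fine in spirit but you should keep track of where Gr\"onwall enters the a priori bound on the iterates, since $f$ has only linear growth; this is routine. Second, your use of evaluation maps to reduce $\cF^0$-measurability of the $\Omega$-valued map to pointwise measurability of $(s,x,\alpha)\mapsto\phi^{s,x,\alpha}(r)$ is correct and equivalent to the paper's route via right-continuity in $r$ and the jointly-measurable-process lemma in \cite{KaratzasShreve}.
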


We shall also  make use of the survival function $F^{s,x,\alpha}$ defined by 
\begin{align*}
F^{s,x,\alpha}(t)&:=\bfone_{[s,\infty)}(t)\,\exp\Big(
-\int_s^t \lambda(r,\phi^{s,x,\alpha},\alpha(r))\,\dd r
\Big)+\bfone_{[0,s)}(t),\quad t\in\R_+.
\end{align*}

For each $(s,x,\alpha)\in \R_+\times\Omega\times\mathcal{A}$ and $t\in [s,\infty)$, also define 
\begin{align*}
\lambda^{s,x,\alpha}(t)&:=\lambda(t,\phi^{s,x,\alpha},\alpha(t)),\\
\chi^{s,x,\alpha}(t)&:=\exp\left(
-\int_s^t \lambda^{s,x,\alpha}(r)\,\dd r
\right),\\
\ell^{s,x,\alpha}(t)&:=\ell(t,\phi^{s,x,\alpha},\alpha(t)),\\
Q^{s,x,\alpha}(t,\dd e)&:=Q(t,\phi^{s,x,\alpha},\alpha(t),\dd e).
\end{align*}
If more convenient, then we  write
\begin{align*}
Q(t,\dd e\vert s,x,\alpha):=Q^{s,x,\alpha}(t,\dd e),\text{ etc.}
\end{align*}

\begin{remark}\label{R:coeff:meas}
Following  Lemma~\ref{L:phi:meas} and the arguments in the proofs in section~\ref{S:A1},
one can deduce that
the mappings  
\begin{align*}
&(s,x,\alpha,t)\mapsto  \lambda^{s,x,\alpha}(t), \R_+\times\Omega\times\mathcal{A}\times\R_+\to\R_+,\\
&(s,x,\alpha,t)\mapsto \chi^{s,x,\alpha}(t), \R_+\times\Omega\times\mathcal{A}\times\R_+\to\R_+,\\
&(s,x,\alpha,t)\mapsto \ell^{s,x,\alpha}(t), \R_+\times\Omega\times\mathcal{A}\times\R_+\to\R_+,\\
& (s,x,\alpha,t)\mapsto Q^{s,x,\alpha}(t,B), \R_+\times\Omega\times\mathcal{A}\times\R_+\to\R_+, B\in\mathcal{B}(\R^d),
\end{align*}
are $\mathcal{B}(\R_+)\otimes\cF^0\otimes\mathcal{B}(\mathcal{A})\otimes\mathcal{B}(\R_+)$-measurable. 

{\color{black}
To this end, one can start by assuming
 first that $\lambda=\lambda(t,x,a)$ is of the form $\lambda_1(t)\,\lambda_2(x(\cdot\wedge t))\,\lambda_3(a)$,
where $\lambda_1$ is (Borel) measurable and $\lambda_2$ as well as $\lambda_3$ are continuous.
Then, by Lemma~\ref{L:phi:meas}, $(s,x,\alpha,t)\mapsto \lambda(t,\phi^{s,x,\alpha},\alpha(t))$ is clearly
$\mathcal{B}(\R_+)\otimes\cF^0\otimes \mathcal{B}(\mathcal{A})\otimes\mathcal{B}(\R_+)$-measurable.
It remains to apply a monotone-class argument.
}
\end{remark}

\subsection{The continuous-time optimal control problem} 
 {\color{black} First, let us define  \textcolor{blue}{recursively} the
  ``canonical" piecewise deterministic process}   $X^{s,x}:\R_+\times\check{\Omega}\to \R^d\cup\{\Delta\}$
  {\color{black} starting at $(s,x)\in (0,\infty)\times\Omega$}   by 
\begin{align}\label{E:ContrState}
X^{s,x}(t,\check{\omega}):=\begin{cases}
\Delta, &t_0=\infty,\\
\phi^{s,x,\alpha_0}(t), &0\le t<t_1\text{ and } t_0<\infty,\\
\phi^{t_n,X^{s,x}(\check{\omega})\otimes_{t_n}\,e_n,\alpha_n}(t),
& t_n\le t<t_{n+1}\text{ and } t_0<\infty,
\end{cases}
\end{align}
for each $\check{\omega}=(t_n,e_n,\alpha_n)_{n\in\N_0}\in\check{\Omega}$.

\begin{remark}
Keep in mind that, by Remark~\ref{R:concatenation}, the  $X^{s,x}$-term on the right-hand
side of \eqref{E:ContrState} depends only on its values on $[0,t_n]$,
 i.e., our definition of $X^{s,x}$ is not circular.
Note that ${X}^{s,x}(\check{\omega})\in\Omega$ for each $\check{\omega}\in\check{\Omega}$ with $t_0<\infty$.
Also note that in \eqref{E:ContrState}, $t_{n+1}=\infty$ for some $n\in\N_0$ is possible and if, in addition,
$t_n<\infty$, then $X^{s,x}(t,\check{\omega})=\phi^{t_n,X^{s,x}(\check{\omega})\otimes_{t_n}\,e_n,\alpha_n}(t)$ for all $t\in [t_n,\infty)$. In particular, $X^{s,x}(t,(t_0,e_0,\alpha_0;\infty,\Delta,\Delta^\prime;
\infty,\Delta,\Delta^\prime;\ldots))=\phi^{s,x}(t)$ if $t_0<\infty$.
\end{remark}

\begin{remark}\label{R:Xsx}
The process $X^{s,x}$ is $\check{\F}$-adapted. This can be shown via mathematical induction
together with Proposition~4.2.1~(b)~(iii) in \cite{Jacobsen},
Theorem 96 (d), 146-IV, in 
\cite{DMprobPot}, 
and Lemma~\ref{L:phi:meas}.
Since in addition $X^{s,x}$ has c\`adl\`ag paths, it is also $\check{\F}$-progressively measurable.
\end{remark}
We also consider the following ``restrictions" of $X^{s,x}$. Given $n\in\N_0$,
define $X^{s,x}_n:\R_+\times\check{\Omega}_{n+1}\to \R^d\cup\{\Delta\}$ by
\begin{align}\label{E:Xn}
X^{s,x}_n(t,\check{\omega}_{n+1}):=
X^{s,x}(t,(\check{\omega}_{n+1};\infty,\Delta,\Delta^\prime;
\infty,\Delta,\Delta^\prime;\ldots)),
\end{align}
where $\check{\omega}_{n+1}=(t_j,e_j,\alpha_j)_{j=0}^n$.

{\color{black} Next, we introduce classes of policies needed to formulate our optimal control problem. Note
that we consider  randomized as well as non-randomized policies. We use randomized policies mainly
for technical reasons: Doing so will allow us to directly apply results from discrete-time stochastic optimal control in \cite{BS}.
We will show that for our problem optimization over randomized policies and optimization over non-randomized policies
are equivalent.
%
}

\begin{definition}\label{D:randomizedPolicy}
Denote by $\mathbb{A}^\prime$ the space of all policies $\mathbf{a}=(\mathbf{a}_0,\mathbf{a}_1,\ldots)$
such that each 
$\mathbf{a}_n=\mathbf{a}_n(\dd \alpha_n\vert (t_0,e_0),\ldots, (t_n,e_n); \alpha_0,\ldots,\alpha_{n-1})$, $n\in\N_0$, is a 
universally measurable 
stochastic kernel
on $\mathcal{A}\cup\{\Delta^\prime\}$ given $((\R_+\times \R^d)\cup\{(\infty,\Delta)\})^{n+1}\times(\mathcal{A}\cup\{\Delta^\prime\})^n$.

Denote by $\mathbb{A}$ the space of all non-randomized policies 
$\mathbf{a}=(\mathbf{a}_0,\mathbf{a}_1,\ldots)\in\mathbb{A}^\prime$, i.e., 
for each $n\in\N_0$, $(t_0,e_0)$, $\ldots$, $(t_n,e_n)\in
(\R_+\times \R^d)\cup\{\infty,\Delta\}$,
 and
$\alpha_0$, $\ldots$, $\alpha_{n-1}\in\mathcal{A}\cup\{\Delta^\prime\}$,  
$\mathbf{a}_n(\dd \alpha_n\vert (t_0,e_0),\ldots, (t_n,e_n); \alpha_0,\ldots,\alpha_{n-1})$ is
a Dirac measure.
\end{definition}

\begin{remark}
Our (randomized) policies should not be confused with relaxed controls, which, in our context, would be functions
from $[0,T]$ to $\mathscr{P}(A)$. 
\end{remark}

{\color{black} Given an initial condition and a specified policy, we are now in the position to define 
the corresponding probability measure.}

\begin{definition}\label{D:contr:checkP}
Let $(s,x)\in (0,T)\times\Omega$ and $\mathbf{a}\in\mathbb{A}^\prime$. 
We  construct  a probability measure $\check{\Prob}^{s,x,\mathbf{a}}$ on
 $(\check{\Omega},\check{\cF})$
with  corresponding
expected value $\check{\Mean}^{s,x,\mathbf{a}}$ 
 recursively by defining its marginals  
 $\check{\Prob}^{s,x,\mathbf{a}}_n$ on $(\check{\Omega}_n,\check{\cF}_{n,\infty})$ with corresponding
expected values $\check{\Mean}^{s,x,\mathbf{a}}_n$, $n\in\N$, as follows
 {\color{black} (cf.~section~3.2 of \cite{Jacobsen})}:
\begin{equation}\label{E:contr:MPP:construction}
\begin{split}
&\check{\Mean}_1^{s,x,\mathbf{a}}[
\psi_0(T_0,E_0,\mathcal{A}_0)
]:=\int_0^\infty \int_E
\int_{\mathcal{A}} \psi_0(t_0,e_0,\alpha_0)\,\mathbf{a}_0(\dd \alpha_0)\,\delta_{x(s)}(\dd e_0)\,
\delta_s(\dd t_0),\\
&\check{\Mean}_2^{s,x,\mathbf{a}}[\bfone_{\{T_1<\infty\}}
\psi_1(T_0,E_0,\mathcal{A}_0;
T_1,E_1,\mathcal{A}_1)]\\ &\,:=
-\int_0^\infty\int_E\int_{\mathcal{A}}\int_0^\infty \int_E \int_{\mathcal{A}}
\psi_1(t_0,e_0,\alpha_0;t_1,e_1,\alpha_1)\\
&\,\qquad\qquad\qquad \mathbf{a}_1(\dd \alpha_1\vert (t_0,e_0),(t_1,e_1);\alpha_0)\\
&\,\qquad\qquad\qquad 
Q^{s,x,\alpha_0}(t_1,\dd e_1)\,\dd F^{s,x,\alpha_0}(t_1)\,
\mathbf{a}_0(\dd \alpha_0)\,\delta_{x(s)}(\dd e_0)\,\delta_s(\dd t_0),\\
&\check{\Mean}_{n+2}^{s,x,\mathbf{a}}[\bfone_{\{T_{n+1}<\infty\}}\psi_{n+1}
(T_0,E_0,\mathcal{A}_0;
\ldots;
T_{n+1},E_{n+1},\mathcal{A}_{n+1})]
\\&\,:=
- \int_{\check{\Omega}_{n+1}}
\Biggl\{
\int_0^\infty\int_E\int_{\mathcal{A}}
\psi_{n+1}(\check{\omega}_{n+1};
t_{n+1},e_{n+1},\alpha_{n+1})\\
&\qquad\qquad
\mathbf{a}_{n+1}(\dd \alpha_{n+1}\vert (t_0,e_0),\ldots,
(t_{n+1},e_{n+1}); \alpha_0,\ldots,\alpha_n)
\\ &\qquad\qquad
Q(t_{n+1},\dd e_{n+1}\vert
t_n,X^{s,x}_n(\check{\omega}_{n+1}),
\alpha_n)\,
\dd 
F(t_{n+1}\vert 
t_n,X^{s,x}_n(\check{\omega}_{n+1}),
\alpha_n
)\Biggr\}\\
&\qquad\qquad\check{\Prob}^{s,x,\mathbf{a}}_{n+1}(\dd\check{\omega}_{n+1}),
\end{split}
\end{equation}
where  $\psi_n: (\R_+\times \R^d)^{n+1}\times\mathcal{A}^{n+1}\to\R$, $n\in\N_0$, are Borel-measurable and bounded
and the notation $\check{\omega}_{n+1}=(t_j,e_j,\alpha_j)_{j=0}^n$ is used.

{\color{black} Thanks to the Kolmogorov extension theorem, the above construction really provides
a unique probability measure (besides section~3.2 in \cite{Jacobsen}, we refer also to Proposition~7.45 and its proof in \cite{BS}
for a detailed treatment).}
\end{definition}

\begin{definition}\label{D:valuefunctions}
The expected cost $J:(0,T]\times\Omega\times\mathbb{A}^\prime\to\R$ and
 the value functions  $V^\prime:(0,T]\times\Omega\to\R$
and $V:(0,T]\times\Omega\to\R$ are defined by
\begin{equation}\label{E:Cost:and:Value}
\begin{split}
J(s,x,\mathbf{a})&:=
\check{\Mean}^{s,x,\mathbf{a}}\Biggl[\int_s^T\ell(t,X^{s,x},\bar{\alpha}(t))\,\dd t+h(X^{s,x})\Biggr],\\
V^\prime(s,x)&:=\inf_{\mathbf{a}\in\mathbb{A}^\prime} J(s,x,\mathbf{a}),\\
V(s,x)&:=\inf_{\mathbf{a}\in\mathbb{A}} J(s,x,\mathbf{a}).
\end{split}
\end{equation}
Here, $\bar{\alpha}:\R_+\times\check{\Omega}\to A\cup\{\Delta^\prime\}$ is given by
\begin{align*}
\bar{\alpha}(t):=\sum_{n=0}^\infty \bfone_{(T_n,T_{n+1}]}(t)\,\mathcal{A}_n(t)+\bfone_{\left(
\cup_{n=0}^\infty (T_n,T_{n+1}]
\right)^c}(t)\,a^\circ
\end{align*}
(recall $a^\circ$ from the beginning of section~\ref{SS:Canon:Marked}).
\end{definition}

\subsection{A related discrete-time optimal control problem and the dynamic programming principle}\label{SS:value}
{\color{black} We will construct a  
path-dependent discrete-time inf\-in\-ite-horizon decision model, which is closely related to our continuous-time problem.
This will enable us to establish a dynamic programming principle for the value functions
$V$ and $V^\prime$ of our continuous-time problem (see Definition~\ref{D:valuefunctions}).
(Our approach is inspired by  similar
 work in the Markovian case, see, e.g., section~3 of  \cite{BR10PDMP}, section~8.2 in \cite{BR11book}, and \cite{RTT17PDMP}.)

In this  {\color{black} subsection,} we  fix $(s,x)\in (0,T]\times\Omega$. }

{\color{black} We consider a path-dependent infinite-horizon model 
with the following data
(cf.~Definitions~8.1 and 9.1  in \cite{BS} for the non-path-dependent case):
\begin{itemize} 
\item State space: $S=(\R_+\times \R^d)\cup\{(\infty,\Delta)\}$.
\item  Control space: $U=\mathcal{A}\cup\{\Delta^\prime\}$.
\item Disturbance space: $S$.
\item Disturbance kernel 
\begin{align*}
\vec{Q}_n(\dd t\,\dd e\vert (t_0,e_0),\ldots,(t_n,e_n);\alpha_0,\ldots,\alpha_n)\text{  at stage $n$, $n\in\N_0$,}
\end{align*}
is a Borel-measurable  stochastic kernel on $S$  given
  $S^{n+1}\times U^{n+1}$ defined
by
\begin{align*}
&\int_S \bfone_{[0,\infty)}(t)\,\psi(t,e)\, 
\vec{Q}_n(\dd t\,\dd e\vert
(t_0,e_0),\ldots,(t_n,e_n);\alpha_0,\ldots,\alpha_n)\\&:=\begin{cases}
&-\int_{\R_+\times E} 
\psi(t,e)\,
Q(t,\dd e\vert t_n,X^{s,x}_n(\check{\omega}_{n+1}),\alpha_n)\,
\dd F(t\vert  t_n,X^{s,x}_n(\check{\omega}_{n+1}),\alpha_n)\\
&\phantom{\psi(\infty,\Delta)}\qquad\text{if $\check{\omega}_{n+1}=(t_j,e_j,\alpha_j)_{j=0}^n\in\check{\Omega}_{n+1}$,}\\
&\psi(\infty,\Delta)\qquad\text{otherwise,}
\end{cases}
\end{align*}
for every bounded Borel-measurable function $\psi:S\to\R$. 
\item Cost function
\begin{align*}
g_n:S^{n+1}\times U^{n+1}\to\R_+\cup\{\infty\} \quad \textup{at stage $n$, $n\in\N_0$,}
\end{align*}
is a lower semi-analytic function defined by
\begin{equation*}\label{3E:vecln}
g_n((t_0,e_0),\ldots,(t_n,e_n);\alpha_0,\ldots,\alpha_n)
:=\begin{cases} \vec{\ell}_n((t_j,e_j,\alpha_j)_{j=0}^n)\text{ if $(t_j,e_j,\alpha_j)_{j=0}^n\in\check{\Omega}_{n+1}$,}\\
\infty\text{ otherwise,}
\end{cases}
\end{equation*}
where  $\vec{\ell}_n:\check{\Omega}_{n+1} \to\R_+$ is defined by 
\begin{equation}\label{E:vecln}
\begin{split}
\vec{\ell}_n(\check{\omega}_{n+1}) 
&:=\int_{t_n\wedge T}^T \chi^{t_n,X^{s,x}_n(\check{\omega}_{n+1}),\alpha_n}(t)\,
\ell(t,X^{s,x}_n(\check{\omega}_{n+1}), \alpha_n(t))\,\dd t\\
&\qquad +\bfone_{[0,T]}(t_n)\, \chi^{t_n,X^{s,x}_n(\check{\omega}_{n+1}),\alpha_n}(T)\,h(X^{s,x}_n(\check{\omega}_{n+1})).
\end{split}
\end{equation}
Here, $\check{\omega}_{n+1}= (t_0,e_0,\alpha_0;\ldots;t_n,e_n,\alpha_n)$.
\end{itemize}

\begin{remark}
Borel-measurability of  $\vec{Q}_n$ and lower semi-analyticity of $g_n$, 
$n\in\N_0$,
follow from Remarks~\ref{R:AQ}, \ref{R:coeff:meas}, and \ref{R:Xsx}. Keep in mind that  $(s,x)$ is fixed here.
\end{remark}

Depending on the starting stage $k\in\N_0$, 
we use as policy space the set $\mathbb{A}^k$ of all
policies  $\mathbf{a}^k=(\mathbf{a}^k_k,\mathbf{a}^k_{k+1},\ldots)$ such that
each 
\begin{align*}
 \mathbf{a}^k_j=\mathbf{a}^k_j(\dd\alpha_j\vert (t_0,e_0),\ldots,(t_j,e_j);\alpha_0,\ldots,\alpha_{j-1}),\quad
j\in\{k,k+1,\ldots\},
\end{align*}
 is a universally measurable stochastic kernel on $U$ given $S^{j+1}\times U^j$
 (\cite[section~10.1]{BS}).
 
\begin{remark}
Note that $\mathbb{A}^0$ coincides with $\mathbb{A}^\prime$ from Definition~\ref{D:randomizedPolicy}.
\end{remark}

Given $k\in\N_0$, a policy $\mathbf{a}^k=(\mathbf{a}^k_k,\mathbf{a}^k_{k+1},\ldots)\in\mathbb{A}^k$, and
a probability measure $p^k$ on $S^{k+1}\times U^k$,  define marginals $r^k_N(\mathbf{a}^k,p^k)$ on $(S\times U)^N$,
$N\in\{k+1,k+2,\ldots\}$ by
\begin{equation}\label{E:new:marginals}
\begin{split}
&\int\psi\,\dd r^k_N(\mathbf{a}^k,p^k)\\
&:=\int_{S^{k+1}\times U^k}\int_U\int_S\cdots\int_S\int_U 
\psi((t_0,e_0),\ldots,(t_{N-1},e_{N-1});\alpha_0,\ldots,\alpha_{N-1})\\
&\qquad\qquad \mathbf{a}^k_{N-1}(\dd \alpha_{N-1}\vert
 (t_0,e_0),\ldots,(t_{N-1},e_{N-1});\alpha_0,\ldots,\alpha_{N-2}))\\
&\qquad\qquad \vec{Q}_{N-1}(\dd t_{N-1}\,\dd e_{N-1}\vert
(t_0,e_0),\ldots,(t_{N-2},e_{N-2});\alpha_0,\ldots,\alpha_{N-2})\cdots \\
&\qquad\qquad\vec{Q}_{k}(\dd t_{k+1}\,\dd e_{k+1}\vert
(t_0,e_0),\ldots,(t_{k},e_{k});\alpha_0,\ldots,\alpha_{k})\\
&\qquad\qquad \mathbf{a}^k_k(\dd \alpha_k\vert
 (t_0,e_0),\ldots,(t_k,e_k);\alpha_0,\ldots,\alpha_{k-1})\\
&\qquad\qquad p^k(\dd t_0\,\dd e_0\ldots \dd t_{k}\,\dd e_{k}\,\dd \alpha_0\ldots\dd \alpha_{k-1})
\end{split}
\end{equation}
for each bounded universally measurable function $\psi:S^N\times U^N\to\R$.
}

{\color{black} For each $k\in\N_0$, $\mathbf{a}^k\in\mathbb{A}^k$, $(t_0,e_0)$, $\ldots$, $(t_k,e_k)\in S$,
and $\alpha_0$, $\ldots$, $\alpha_{k-1}\in U$, define the corresponding cost
\begin{equation}\label{0E:OriginalModel:Cost:kOriginating}
\begin{split}
&J_{\mathbf{a}^k}(k;(t_0,e_0),\ldots,(t_k,e_k);\alpha_0,\ldots,\alpha_{k-1})\\ &:= \sum_{j=k}^\infty \int g_j
\,\dd r_{j+1}^k(\mathbf{a}^k,\delta_{((t_0,e_0),\ldots,(t_k,e_k);\alpha_0,\ldots,\alpha_{k-1})}).
\end{split}
\end{equation}

For each $k\in\N_0$,  $(t_0,e_0)$, $\ldots$, $(t_k,e_k)\in S$,
and $\alpha_0$, $\ldots$, $\alpha_{k-1}\in U$, define the  corresponding optimal cost
\begin{equation}\label{E:new:optimal:cost}
\begin{split}
&J^\ast(k;(t_0,e_0),\ldots,(t_k,e_k);\alpha_0,\ldots,\alpha_{k-1})\\
&:=\inf_{\mathbf{a}^k\in\mathbb{A}^k} J_{\mathbf{a}^k}(k;(t_0,e_0),\ldots,(t_k,e_k);\alpha_0,\ldots,\alpha_{k-1}).
\end{split}
\end{equation}

Next, we show that, for appropriate values, the just defined optimal cost function $J^\ast$ coincides 
with the value function $V^\prime$ defined by \eqref{E:Cost:and:Value}. 

\begin{theorem}\label{0L:value:DPP}
Fix $(s,x)\in (0,T)\times\Omega$ and
$(t,e,\alpha)\in (s,T]\times \R^d\times\mathcal{A}$.  Then
\begin{align}\label{0E:T:value:DPP:VJ}
V^\prime(s,x)&=J^\ast(0;s,x(s)),\\ \label{0E:L:value:DPP}
V^\prime(t,\phi^{s,x,\alpha}\otimes_t e)&= J^\ast(1;(s,x(s)),(t,e);\alpha).
\end{align}
\end{theorem}
\begin{proof}
(i) First, we establish \eqref{0E:T:value:DPP:VJ}. To this end, let $\mathbf{a}\in\mathbb{A}^\prime$. Then, by \eqref{E:Cost:and:Value},
\begin{align*}
J(s,x,\mathbf{a})&=
\sum_{n=0}^\infty \check{\Mean}^{s,x,\mathbf{a}}\Bigl[
\int_{T_n}^{T_{n+1}} \bfone_{[0,T]}(\tau)\,\ell(\tau,X^{s,x},\mathcal{A}_n(\tau))\,\dd \tau+
\bfone_{[T_n,T_{n+1})} (T)\,h(X^{s,x})
\Bigr].
\end{align*}
Since, by Assumption~\ref{A:data:controlled}, $h(x)=h(x(\cdot\wedge T))$ for every $x\in\Omega$, 
we have 
\begin{align*}
{\color{black}
\bfone_{(T,\infty]}(T_{n+1})\,h(X^{s,x}(\check{\omega}))=\bfone_{(T,\infty]}(T_{n+1})\,h(X^{s,x}_n(\check{\omega}_{n+1}))
}
\end{align*}
 for 
 every $\check{\omega}=(\check{\omega}_{n+1},(t_j,e_j,\alpha_j)_{j=n+1}^\infty)\in\check{\Omega}$
and every $n\in\N_0$ (to see this, recall \eqref{E:Xn} for the definition of $X^{s,x}_n$ and Remark~\ref{R:Xsx}).
Thus, for each $n\in\N_0$, 
\begin{align*}
&\check{\Mean}^{s,x,\mathbf{a}}\Bigl[
\int_{T_n}^{T_{n+1}} \bfone_{[0,T]}(\tau)
\,\ell(\tau,X^{s,x},\mathcal{A}_n(\tau))\,\dd \tau+
\bfone_{[T_n,T_{n+1})} (T)\,h(X^{s,x})\Bigr]\\
&=\check{\Mean}^{s,x,\mathbf{a}}_{n+2}
\Bigl[
\int_{T_n\wedge T}^T \bfone_{(\tau,\infty]}(T_{n+1})
\ell(\tau,X_n^{s,x},\mathcal{A}_n(\tau))\,\dd \tau+
\bfone_{[0,T]} (T_n)
\bfone_{(T,\infty]}(T_{n+1})\,h(X_n^{s,x})\Bigr]\\
&= \int_{\check{\Omega}_{n+1}}\Big(
\int_{t_n\wedge T}^T \chi^{t_n,X^{s,x}_n(\check{\omega}_{n+1}),\alpha_n}(\tau)\,
\ell(\tau,X^{s,x}_n(\check{\omega}_{n+1}),a_n(\tau))\,\dd \tau\\
&\qquad\qquad\qquad
+ \bfone_{[0,T]}(t_n)\,
\chi^{t_n,X^{s,x}_n(\check{\omega}_{n+1}),\alpha_n}(T)\,h(X^{s,x}_n(\check{\omega}_{n+1}))\Big)\,
\check{\Prob}^{s,x,\mathbf{a}}_{n+1}(\dd\check{\omega}_{n+1}).
\end{align*}
Thus, recalling the definition of $\vec{\ell}_n$ in \eqref{E:vecln}, we have
\begin{align}\label{0E:Jsxa=sumln}
J(s,x,\mathbf{a})
=\sum_{n=0}^\infty  \check{\Mean}_{n+1}^{s,x,\mathbf{a}} \left[\vec{\ell}_n\right].
\end{align}
By the definitions of the marginals $r^k_N(\mathbf{a}^k,p)$ in \eqref{E:new:marginals}
and $\check{\Prob}^{s,x,\mathbf{a}}_n$ in \eqref{E:contr:MPP:construction},
\begin{align*}
\int g_0\,\dd r^0_1\left(\mathbf{a},\delta_{(s,x(s)}\right)&=
\int_{S\setminus(\{0\}\times E)} \int_U  \vec{\ell}_0(t_0,e_0,\alpha_0)\,\mathbf{a}_0(\dd  \alpha_0)\,\delta_{(s,x(s))}(\dd t_0\,\dd e_0)\\
&\qquad +\infty.\delta_{(s,x(s))}(\{0\}\times E)\qquad\text{(note that $s>0$)}\\
&=\check{\Mean}^{s,x,\mathbf{a}}_1\left[\vec{\ell}_0\right],\\
\int g_1\,\dd r^0_2\left(\mathbf{a},\delta_{(s,x(s))}\right)&=
\int_S\int_U\int_S\int_U g_1((t_0,e_0),(t_1,e_1);\alpha_0,\alpha_1)\\
&\qquad\mathbf{a}_1(\dd\alpha_1\vert (t_0,e_0),(t_1,e_1);\alpha_0)\,
\vec{Q}_0(\dd t_1\,\dd e_1\vert (t_0,e_0),\alpha_0)\\
&\qquad \mathbf{a}_0(\dd  \alpha_0)\,\delta_{(s,x(s))}(\dd t_0\,\dd e_0)\\
&=\check{\Mean}^{s,\mathbf{a}}_2\left[\vec{\ell}_1\right]\qquad\text{because 
$\left[r^0_2\left(\mathbf{a},\delta_{(s,x(s))}\right)\right](\check{\Omega}_2)=1$,}\\
\int g_N \,\dd r^0_{N+1}\left(\mathbf{a},\delta_{(s,x(s))}\right)&=
\check{\Mean}^{s,x,\mathbf{a}}_{N+1}\left[\vec{\ell}_N\right]\qquad\text{(can be shown by induction)}. 
\end{align*}
Hence, by \eqref{0E:Jsxa=sumln} and \eqref{0E:OriginalModel:Cost:kOriginating},   we have
\begin{align}\label{0E:J:equal:J}
J(s,x,\mathbf{a})=J_{\mathbf{a}}(0;s,x(s)).
\end{align}
Since $\mathbf{a}$ was an arbitrary policy in $\mathbb{A}^\prime=\mathbb{A}^0$,
we can conclude that  
\eqref{0E:T:value:DPP:VJ} holds.

(ii) Now, we prove \eqref{0E:L:value:DPP}.
Let
\begin{align*}
y:=\phi^{s,x,\alpha}\otimes_t  e
\end{align*} 
and note that, for every $\mathbf{a}\in\mathbb{A}^\prime$,
we can deduce in the same way as \eqref{0E:Jsxa=sumln} that
\begin{align}\label{0E:Jtya:sum}
J(t,y, 
\mathbf{a})
=\sum_{n=0}^\infty
\check{\Mean}^{t,y, 
\mathbf{a}}_{n+1}\left[
\vec{\ell^1_n}
\right],
\end{align}
where the functions $\vec{\ell_n^1}:\check{\Omega}_{n+1}\to\R_+$, $n\in\N_0$,
are defined by
\begin{equation}\label{0E:vecln1}
\begin{split}
\vec{\ell_n^1}(\check{\omega}_{n+1})&:=
\int_{t_n\wedge T}^T \chi^{t_n,X^{t,y}_n(\check{\omega}_{n+1}),\alpha_n}(\tau)\,
\ell(t,X^{t,y}_n(\check{\omega}_{n+1}), \alpha_n(\tau))\,\dd \tau\\
&\qquad +\bfone_{[0,T]}(t_n)\, \chi^{t_n,X^{t,y}_n(\check{\omega}_{n+1}),\alpha_n}(T)\,
h(X^{t,y}_n(\check{\omega}_{n+1}))
\end{split}
\end{equation}
for each $\check{\omega}_{n+1}=(t_0,e_0,\alpha_0;\ldots;t_n,e_n,\alpha_n)\in
 \check{\Omega}_{n+1}$.

First note that, for every $\mathbf{a}^1\in\mathbb{A}^1$,
\begin{align*}
\int g_1\,\dd r^1_2(\mathbf{a}^1,\delta_{
((s,x(s)),(t,e);\alpha)
})&=
\int_{S^2\times U}\int_U
g_1((t_0,e_0),(t_1,e_1);\alpha_0,\alpha_1)\\
&\qquad\qquad\mathbf{a}^1_1(\dd\alpha_1\vert (t_0,e_0),(t_1,e_1);\alpha_0)\\
&\qquad\qquad\delta_{
((s,x(s)),(t,e);\alpha)
}(\dd t_0\,\dd e_0\,\dd t_1\,\dd e_1\,\dd \alpha_0)\\
&=\int_U \vec{\ell}_1((s,x(s),\alpha),(t,e,\alpha_1))\\
&\qquad\qquad\mathbf{a}^1_1(\dd\alpha_1\vert (s,x(s)),(t,e);\alpha).
\end{align*}
By the definition of $\vec{\ell}_1$ in \eqref{E:vecln}, the terms
$\vec{\ell}_1((s,x(s),\alpha),(t,e,\alpha_1))$ depend on
\begin{align*}
X_1^{s,x}(s,x(s),\alpha;t,e,\alpha_1)\overset{\text{\eqref{E:ContrState}},\eqref{E:Xn}}{{\color{black}=}} 
\phi^{t,\phi^{s,x,\alpha}\otimes_t\,e,\alpha_1}\overset{\text{\eqref{E:ContrState}},\eqref{E:Xn}}{{\color{black}=}}
X_0^{t,y}(t,y(t),\alpha_1).
\end{align*}
Together with the definition of $\vec{\ell_0^1}$ in \eqref{0E:vecln1}, we obtain
\begin{align*}
&\int g_1\,\dd r^1_2(\mathbf{a}^1,\delta_{
((s,x(s)),(t,e);\alpha)
})\\&=
\int_U \vec{\ell_0^1}(t,y(t),\alpha_1)\,
\mathbf{a}^1_1(\dd\alpha_1\vert (s,x(s)),(t,e);\alpha)\\
&=
\int_U \vec{\ell_0^1}(t,y(t),\alpha_0)\,
\mathbf{a}^1_1(\dd\alpha_0\vert (s,x(s)),(t,e);\alpha)\\
&\overset{\text{\eqref{E:contr:MPP:construction}}}{{\color{black}=}}
\check{\Mean}_1^{t,y,\mathbf{a}}\left[\vec{\ell_0^1}\right]
\end{align*}
with $\mathbf{a}=(\mathbf{a}_0,\mathbf{a}_1,\ldots)\in\mathbb{A}^\prime$ such
that 
\begin{align*}
\mathbf{a}_0(\dd\alpha_0\vert t_0,e_0)=
\mathbf{a}^1_1(\dd\alpha_0\vert (s,x(s)),(t_0,e_0);\alpha).
\end{align*}

Next, let $N\in\{2,3,\ldots\}$ and $\mathbf{a}^1\in\mathbb{A}^1$. Then 
\begin{align*}
&\int g_N\,\dd r^1_{N+1}(\mathbf{a}^1,\delta_{
((s,x(s)),(t,e);\alpha)
})\\&=\int_U\int_S\int_U\cdots\int_S\int_U
g_N((s,x(s)),(t,e),(t_2,e_2),\ldots,(t_N,e_N);\alpha,\alpha_1,\ldots,\alpha_N)\\
&\qquad
\mathbf{a}^1_N(\dd \alpha_N\vert
(s,x(s)),(t,e),(t_2,e_2),\ldots,(t_N,e_N);\alpha,\alpha_1,\ldots,\alpha_{N-1}
)\\
&\qquad \vec{Q}_{N-1}(\dd t_N\,\dd e_N\vert
(s,x(s)),(t,e),(t_2,e_2)\,\ldots,(t_{N-1},e_{N-1});\alpha,\alpha_1,\ldots,\alpha_{N-1}
)\cdots\\
&\qquad\mathbf{a}^1_2(\dd\alpha_2\vert
(s,x(s)),(t,e),(t_2,e_2);\alpha,\alpha_1
)\\
&\qquad \vec{Q}_1(\dd t_2\,\dd e_2\vert (s,x(s)),(t,e);\alpha,\alpha_1)\,
\mathbf{a}^1_1(\dd\alpha_1\vert (s,x(s)),(t,e);\alpha).
\end{align*}
\normalsize
Since (up to an $r^1_{N+1}(\mathbf{a}^1,\delta_{
((s,x(s)),(t,e);\alpha)
})$-null set)
\begin{equation}\label{00E:L:value:DPP}
\begin{split}
&g_N((s,x(s)),(t,e),(t_2,e_2),\ldots,(t_N,e_N);\alpha,\alpha_1,\ldots,\alpha_N)\\
&\qquad =\vec{\ell}_N(
(s,x(s),\alpha),(t,e,\alpha_1),(t_2,e_2,\alpha_2),\ldots,(t_N,e_N,\alpha_N)
)
\end{split}
\end{equation}
and the right-hand side of \eqref{00E:L:value:DPP} 
depends on
\begin{align*}
X^{s,x}_N(
s,x(s),\alpha;t,e,\alpha_1;t_2,e_2,\alpha_2;\ldots;t_N,e_N,\alpha_N
),
\end{align*}
which equals
\begin{align*}
X^{t,y}_{N-1}(t,y(t),\alpha_1;t_2,e_2,\alpha_2;\ldots;t_N,e_N,\alpha_N),
\end{align*}
we have (again up to an $r^1_{N+1}(\mathbf{a}^1,\delta_{
((s,x(s)),(t,e);\alpha)
})$-null set)
\begin{align*}
&g_N((s,x(s)),(t,e),(t_2,e_2),\ldots,(t_N,e_N);\alpha,\alpha_1,\ldots,\alpha_N)\\
&\qquad = 
\overrightarrow{\ell^1_{N-1}}((t,y(t),\alpha_1);(t_2,e_2,\alpha_2);\ldots;(t_N,e_N,\alpha_N)).
\end{align*}
Thus 
\begin{align*}
&\int g_N\,\dd r^1_{N+1}(\mathbf{a}^1,\delta_{
((s,x(s)),(t,e);\alpha)
})\\&=\int_U\int_S\int_U\cdots\int_S\int_U
\overrightarrow{\ell^1_{N-1}}((t,y(t),\alpha_1);(t_2,e_2,\alpha_2);\ldots;(t_N,e_N,\alpha_N))\\
&\,\,\,\,
\mathbf{a}^1_N(\dd \alpha_N\vert
(s,x(s)),(t,y(t)),(t_2,e_2),\ldots,(t_N,e_N);\alpha,\alpha_1,\ldots,\alpha_{N-1}
)\\
&\,\,\,\,
Q(t_N,\dd e_N\vert
t_{N-1},
\underbrace{
X^{s,x}_{N-1}(
s,x(s),\alpha;t,e,\alpha_1;t_2,e_2,\alpha_2;\ldots;t_{N-1},e_{N-1},\alpha_{N-1}
)
}_{=X^{t,y}_{N-2}(t,y(t),\alpha_1;t_2,e_2,\alpha_2;\ldots;t_{N-1},e_{N-1},\alpha_{N-1})},
\alpha_{N-1})\\
&\,\,\,\, [-\dd F(t_N\vert t_{N-1},
X^{t,y}_{N-2}(t,y(t),\alpha_1;t_2,e_2,\alpha_2;\ldots;t_{N-1},e_{N-1},\alpha_{N-1}),
\alpha_{N-1})]\cdots\\
&\,\,\,\,\mathbf{a}^1_2(\dd\alpha_2\vert
(s,x(s)),(t,y(t)),(t_2,e_2);\alpha,\alpha_1
)\\
&\,\,\,\, \underbrace{Q(t_2,\dd e_2\vert t,\overbrace{
X^{s,x}_1(s,x(s),\alpha;t,e,\alpha_1)
}^{
=X^{t,y}_0(t,y(t),\alpha_1)=\phi^{t,y,\alpha_1}
},\alpha_1)}_{
=Q^{t,y,\alpha_1}(t_2,\dd e_2)
}\,
[-\dd F^{t,y,\alpha_1}(t_2)]\\
&\,\,\,\,\mathbf{a}_1^1(\dd\alpha_1\vert(s,x(s)),(t,y(t));\alpha)\\
&=^{(\text{after relabeling of variables})}\\
&\,\,\,\,\int_U\int_S\int_U\cdots\int_S\int_U
\overrightarrow{\ell^1_{N-1}}((t,y(t),\alpha_0);(t_1,e_1,\alpha_1);
\ldots;(t_{N-1},e_{N-1},\alpha_{N-1}))\\
&\,\,\,\,
\mathbf{a}^1_{N}(\dd \alpha_{N-1}\vert
(s,x(s)),(t,y(t)),(t_1,e_1),\ldots,(t_{N-1},e_{N-1});\alpha,\alpha_0,\ldots,\alpha_{N-2}
)\\
&\,\,\,\,
Q(t_{N-1},\dd e_{N-1}\vert
t_{N-2},
X^{t,y}_{N-2}(t,y(t),\alpha_0;t_1,e_1,\alpha_1;\ldots;t_{N-2},e_{N-2},\alpha_{N-2}),
\alpha_{N-2})\\
&\,\,\,\, [-\dd F(t_{N-1}\vert t_{N-2},
X^{t,y}_{N-2}(t,y(t),\alpha_0;t_1,e_1,\alpha_1;\ldots;t_{N-2},e_{N-2},\alpha_{N-2}),
\alpha_{N-2})]\cdots\\
&\,\,\,\,\mathbf{a}^1_2(\dd\alpha_1\vert
(s,x(s)),(t,y(t)),(t_1,e_1);\alpha,\alpha_0
)\, 
Q^{t,y,\alpha_0}(t_1,\dd e_1)\,
[-\dd F^{t,y,\alpha_0}(t_1)]\\
&\,\,\,\,\mathbf{a}_1^1(\dd\alpha_0\vert(s,x(s)),(t,y(t));\alpha)\\
&=\check{\Mean}_N^{t,y,\mathbf{a}}\Bigl[
\overrightarrow{\ell^1_{N-1}}
\Bigr]
\end{align*}
\normalsize
with $\mathbf{a}=(\mathbf{a}_0,\mathbf{a}_1,\ldots)\in\mathbb{A}^\prime$ such that,
for each $n\in\N$,
\begin{align*}
&\mathbf{a}_{n-1}(\dd\alpha_{N-1}\vert
(t_0,e_0),\ldots,(t_{N-1},e_{N-1});\alpha_0,\ldots,\alpha_{N-1})\\
&\qquad =\mathbf{a}^1_n(\dd\alpha_{N-1}\vert
(s,x(s)),(t_0,e_0),\ldots,(t_{N-1},e_{N-1});\alpha,\alpha_0,\ldots,\alpha_{N-1}).
\end{align*}
Similarly, one can show that, for every $\mathbf{a}\in\mathbb{A}^\prime$, there exists
an $\mathbf{a}^1\in\mathbb{A}^1$ such that 
\begin{align*}
\check{\Mean}_N^{t,y,\mathbf{a}}\Bigl[
\overrightarrow{\ell^1_{N-1}}
\Bigr]=\int g_N\,\dd r^1_{N+1}(\mathbf{a}^1,\delta_{
((s,x(s)),(t,e);\alpha)}).
\end{align*}

Consequently, we have
\begin{align*}
V^\prime(t,\phi^{s,x,\alpha}\otimes_t e)&=\inf_{\mathbf{a}\in\mathbb{A}^\prime}  J(t,y,\mathbf{a})
\overset{\text{\eqref{0E:Jtya:sum}}}{{\color{black}=}}
\inf_{\mathbf{a}\in\mathbb{A}^\prime} 
\sum_{n=0}^\infty
\check{\Mean}^{t,y, 
\mathbf{a}}_{n+1}\left[
\vec{\ell^1_n}
\right]\\
&=\inf_{\mathbf{a}^1\in\mathbb{A}^1} \sum_{n=0}^\infty
\int g_{n+1}\,\dd r^1_{n+2}(\mathbf{a}^1,\delta_{
((s,x(s)),(t,e);\alpha)
})\\
&\overset{\text{\eqref{0E:OriginalModel:Cost:kOriginating}}}{{\color{black}=}}
\inf_{\mathbf{a}^1\in\mathbb{A}^1} J_{\mathbf{a}^1}(1;(s,x(s)),(t,e);\alpha)\\
&\overset{\text{\eqref{E:new:optimal:cost}}}{{\color{black}=}}
J^\ast(1;(s,x(s)),(t,e);\alpha).
\end{align*}
This concludes 
the proof.
\end{proof}

}

{\color{black} 
\begin{theorem}\label{0T:OneStepDPP}
Let $(t_0,e_0)\in S$.  Then \small
\begin{align*}\label{0E:T:OneSteppDPP}
J^\ast(0;t_0,e_0)=\inf_{\alpha\in U} \Bigl\{
g_0((t_0,e_0);\alpha)+\int_S  J^\ast(1;(t_0,e_0),(t_1,e_1);\alpha) \,\vec{Q}_0(\dd t_1\,\dd e_1\vert (t_0,e_0);\alpha)
\Bigr\}.
\end{align*}
\normalsize
\end{theorem}
\begin{proof}
This result is a specific instance of Theorem~\ref{T:OneStepDPP}, whose proof is provided
in appendix~\ref{S:Appendix:MDP}. 
\end{proof}

\begin{theorem}\label{0T:value:DPP}
Let $(s,x)\in (0,T]\times\Omega$. Then 
\begin{equation}\label{0E:T:value:DPP}
\begin{split}
V^\prime(s,x)&=\inf_{a\in\mathcal{A}} \Bigl\{\chi^{s,x,a}(T)\, h(\phi^{s,x,a}) + \int_s^T \Bigl(\chi^{s,x,a}(t)\,\ell^{s,x,a}(t)
\\& \qquad+\int_{\R^d}\lambda^{s,x,a}(t)\chi^{s,x,a}(t) V^\prime(t,\phi^{s,x,a}\otimes_t\,e)\,Q^{s,x,a}(t,\dd e)
\Bigr)\,\dd t
\Bigr\}\\
&=:[G^\circ V^\prime](s,x).
\end{split}
\end{equation}
\end{theorem}
\begin{proof}
By Theorem~\ref{0T:OneStepDPP} (below the first $\Delta^\prime$ can be omitted because we have $\alpha_0=\Delta^\prime$ if and only if $t_0=\infty$ but
$t_0=s\in (0,T]$) and by noting that the running cost 
vanishes if $t>T$ due to \eqref{E:vecln}, we have
\begin{align*}
&J^\ast(0;s,x(s))=\inf_{\alpha\in \mathcal{A}\cup\{\Delta^\prime\}} \Bigl\{
g_0((s,x(s));\alpha)\\&\qquad\qquad+\int_{
(\R_+\times E)\cup\{(\infty,\Delta)\}
}  J^\ast(1;(s,x(s)),(t,e);\alpha) \,\vec{Q}_0(\dd t\,\dd e \vert (s,x(s));\alpha)
\Bigr\}\\
&=\inf_{\alpha\in \mathcal{A}} \Bigl\{
g_0((s,x(s));\alpha)\\&\qquad\qquad+\int_{
[0,T]\times E} 
 J^\ast(1;(s,x(s)),(t,e);\alpha) \,\vec{Q}_0(\dd t\,\dd e \vert (s,x(s));\alpha)
\Bigr\}.
\end{align*}
Together  with 
Theorem~\ref{0L:value:DPP},
we get \eqref{0E:T:value:DPP}.
\end{proof}

For the next theorem, recall the set $\mathbb{A}$ of non-randomized policies
from Definition~\ref{D:randomizedPolicy}.

\begin{theorem}\label{0T:AppendixB:NonRandomized}
Let $(t_0,e_0)\in S$.  Then
$J^\ast(0;t_0,e_0)=\inf_{\mathbf{a}\in\mathbb{A}} J_{\mathbf{a}}(0;t_0,e_0)$.
\end{theorem}
\begin{proof}
This result is a specific instance of Theorem~\ref{T:AppendixB:NonRandomized}, whose proof is provided
in appendix~\ref{S:Appendix:MDP}. 
\end{proof}

\begin{theorem}\label{0T:value:value}
$V^\prime=V$.
\end{theorem}
\begin{proof}
Let $(s,x)\in (0,T]\times\Omega$.
By Theorem~\ref{0T:AppendixB:NonRandomized}, $J^\ast(0;s,x(s))=\inf_{\mathbf{a}\in\mathbb{A}} J_\mathbf{a}(0;s,x(s))$.
Together  with Theorem~\ref{0L:value:DPP} and \eqref{0E:J:equal:J}, we can deduce that
 $V^\prime(s,x)=V(s,x)$.
\end{proof}

Given Theorems~\ref{0T:value:DPP} and \ref{0T:value:value}, the proof of 
the following dynamic programming principle is standard (see, e.g., Proposition~III.2.5 in
\cite{BardiCapuzzoDolcetta}).
\begin{cor}\label{0C:value:DPP}
Let $(s,x)\in (0,T]\times\Omega$ and $s_1\in [s,T]$ . Then
\begin{equation}\label{0E:C:value:DPP}
\begin{split}
V(s,x)&=\inf_{a\in\mathcal{A}} \Bigl\{\chi^{s,x,a}(s_1)\, V(s_1,\phi^{s,x,a}) + \int_s^{s_1} \Bigl(\chi^{s,x,a}(t)\,\ell^{s,x,a}(t)
\\& \qquad+\int_{\R^d}\lambda^{s,x,a}(t)\chi^{s,x,a}(t) V(t,\phi^{s,x,a}\otimes_t\,e)\,Q^{s,x,a}(t,\dd e)
\Bigr)\,\dd t
\Bigr\}.
\end{split}
\end{equation}
\end{cor}

}

\section{Non-local  path-dependent Hamilton--Jacobi--Bellman equations}\label{S:HJB}
In this section,  the following additional assumption is in force. This will allow us to
use the results of section~7 in  {\color{black}our companion paper} \cite{BK1}.
\begin{assumption}
The control action space  $A$ is a countable union of compact metrizable subsets of $A$.
\end{assumption}

We  study 
the terminal-value problem 
\begin{equation}\label{E:PPDEinitial}
\begin{split}
-\partial_t u(t,x)-{\color{black} H^u(t,x,u(t,x), \partial_x u(t,x))}&=0,\quad (t,x)\in [0,T)\times\Omega,\\
 u(T,x)&=h(x),\quad x\in\Omega,
\end{split}
\end{equation}
where  {\color{black} $(t,x,y,  z,\psi)\mapsto H^\psi(t,x,y,z)$,
$[0,T]\times\Omega\times\R\times\R^d\times \mathrm{BLSA}([0,T]\times\Omega)\to\R$},  is defined by
\begin{equation}\label{E:Def:Hamiltonian:F}
\begin{split}
{\color{black} H^\psi(t,x,y,  z)}&:= 
{\color{black}\inf_{a\in A}}\Bigl\{
\ell(t,x,a)+(f(t,x,a),z)\\ 
&\qquad\qquad+\lambda(t,x,a)\,\int_{\R^d} \left[\psi(t,x\otimes_t e)-y\right]\,Q(t,x,a, \dd e)
\Bigr\}.
\end{split}
\end{equation}

{\color{black}

\begin{remark}
Recall from section~\ref{SS:CanonicalPathSpace}
that $[0,T]\times\Omega$ and its subsets are considered to be equipped with
 the pseudo-metric defined in \eqref{E:SS:CanonicalPathSpace:PseudoMetric}.
 This implies that a function $u:[0,T]\times\Omega\to\R$ that is semicontinuous
 (with respect to this pseudo-metric) is non-anticipating, i.e., $u(t,x)=u(t,x_{\cdot\wedge t})$
 for each $(t,x)\in [0,T]\times\Omega$.
\end{remark}

Let us now outline our approach for establishing existence and uniqueness of \eqref{E:PPDEinitial}. 
First note that \eqref{E:PPDEinitial} formally corresponds to the HJB equation
associated to the deterministic optimal control problem that is specified by the value function 
\begin{align}\label{E:recursiveOptimalControl}
v(s,x)=\inf_{\alpha\in\mathcal{A}} \Bigl\{ \chi^{s,x,\alpha}(T)\,  h(\phi^{s,x,\alpha})
+\int_{s}^T \chi^{s,x,\alpha}(t)\,
\ell^v(t,\phi^{s,x,\alpha},\alpha(t))\,
\dd t\Bigr\}
\end{align}
where \emph{the  running cost}
\begin{align*}
\ell^v(t,x,a):=\ell(t,x,a)+\int_{\R^d} v(t,x\otimes_t e)\lambda(t,x,a)\,Q(t,x,a,\dd e)
\end{align*} 
\emph{depends on the value function $v$ itself} (cf.~Theorem~\ref{0T:value:DPP}).
To tackle this issue related to the running cost, we consider related
deterministic optimal control problems specified by value functions
$v^\psi:[0,T]\times\Omega\to\R$ parametrized by functions $\psi\in\mathrm{BLSA}([0,T]\times\Omega)$.
These value functions are defined as follows:
\begin{align}\label{E:nonRecursiveOptimalControl}
v^\psi(s,x):=\inf_{\alpha\in\mathcal{A}} \Bigl\{ \chi^{s,x,\alpha}(T)\,  h(\phi^{s,x,\alpha})
+\int_{s}^T \chi^{s,x,\alpha}(t)\,
\ell^\psi(t,\phi^{s,x,\alpha},\alpha(t))\,
\dd t\Bigr\}.
\end{align}
Here, the running costs $\ell^\psi:[0,T]\times\Omega\to A$ are given by
\begin{align*}
\ell^\psi(t,x,a):=\ell(t,x,a)+\int_{\R^d} \psi(t,x\otimes_t e)\lambda(t,x,a)\,Q(t,x,a,\dd e).
\end{align*}
Each $v^\psi$ is expected to be a unique solution of  the terminal value problem
\renewcommand{\theequation}
{$\text{\textrm{TVP}}_\psi$}
\begin{equation}\label{E:PPDEpsi}
\begin{split}
-\partial_t u(t,x)-{\color{black} H^\psi(t,x,u(t,x),\partial_x u(t,x))}&=0,\quad (t,x)\in [0,T)\times\Omega,\\
 u(T,x)&=h(x),\quad x\in\Omega.
\end{split},
\end{equation}
This topic is addressed  in Theorem~\ref{T:EUpsi} at the end of subsection~\ref{SS:minimax}.
Next, we establish in subsection~\ref{SS:G} that the operator\footnote{
See \eqref{E:nonRecursiveOptimalControl} for the definition of $v^\psi$.}
\renewcommand{\theequation}{\thesection.\arabic{equation}}
\begin{align}\label{E:0:Operator:G}
G:\psi\mapsto v^\psi,\, \mathrm{BLSA}([0,T]\times\Omega)\to \R^{[0,T]\times\Omega},
\end{align}
has a unique fixed point $\bar{\psi}$ in $\mathrm{BLSA}([0,T]\times\Omega)$, 
which satisfies \eqref{E:recursiveOptimalControl} in place of $v$. 
This result together with existence and uniqueness for \eqref{E:PPDEpsi}
will fairly quickly lead to existence and uniqueness for minimax solutions of \eqref{E:PPDEinitial}
in subsection~\ref{SS:EU}.
Also note that $\bar{\psi}$ coincides on $(0,T]\times\Omega$
 with the value function $V$ defined in \eqref{E:Cost:and:Value} of our stochastic optimal control problem treated in
section~\ref{S:OptimalControl}, i.e., 
\begin{align*}
-\partial_t u(t,x)-{\color{black}H^u(t,x,u(t,x), \partial_x u(t,x))}=0
\end{align*}
is the HJB equation associated to the stochastic optimal control problem for path-dependent PDPs specified
by \eqref{E:Cost:and:Value}.
 Finally, we will obtain a comparison principle between minimax sub- and and supersolutions
of \eqref{E:PPDEinitial} in subsection~\ref{SS:comparison}.
}

{\color{black}The next subsections~\ref{SS:classical} and \ref{SS:minimax} are devoted to  the introduction of
basic material such as path derivatives for functions on $[0,T]\times\Omega$
and  notions of solutions for 
\eqref{E:PPDEinitial}.}

\subsection{Path derivatives and classical solutions}\label{SS:classical}
{\color{black}  The concept of derivatives on $[0,T]\times\Omega$ that we are using here
is a slight modification of the notion of  coinvariant derivatives due to
A.~V.~Kim \cite{Kim85}. 
\begin{definition}
 The space $C_b^{1,1}([0,T]\times\Omega)$ is the set of all  $u\in C_b([0,T]\times\Omega)$,
 for which there exist functions $\partial_t u\in C_b([0,T]\times\Omega)$ and $\partial_x u\in C_b([0,T],\R^d)$
 also called \emph{path derivatives} of $u$ such that, for every $(t_0,x_0)\in [0,T)\times\Omega$,
 every $x\in\Omega$ that is Lipschitz continuous on $[t_0,T]$ and coincides with $x_0$ on $[0,t_0]$,
 and every $t\in (t_0,T]$, 
 \begin{align*}
 u(t,x)-u(t_0,x_0)=\int_{t_0}^t (\partial_t u(s,x)+\partial_x u(s,x)\cdot x^\prime(s))\,\dd s.
 \end{align*}
\end{definition}

Note that if $u\in C^{1,1}_b([0,T]\times\Omega)$, then its path derivatives  $\partial_t u$ and $\partial_x u$
are uniquely determined. Moreover, there is a 
 close relationship of the here defined
path derivatives with Dupire's
explicitly defined horizontal and vertical path derivatives introduced
in \cite{dupirefunctional}. We refer to section~3 of our companion paper \cite{BK1} for details.

\begin{definition}
A function $u:[0,T]\times\Omega\to\R$ is a \emph{classical solution} of \eqref{E:PPDEinitial} if
$u\in C_b^{1,1}([0,T]\times\Omega)$, $u(T,\cdot)=h$, and, for every $(t,x)\in [0,T)\times\Omega$,
\begin{align*}
-\partial_t u(t,x)-{\color{black}H^u(t,x,u(t,x), \partial_x u(t,x))}=0.
\end{align*}
\end{definition}

\subsection{Minimax solutions}\label{SS:minimax}
Here, we define our notion of nonsmooth solutions of the terminal value problems \eqref{E:PPDEinitial} and 
\eqref{E:PPDEpsi}, $\psi\in \mathrm{BLSA}([0,T]\times\Omega)$. To this end we need the following path spaces. 
Given $(s_0,x_0)\in [0,T)\times\Omega$ and $z\in\R^d$, define 
\begin{align*}
\mathcal{X}(s_0,x_0)&:=\Big \{x\in\Omega:\, x=x_0\text{ on $[0,s_0]$, $x\vert_{[s_0,T]}$
is absolutely continuous 
with}\\
&\qquad\qquad \abs{x^\prime(t)}\le C_f(1+\sup_{s\le t} \abs{x(s)})\text{ a.e.~on $(s_0,T)$}\Big \}.
\end{align*}
Given  $(s_0,x_0)\in [0,T)\times\Omega$, $z\in\R^d$, and $\psi\in \mathrm{BLSA}([0,T]\times\Omega)$, define
\begin{equation}\label{E:Ypsi}
\begin{split}
&\mathcal{Y}_\psi (s_0,x_0,y_0,z):=\Big \{(x,y)\in\mathcal{X}(s_0,x_0)\times C([s_0,T]):\\
&\qquad
  y(t)=y_0+\int_{s_0}^t (x^\prime(s),z)-{\color{black}H^\psi(s,x,y(s),z)}\,\dd s\text{ on $[s_0,T]$}\Big \}.
\end{split}
\end{equation}
}

{\color{black}
\begin{definition}\label{D:mm:PPDEu}
Let  $u:[0,T]\times\Omega\to\R$.

(i) $u$ is a \emph{minimax supersolution} of \eqref{E:PPDEinitial} if  $u\in {\color{black}\mathrm{BLSC}} 
([0,T]\times\Omega)$,
if $u=u(s,x)$ is Lipschitz in $x$, 
if   $u(T,\cdot)\ge h$,  and if,
 for all $(s_0,x_0,z)\in [0,T)\times\Omega\times\R^d$ and all
 $y_0\ge u(s_0,x_0)$, there is an
$(x,y)\in \mathcal{Y}_u (s_0,x_0,y_0,z)$ such that $y(t)\ge u(t,x)$ for each $t\in[s_0,T]$.

(i) $u$ is a \emph{minimax subsolution} of \eqref{E:PPDEinitial} if 
  $u\in   {\color{black}\mathrm{BUSC}}
  ([0,T]\times\Omega)$,
if $u=u(s,x)$ is Lipschitz in $x$, 
 if $u(T,\cdot)\le h$, and if
  for all $(s_0,x_0,z)\in [0,T)\times\Omega\times\R^d$ and all
 $y_0\le u(s_0,x_0)$, there is an
$(x,y)\in \mathcal{Y}_u (s_0,x_0,y_0,z)$ such that $y(t)\le u(t,x)$ for each $t\in[s_0,T]$.

(iii) $u$ is a \emph{minimax solution} of \eqref{E:PPDEinitial} if $u$ is both a \emph{minimax supersolution}
and a \emph{minimax subsolution} of  \eqref{E:PPDEinitial}.
\end{definition}}

{\color{black}
\begin{theorem}
Let $u\in C_b^{1,1}([0,T]\times\Omega)$. 
Suppose  in addition that $u=u(t,x)$ is Lipschitz in $x$ and that all mappings
\begin{align}\label{E:F:continuity:consistency}
(t,x)\mapsto {\color{black} H^u(t,x,u(t,x), z)},\quad [0,T]\times\Omega\to\R,\quad z\in\R^d,
\end{align}
 are continuous.
Then $u$  is a classical solution of \eqref{E:PPDEinitial} if and only if $u$ is a minimax solution of \eqref{E:PPDEinitial}.
\end{theorem}

\begin{proof}
(i) First, assume that $u$ is a classical solution of \eqref{E:PPDEinitial}. We  show that
$u$ is a minimax supersolution of \eqref{E:PPDEinitial}.
To this end,   fix $(s_0,x_0,z)\in [0,T)\times\Omega\times\R^d$. 
{\color{black} 
By Proposition~5.1 of \cite{Lukoyanov03}, there is a pair
$(x,y)\in\mathcal{X}(s_0,x_0)\times C([s_0,T])$ such that 
\begin{equation}\label{E1:Compatibility}
\begin{split}
y(t)&=u(s_0,x_0)+
\int_{s_0}^t (x^\prime(s),z)-{\color{black}H^u (s,x,u(s,x),z)}\,\dd s\text{ and}\\
y(t)&=u(t,x)
\end{split}
\end{equation}
for every $t\in [s_0,T]$. We proceed by considering the following two cases.}

\textit{Case 1.} 
Let $y_0=u(s_0,x_0)$. 
{\color{black} By \eqref{E1:Compatibility}, we have $(x,y)\in\mathcal{Y}_u(s_0,x_0,y_0,z)$ and 
$y(t)=u(t,x)$ on $[s_0,T]$.} 

\textit{Case 2.} 
Let $y_0>u(s_0,x_0)$. {\color{black}  Now, let  $\tilde{y}$ be a solution of 
\begin{equation}\label{E2:Compatibility}
\begin{split}
\tilde{y}^\prime(t)&= x^\prime(t)\cdot z-{\color{black}H^u(t,x,\tilde{y}(t),z)}\quad\text{a.e.~on $(s_0,T)$},\\
\tilde{y}(s_0)&=y_0,
\end{split}
\end{equation}
Then $(x,\tilde{y})\in \mathcal{Y}_u(s_0,x_0,y_0,z)$. We claim that
\begin{align}\label{E:claim:Compatibility}
\tilde{y}(t)> u(t,x) \text{ on $(s_0,T)$.}
\end{align}
For the sake of a contradiction, assume that \eqref{E:claim:Compatibility} does not hold, i.e., 
 there is a smallest time $s_1\in (s_0,T)$ such that $\tilde{y}(s_1)= u(t,s_1)$
and $\tilde{y}(t)> u(t,x)$ on $(s_0,s_1)$. 
But then, by \eqref{E1:Compatibility}, \eqref{E2:Compatibility}, and \eqref{E:Def:Hamiltonian:F},  we have
\begin{align*}
0&=\tilde{y}(s_1)-u(s_1,x)\\&=\tilde{y}(s_0)-u(s_0,x_0)+\int_{s_0}^{s_1}  
\Bigl[{\color{black}H^u(t,x,u(t,x),z)}
{\color{black} -H^u(t,x,\tilde{y}(t),z)}\Bigr]\,\dd t\\
&\ge \tilde{y}(s_0)-u(s_0,x_0)\\ &>0,
\end{align*}
which is a contradiction (cf.~section~6.2 of \cite{BK1}). Thus, 
\eqref{E:claim:Compatibility} holds. 

By Cases~1 and 2, we can conclude that   $u$ is a minimax supersolution of \eqref{E:PPDEinitial}.
Similarly, one can show that $u$ is a minimax subsolution of \eqref{E:PPDEinitial}.
Hence, $u$ is a minimax solution of \eqref{E:PPDEinitial}.

(ii) Now, assume that $u$ is a minimax solution of \eqref{E:PPDEinitial}.
Fix $(s_0,x_0)\in [0,T)\times\Omega$. 
Then there is an $(x,y)\in\mathcal{Y}_u(s_0,x_0,u(s_0,x_0),\partial_x u(s_0,x_0))$ such
that $y(t)\ge u(t,x)$ on $[s_0,T]$, i.e., for every $t\in [s_0,T]$, we have
\begin{align*}
&u(s_0,x_0)+\int_{s_0}^t \left[{\color{black} x^\prime(s)\cdot \partial_x u(s_0,x_0)}-
{\color{black}H^u(s,x,y(s),\partial_x u(s_0,x_0))}\right]\,\dd s\\ &\qquad\ge u(t,x),
\end{align*}
 or, equivalently,
 \begin{align*}
&\int_{s_0}^t \left[{\color{black} x^\prime(s)\cdot \partial_x u(s_0,x_0)}-{\color{black}H^u(s,x,y(s),\partial_x u(s_0,x_0))}\right]\,\dd s\\
&\qquad\qquad\ge \int_{s_0}^t
\left[ \partial_t u(s,x)+x^\prime(s)\cdot \partial_x u(s,x)\right]\,\dd s.
\end{align*}
Hence, by \eqref{E:F:continuity:consistency},
\begin{align*}
-\partial_t u(s_0,x_0)-{\color{black}H^u(s_0,x_0,u(s_0,x_0),\partial_x u(s_0,x_0))}\ge 0.
\end{align*}
Similarly, one can see that the opposite inequality holds as well, i.e., $u$ is a classical solution of \eqref{E:PPDEinitial}.}
\end{proof}
}

{\color{black}
\begin{definition}\label{D2:MinimaxSolutions}
Let $\psi\in \mathrm{BLSA}([0,T]\times\Omega)$ and  $u:[0,T]\times\Omega\to\R$.

(i) $u$ is a \emph{minimax supersolution} of \eqref{E:PPDEpsi} if $u\in\mathrm{LSC}([0,T]\times\Omega)$,
if $u(T,\cdot)\ge h$ on $\Omega$, and if, for every $(s_0,x_0,z)\in [0,T)\times\Omega\times\R^d$, and
every $y_0\ge u(s_0,x_0)$, there exists an $(x,y)\in\mathcal{Y}_\psi(s_0,x_0,y_0,z)$ such that
$y(t)\ge u(t,x)$ for each $t\in[s_0,T]$.

(ii)  $u$ is a \emph{minimax subsolution} of \eqref{E:PPDEpsi} if $u\in\mathrm{USC}([0,T]\times\Omega)$,
if $u(T,\cdot)\le h$ on $\Omega$, and if, for every $(s_0,x_0,z)\in [0,T)\times\Omega\times\R^d$, and
every $y_0\le u(s_0,x_0)$, there exists an $(x,y)\in\mathcal{Y}_\psi(s_0,x_0,y_0,z)$ such that
$y(t)\le u(t,x)$ for each $t\in[s_0,T]$.

(iii) $u$ is a \emph{minimax solution} of \eqref{E:PPDEpsi} if $u$ is both a \emph{minimax supersolution}
and a \emph{minimax subsolution} of  \eqref{E:PPDEpsi}.
\end{definition}

The next result uses  the operator $G$ from \eqref{E:0:Operator:G}.

\begin{theorem}\label{T:EUpsi}
Let $\psi_0=\psi_0(s,x)\in \mathrm{BLSA}( [0,T]\times\Omega)$ be
Lipschitz in $x$. 

(a) Then $G^2\psi_0$ is the unique minimax
solution of  \eqref{E:PPDEpsi} with $\psi=G\psi_0$.

(b) If, in addition, $\psi_0$ is continuous, then $G\psi_0$ is the unique minimax 
solution of  \eqref{E:PPDEpsi} with $\psi=\psi_0$.
\end{theorem}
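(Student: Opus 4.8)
The plan is to connect the fixed-point operator $G$ with the minimax solution theory developed in the companion paper \cite{BK1}, exploiting the fact that $G$ decomposes into the operators $G_{s_1,s_2;\eta,a}$ and $G_{s_1,s_2;\eta}$ from \eqref{E:G:eta}, which are the Duhamel/representation operators naturally attached to the local Hamilton--Jacobi equation \eqref{E:PPDEpsi}. The key observation is that $F_\psi(t,x,y,z)$ (after the discounting-by-$\lambda$ is absorbed) is exactly the Hamiltonian governing \eqref{E:PPDEpsi}, and that for fixed $\psi$ the function $G_{0,T;h}$ applied repeatedly solves \eqref{E:PPDEpsi} by the contraction argument already used in the proof of Theorem~\ref{T:Claim1}. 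So the first step is to verify the hypotheses of the relevant existence/uniqueness theorem in section~7 of \cite{BK1}: one needs $F_\psi$ to be measurable in $t$, continuous and Lipschitz in $x$ (with the Lipschitz constant in $x$ depending on the Lipschitz constant of $\psi$ in $x$), Lipschitz in $y$ (uniformly, from $\sup_a\lambda\le C_\lambda$), and Lipschitz in $z$ with the $C_f(1+\norm{x}_t)$ growth built into the definition of $\mathcal{X}(s_0,x_0)$. Under the hypothesis of part~(a), $\psi_0$ is Lipschitz in $x$, hence by Assumption~\ref{A:Q}(iv) the nonlocal term $\int \psi_0(t,x\otimes_t e)\lambda\,Q(t,x,a,\dd e)$ is Lipschitz in $x$; but for the solution theory of \eqref{E:PPDEpsi} with $\psi = G\psi_0$ one needs $G\psi_0$ Lipschitz in $x$, which is precisely what Lemma~\ref{L:barpsi:reg:Step0} delivers (applied with $s_1=0$, $s_2=T$, $\eta=h$, and the fact that $h$ is Lipschitz by Assumption~\ref{A:data:controlled}(iv)). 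This is why the theorem is stated with $G^2\psi_0$ rather than $G\psi_0$: one iteration of $G$ is needed to turn a merely-in-$\mathrm{BLSA}$ (but Lipschitz-in-$x$) function into something whose Lipschitz-in-$x$ regularity propagates, and a second iteration to build the solution itself.

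Concretely, for part~(a), set $\psi := G\psi_0$; by Lemma~\ref{L:barpsi:reg:Step0} this is Lipschitz in $x$, so $F_\psi$ has the regularity required by the companion paper. Then I would invoke the existence-and-uniqueness theorem of \cite{BK1} for \eqref{E:PPDEpsi}: it produces a unique minimax solution, and the representation formula in that theorem identifies this solution with the value function of the associated (deterministic, local) control problem, which in our notation is exactly $G_{0,T;h}$ applied to $\psi$ --- but since $G_{0,T;h}$ restricted in the right way is a contraction whose unique fixed point is obtained by iteration, and since one more application of $G$ (with frozen nonlocal part $\psi = G\psi_0$) reproduces $G^2\psi_0$, the unique minimax solution equals $G^2\psi_0$. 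More carefully, the map $v\mapsto [(G_{0,T;G\psi_0})\,v]$ --- i.e.\ the operator that keeps the nonlocal integrand equal to $G\psi_0$ but iterates the Duhamel step --- has a unique fixed point by the partition-and-contraction argument of Step~1 in the proof of Theorem~\ref{T:Claim1} (using $r_{k+1}-r_k<\tfrac12$), and that fixed point is both the minimax solution of \eqref{E:PPDEpsi} (by \cite{BK1}) and equal to $G^2\psi_0$ (because applying $G$ once to $\psi_0$ produces $G\psi_0$, and then the fixed-point of $G_{\cdot,\cdot;G\psi_0}$ is what the second "$G$" computes when its nonlocal slot is already frozen to $G\psi_0$). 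For part~(b), if $\psi_0$ is in addition continuous, then $\psi_0$ itself already has the Lipschitz-in-$x$ regularity needed to run \eqref{E:PPDEpsi} with $\psi = \psi_0$, so no extra iteration is required and the same argument gives $G\psi_0$ as the unique minimax solution.

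The steps, in order, are: (1) record that $F_\psi$ inherits measurability in $t$ and the requisite Lipschitz/growth estimates in $(x,y,z)$ from Assumptions~\ref{A:data:controlled} and \ref{A:Q} whenever $\psi$ is Lipschitz in $x$; (2) in case (a), use Lemma~\ref{L:barpsi:reg:Step0} to upgrade $\psi_0$ (Lipschitz in $x$) to $G\psi_0$ (still Lipschitz in $x$, with controlled constant), so that the hypotheses of step~(1) hold for $\psi = G\psi_0$; (3) quote the existence-and-uniqueness theorem for path-dependent Hamilton--Jacobi equations with time-measurable, $u$-dependent Hamiltonians from \cite{BK1} to get a unique minimax solution of \eqref{E:PPDEpsi} together with its variational representation; (4) identify that representation with the fixed point of the Duhamel operator $G_{0,T;h}$ with nonlocal slot frozen to $\psi$, using the contraction/partition argument imported verbatim from Step~1 of the proof of Theorem~\ref{T:Claim1}; (5) observe that this fixed point equals $G(\text{that same nonlocal function})$, hence equals $G^2\psi_0$ in case (a) and $G\psi_0$ in case (b); (6) for case (b), note that continuity of $\psi_0$ lets one skip the upgrading step (2).

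The main obstacle is step~(3)--(4): making the identification between "minimax solution of \eqref{E:PPDEpsi}" (a PDE-theoretic object defined via the test-path sets $\mathcal{X}, \mathcal{Y}_\psi$) and "fixed point of $G_{0,T;h}$ with frozen nonlocal term" (a control-theoretic/Duhamel object) completely precise, and checking that the companion paper's theorem applies with exactly the function spaces and the pseudo-metric used here --- in particular that the $x$-Lipschitz constant produced by Lemma~\ref{L:barpsi:reg:Step0} is in the admissible range for the comparison principle of \cite{BK1}, and that the $\essinf_{a\in A}$ (rather than $\inf_{a\in\mathcal{A}}$ over open-loop controls) in the definition of $F_\psi$ matches the Hamiltonian form treated there. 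A secondary, more bookkeeping-type point is to make sure the terminal condition $u(T,\cdot)=h$ is handled consistently across the two formulations (value-function side versus minimax-solution side), i.e.\ that $G$ indeed encodes $u(T,\cdot)=h$ via the $\chi^{s,x,a}(T)h(\phi^{s,x,a})$ term; this is straightforward but should be stated.
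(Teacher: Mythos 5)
Your high-level plan is the paper's: absorb the frozen nonlocal term into a modified running cost $\ell^{[G\psi_0]}(t,x,a):=\ell(t,x,a)+\int [G\psi_0](t,x\otimes_t e)\lambda(t,x,a)\,Q(t,x,a,\dd e)$ and invoke the companion paper's existence-and-uniqueness theorem (Theorem~7.7 in \cite{BK1}), whose representation formula immediately identifies the unique minimax solution with the value of the associated local control problem, namely $G(G\psi_0)=G^2\psi_0$ in case~(a) and $G\psi_0$ in case~(b). Your remark that Lemma~\ref{L:barpsi:reg:Step0} is what propagates Lipschitz-in-$x$ regularity from $\psi_0$ to $G\psi_0$ is a legitimate elaboration of a step the paper leaves implicit.

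However, your ``more careful'' identification via the fixed point of $v\mapsto[(G_{0,T;G\psi_0})\,v]$ is a genuine error. The subscript slot in $G_{s_1,s_2;\eta}$ holds the \emph{terminal} datum, not the nonlocal integrand, and since $[G\psi_0](T,\cdot)=h$ the operator $G_{0,T;G\psi_0}$ is just $G$ itself, whose unique fixed point is $\bar\psi=V$ (Theorem~\ref{T:Claim1}) --- not $G^2\psi_0$. More fundamentally, no iteration is needed here at all: once the nonlocal integrand is frozen, the Duhamel formula underlying Theorem~7.7 of \cite{BK1} is \emph{exact}, so the minimax solution equals a single application $G_{0,T;h}(G\psi_0)=G^2\psi_0$; the contraction argument you import from Theorem~\ref{T:Claim1} pertains to the nonlinear problem in which the integrand depends on the unknown, and is out of place in this frozen setting. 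Your step~(4) is also internally inconsistent, asking for the fixed point of $G_{0,T;h}$ ``with nonlocal slot frozen to $\psi$'' when freezing that slot leaves no variable to iterate. Finally, you omit a step the paper does carry out: before invoking the BK1 theorem one must verify Borel-measurability of the maps $t\mapsto[G\psi_0](t,x\otimes_t e)$, which the paper obtains from the \emph{continuity} of $G\psi_0$ (via the argument of Theorem~7.4 in \cite{BK18JFA}) together with Theorem~96, 146-IV, in \cite{DMprobPot}; this continuity --- which $G\psi_0$ enjoys even when $\psi_0$ does not --- is the actual reason part~(a) must use $G^2\psi_0$ rather than $G\psi_0$, not the Lipschitz upgrade you emphasize.
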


\begin{proof}
(a) First, note that $G\psi_0$ is continuous (to see this, one can proceed as in the proof of 
{\color{black} Proposition~7.4 in \cite{BK1}).}
Thus, together with Theorem 96, 146-IV, in \cite{DMprobPot}, we can deduce that the maps
\begin{align}\label{E:Proof:EUpsi}
t\mapsto [G\psi_0](t,x\otimes_t e)],\, [0,T]\to\R,\,(x,e)\in\Omega\times E,
\end{align}
are Borel-measurable. Next, define $\ell^{[G\psi_0]}:[0,T]\times\Omega\times A\to\R_+$ by
\begin{align*}
\ell^{[G\psi_0]}(t,x,a):=\ell(t,x,a)+\int_{\R^d} [G\psi_0](t,x\otimes_t e)\lambda(t,x,a)\,Q(t,x,a,\dd e).
\end{align*}
By Assumptions~\ref{A:data:controlled} and \ref{A:Q}, the Borel-measurability of  the maps in 
\eqref{E:Proof:EUpsi}, and the Lipschitz continuity of $\psi_0$ in $x$,  Theorem~{\color{black}7.7}
 in \cite{BK1} with 
$\ell^{[G\psi_0]}$ in place of $\ell$ yields part~(a) of the theorem.

\medskip 

(b) It suffices to follow the lines of part~(a) of this proof but with $G\psi_0$ replaced by $\psi_0$.
This concludes the proof.
\end{proof}

}




\subsection{Properties of the operator $G$}\label{SS:G}


{\color{black}  Let us first recall  the operator $G$ from \eqref{E:0:Operator:G}:}
For each  function $\psi\in \mathrm{BLSA}([0,T]\times\Omega)$ and $(s,x)\in [0,T]\times\Omega$, 
\begin{equation}\label{E:Operator:G}
\begin{split}
&[G\psi](s,x)=\inf_{a\in\mathcal{A}} \Bigl\{ \chi^{s,x,a}(T)\,  h(\phi^{s,x,a})\\
&
+\int_{s}^T \chi^{s,x,a}(t)\,
\Bigl[\ell^{s,x,a}(t)
+ \int_{\R^d}
 \psi(t,\phi^{s,x,a}\otimes_t e)\,\lambda^{s,x,a}(t)\,
Q^{s,x,a}(t,\dd e)\Bigr]\,\dd t
\Bigr\}.
\end{split}
\end{equation}

\begin{remark} \label{R:G}
{\color{black} Note that}
\begin{align*}\label{E2:G}
G: \mathrm{BLSA}([0,T]\times\Omega)\to\mathrm{BLSA}([0,T]\times\Omega).
\end{align*}
{\color{black} This follows from} 
Theorem~96, 146-IV, in \cite{DMprobPot}, Lemma~\ref{L:phi:meas}, Remark~\ref{R:coeff:meas},
Proposition~7.48 in \cite{BS}, and Proposition~7.47 in \cite{BS}.
\end{remark}

Let us introduce several operators related to $G$. 
{\color{black}Given $s_1$, $s_2\in\R$ 
such that $0\le s_1<s_2\le T$, $\eta\in \mathrm{BLSA}([s_2,T]\times\Omega)$,  $a\in\mathcal{A}$, 
{\color{black} $\psi \in \mathrm{BLSA}([s_1,s_2]\times\Omega)$,}
and $(s,x)\in [s_1,s_2]\times\Omega$,  define
\begin{equation}\label{E:G:eta}
\begin{split}
&[(G_{s_1,s_2;\eta,a})\psi](s,x):=
\chi^{s,x,a}(s_2)\,\eta(s_2,\phi^{s,x,a})+\int_s^{s_2}\Bigl[
\chi^{s,x,a}(t)\,\ell^{s,x,a}(t)\\ &\qquad
+\int_{\R^d} \psi(t,\phi^{s,x,a}\otimes_t e)\,\lambda^{s,x,a}(t) \chi^{s,x,a}(t)\,Q^{s,x,a}(t,\dd e)
 \Bigr]\,\dd t\qquad\text{and}\\
& [(G_{s_1,s_2;\eta})\psi](s,x):=\inf_{\tilde{a}\in\mathcal{A}} [(G_{s_1,s_2;\eta,\tilde{a}})\psi](s,x).
\end{split}
\end{equation}
Note that $G_{s_1,s_2;\eta,a}$ and $G_{s_1,s_2;\eta}$ map bounded lower semi-analytic functions
to bounded lower semi-analytic functions. This can be shown in the same way as the fact
that $G$ maps bounded lower semi-analytic functions
to bounded lower semi-analytic functions in  Remark~\ref{R:G}.}

\begin{theorem}\label{T:Claim1}
The following holds:

(i)  The operator $G$ 
has a unique fixed point  $\bar \psi$ in $\mathrm{BLSA}([0,T]\times\Omega)$, which coincides with {\color{black} the value function} $V$ {\color{black} from \eqref{E:Cost:and:Value}}
on $(0,T]\times\Omega$.

(ii)  If $0=r_0<r_1<\ldots<r_N=T$ with
$r_{k+1}-r_k<\frac{1}{2}$  for all $k$ and if  $\psi\in \mathrm{BLSA}([0,T]\times\Omega)$, then
$$(G^n_{r_{k-1},r_k;\bar{\psi}})(\psi\vert_{[r_{k-1},r_k]\times\Omega})\to
\bar{\psi}\vert_{[r_{k-1},r_k]\times\Omega}$$ uniformly as $n\to\infty$,
where the operators $G_{r_{k-1},r_k;\bar{\psi}}$ are defined by \eqref{E:G:eta}.
 \end{theorem}
\begin{proof}
%
(i) \textit{Step~1 (construction of $\bar{\psi}$).} 
Consider a partition $0=r_0<r_1<\ldots<r_N=T$ of $[0,T]$ with $r_k-r_{k-1}<\frac{1}{2}$ for all $k$. 
Then $G_{r_{N-1},r_N;h}$ is a contraction {\color{black} on  $\mathrm{BLSA}([r_{N_1},r_N]\times\Omega)$. To see this,
note first that, for every $\alpha\in\mathcal{A}$, there is, by \eqref{E:G:eta}, Assumption~\ref{A:Q}, and the non-negativity 
of $\lambda$ (see Assumption~\ref{A:data:controlled}),
 some probability measure $\Prob$ with expected value $\Mean^\Prob$ such that, 
 {\color{black} for each $\psi_1$, $\psi_2 \in \mathrm{BLSA}([r_{N_1},r_N]\times\Omega)$,} 
\begin{align*}
[G_{r_{N-1},r_N;h,\alpha}\,\psi_1](s,x)-[G_{r_{N-1},r_N;h,\alpha}\,\psi_2](s,x)&\le \int_{r_{N-1}}^{r_N} 
\Mean^\Prob \norm{\psi_1-\psi_2}_\infty\,\dd t\\ &\le \frac{1}{2}\norm{\psi_1-\psi_2}_\infty.
\end{align*}
Thus, for every $(s,x)\in [r_{N-1},r_N]\times\Omega$ and $\eps>0$,  there is an $\alpha^\eps\in\mathcal{A}$ with
\begin{align*}
&[G_{r_{N-1},r_N;h}\,\psi_1](s,x)-[G_{r_{N-1},r_N;h}\,\psi_2](s,x)\\ &\qquad\le 
[G_{r_{N-1},r_N;h}\,\psi_1](s,x)-[G_{r_{N-1},r_N;h,\alpha^\eps}\,\psi_2](s,x)+\eps\\
&\qquad\le 
[G_{r_{N-1},r_N;h,\alpha^\eps}\,\psi_1](s,x)-[G_{r_{N-1},r_N;h,\alpha^\eps}\,\psi_2](s,x)+\eps
\le \frac{1}{2}\norm{\psi_1-\psi_2}_\infty+\eps.
\end{align*}}{\color{black} Since $G_{r_{N-1},r_N;h}$ is a contraction on  $\mathrm{BLSA}([r_{N_1},r_N]\times\Omega)$, it} 
 has a unique fixed point $\bar{\psi}_N{\color{black}\in\mathrm{BLSA}([r_{N_1},r_N]\times\Omega)}$.

{\color{black} Next, we show that  the value function $V^\prime$ defined by \eqref{E:Cost:and:Value} 
coincides with $\bar{\psi}_N$ on $[r_{N-1},r_N]\times\Omega$. Formally, this would immediately follow
from Theorem~\ref{0T:value:DPP} and the uniqueness of the fixed point $\bar{\psi}$.
Note  however that at this point, we have not
established yet that $V^\prime$ is lower semi-analytic.  Thus we cannot apply $G$
(or the related operators defined by \eqref{E:G:eta})
to $V^\prime$, as $G$ is an operator on $\mathrm{BLSA}([0,T]\times\Omega)$.
 Instead, we  use $G^\circ$ from \eqref{0E:T:value:DPP}, which can be applied to $V^\prime$, and obtain}
\begin{align*}
\norm{V^\prime\vert_{[r_{N-1},r_N]\times\Omega}-\bar{\psi}_N}_\infty&=
\norm{G^\circ V^\prime\vert_{[r_{N-1},r_N]\times\Omega}-G_{r_{N-1},r_N;h} \bar{\psi}_N}_\infty\\ &\le {\color{black}\frac{1}{2}}
\norm{V^\prime\vert_{[r_{N-1},r_N]\times\Omega}-\bar{\psi}_N}_\infty,
\end{align*}
i.e., $\bar{\psi}_N=V^\prime\vert_{[r_{N-1},r_N]\times\Omega}$ and thus
$V^\prime$ is lower semi-analytic on $[r_{N-1},r_N]\times\Omega$.

Similarly, $G_{r_{N-2},r_{N-1};\bar{\psi}_N}$ is a contraction
{\color{black} on  $\mathrm{BLSA}([r_{N-2},r_{N-1}]\times\Omega)$} and thus has
a unique fixed point $\bar{\psi}_{N-1}{\color{black}\in\mathrm{BLSA}([r_{N-2},r_{N-1}]\times\Omega)}$, which coincides with
$V^\prime$ on $[r_{N-2},r_{N-1}]\times\Omega$ thanks to 
{\color{black} Theorem~\ref{0T:value:DPP}.}

Iterating this process yields a finite sequence
$(\bar{\psi})_{k=1}^N$ with the properties that each $\bar{\psi}_k$ is the unique fixed point
of $G_{r_{k-1},r_k;\bar{\psi}_{k+1}}$ {\color{black} on  $\mathrm{BLSA}([r_{k-1},r_{k}]\times\Omega)$}, where we set  $\bar{\psi}_{N+1}(s,x):=h(x)$, and
that  $\bar{\psi}_k$  coincides with $V^\prime$ on $[r_{k-1},r_k]\times\Omega$ if $k>2$ and on 
$(0,r_1]\times\Omega$ if $k=1$.

Next, define $\bar{\psi}:[0,T]\times\Omega\to\R$ by
\begin{align*}
\bar{\psi}(s,x):=\begin{cases}
\bar{\psi}_k(s,x)&\text{ if $s\in [r_{k-1},r_k)$ for some $k\in\{1,\ldots,N\}$,}\\
h(x)&\text{ if $s=T$.}
\end{cases}
\end{align*}
Together with Theorem~\ref{0T:value:value}, we obtain $\bar{\psi}=V^\prime=V$ on $(0,T]\times\Omega$.

\medskip 

\textit{Step~2 ($\bar{\psi}$ is a fixed point of $G$).} Consider the function $\bar{v}:[0,T]\times\Omega\to\R$
defined by $\bar{v}(s,x):=[G\bar{\psi}](s,x)$. Then $\bar{v}=\bar{\psi}_N$ on $[r_{N-1},r_N]\times\Omega$. 

Now, let
$(s,x)\in [r_{N-2},r_{N-1})\times\Omega$. Then, by the \textcolor{black}{dynamic programming principle} 
(cf.~Proposition~III.2.5 in
\cite{BardiCapuzzoDolcetta})
 and the fixed point
property of $\bar{\psi}_{N-1}$, we have
\begin{align*}
\bar{v}(s,x){\color{black}=[(G_{0,T;h})\bar{\psi}](s,x)}
=[(G_{r_{N-2},r_{N-1};\bar{v}})\bar{\psi}_{N-1}](s,x)=\bar{\psi}_{N-1}(s,x).
\end{align*}
Iterating this procedure yields $\bar{v}=\bar{\psi}$, i.e., $\bar{\psi}$ is a fixed point of $G$.

{\color{black}
Another  way to verify the desired fixed point property of $\bar{\psi}$ would be to just note that, by Step~1,
$\bar{\psi}$ coincides with $V$ and 
 is lower semi-analytic. Then, together with Corollary~\ref{0C:value:DPP},
$V=GV$ follows. However, note this argument relies on the rather technical section~\ref{SS:value} 
unlike the argument in the preceding paragraphs.}

\medskip 

\textit{Step~3 ($\bar{\psi}$ is the unique fixed point of $G$).} Let $\bar{\psi}^\prime$ be a fixed point of $G$.
On $[r_{N-1},r_N]\times\Omega$, we have
$\bar{\psi}^\prime=G\bar{\psi}^\prime=(G_{r_{N-1},r_N;h})\bar{\psi}^\prime$.
Thus $\bar{\psi}^\prime$ coincides on $[r_{N-1},r_N]\times\Omega$ with $\bar{\psi}_N$, the unique 
fixed point of $G_{r_{N-1},r_N;h}$. 

Next, let $(s,x)\in [r_{N-2},r_{N-1}]\times\Omega$.
Then, by the \textcolor{black}{dynamic programming principle} (cf.~Proposition~III.2.5 in
\cite{BardiCapuzzoDolcetta}),
\begin{align*}
\bar{\psi}^\prime(s,x)&=[(G_{0,T;h})\bar{\psi}^\prime](s,x)=[(G_{r_{N-2},r_{N-1};\bar{\psi}^\prime})\bar{\psi}^\prime](s,x)
=[(G_{r_{N-2},r_{N-1};\bar{\psi}_N})\bar{\psi}^\prime](s,x).
\end{align*}
Hence, $\bar{\psi}^\prime$ coincides on $[r_{N-2},r_{N-1}]\times\Omega$ with $\bar{\psi}_{N-1}$, the unique 
fixed point of $G_{r_{N-2},r_{N-1};\bar{\psi}_N}$. Iterating this process yields $\bar{\psi}^\prime=\bar{\psi}$.

\medskip 

(ii)  The convergence results follow  from Step~1 of part~(i) of this proof.
\end{proof}

Next, we show that the fixed point $\bar{\psi}$   
is sufficiently regular.
{\color{black} 
To this end, we need the following auxiliary result,
whose 
 proof is relegated to subsection~\ref{S:A2}.
}

\begin{lemma}\label{L:barpsi:reg:Step0}
{\color{black} Let $c$, $c^\prime\in\R_+$, $0\le s_1<s_2\le T$, and
 $\psi$, $\eta\in \mathrm{BLSA}([0,T]\times\Omega)$. Let} 
\begin{equation}\label{E:Lip:L:barpsi:reg:Step0}
{\color{black}
\abs{\psi(s,x)-\psi(s,\tilde{x})}\le c\norm{x-\tilde{x}}_s} \text{ and }
\abs{\eta(s,x)-\eta(s,\tilde{x})}\le c^\prime\norm{x-\tilde{x}}_s 
\end{equation}
{\color{black}
for all  $s\in [s_1,s_2]$, $x$, $\tilde{x}\in\Omega$. 
Let $\norm{\eta}_\infty\le\norm{\bar{\psi}}_\infty$.}
 Recall $G_{s_1,s_2;\eta}$ from \eqref{E:G:eta}.
Then 
\begin{equation}
\begin{split}
\label{E:GpsiStab:xe}
&\abs{
{\color{black} [(G_{s_1,s_2;\eta})\psi](s,x)-[(G_{s_1,s_2;\eta})\psi](s,\tilde{x})}}
\\&\qquad\qquad\le
{\color{black}\left[c^\prime\ee^{L_f T}+6\check{L}(s_2-s)(1+c)(1+\norm{\psi}_\infty)\right]\norm{x-\tilde{x}}_s}
\end{split}
\end{equation}
for all $(s,x,\tilde{x})\in [s_1,s_2]\times\Omega\times\Omega$, {\color{black} where}
\begin{align*}
{\color{black} \check{L}:=\ee^{L_f T}\max\{L_f,L_f\,C_f, {\color{black} L_f\norm{\bar{\psi}}_\infty,}\,
C_f,C_\lambda\,L_Q,C_\lambda,1\}}.
\end{align*}
\end{lemma}

\begin{theorem}\label{L:barpsi:reg:Step1}
There exists a constant $L_{\bar{\psi}}\in\R_+$ such that,
for every $s\in [0,T]$ and every $x$, $\tilde{x}\in\Omega$,  we have
\begin{align}\label{E:Gpsi:Lip}
{\color{black}
\abs{
\bar{\psi}(s,x)-\bar{\psi}(s,\tilde{x})
}\le 
L_{\bar{\psi}}\norm{x-\tilde{x}}_{s}}.
\end{align}
\end{theorem}
\begin{proof}
First,  given $c^\prime\in\R_+$, define $(c_n(c^\prime))_{n\in\N_0}$ recursively by $c_0(c^\prime)=0$ and
\begin{align*}
c_{n+1}(c^\prime):=\left(c^\prime\ee^{L_fT} +\frac{1}{2}\right)+\frac{1}{2} c_n(c^\prime).
\end{align*} 
By mathematical induction,
\begin{align}\label{E:cn}
c_n(c^\prime)\le 2 c^\prime\ee^{L_fT}+1\text{ for all $n\in\N$.}
\end{align}

Next, recall the partition $0=r_0<r_1<\ldots<r_N=T$ from Step~1 of  the proof of Theorem~\ref{T:Claim1}~(i).
Note that $r_{j+1}-r_j<\frac{1}{2}$ for all $j$. 

Let $\psi_0\equiv 0$ and consider a subpartition $(s_{j,k})_{j=0,\ldots,{N-1};k=0,\ldots,M}$
with a sufficiently large integer $M$ such that, for each
$j\in\{1,\ldots,N\}$, we have $$r_{j-1}=s_{j-1,0}<s_{j-1,1}<\ldots<s_{j-1,M}=r_j$$ and, for each $k\in\{0,\ldots,M-1\}$, 
\begin{align}\label{E1:barpsi:reg:Step1}
s_{j,k+1}-s_{j,k} \le \frac{1}{12\check{L}{\color{black}\left(1+\sup_{n\in\N_0} 
\norm{(G^n_{r_{j-1},r_j;\bar{\psi}}) \psi_0}_\infty\right)}}=:\check{m}_j.
\end{align}

\medskip

\textit{Step~1.}
Let $n\in\N_0$. By \eqref{E1:barpsi:reg:Step1} and
 Lemma~\ref{L:barpsi:reg:Step0} applied to $[(G^n_{r_{N-1},r_N;\bar{\psi}}) \psi_0]$
 restricted to $[s_{N-1,M-1},s_{N-1,M}]\times\Omega$, 
 $c_n(L_f)$, $L_f$, $h$, $s_{N-1,M-1}$, $s_{N-1,M}$ 
 in place of $\psi$, $c$, $c^\prime$, $\eta$, $s_1$, $s_2$
 and noting that 
 \begin{align*}
 &(G_{s_{N-1,M-1},s_{N-1,M};h})(\psi_0\vert_{[s_{N-1,M-1},s_{N-1,M}]\times\Omega})\\&\qquad=
 [(G_{r_{N-1},r_N;\bar{\psi}})\psi_0)\vert_{[s_{N-1,M-1},s_{N-1,M}]\times\Omega},
 \end{align*}
  we can see that
\begin{align*}
&\abs{[G_{r_{N-1},r_N;\bar{\psi}}^{n+1}\psi_0](s,x)-[G_{r_{N-1},r_N;\bar{\psi}}^{n+1}\psi_0](s,\tilde{x})}\le
  \left(L_f
\ee^{L_fT}+\frac{1+c_n(L_f)}{2}\right)\norm{x-\tilde{x}}_s\\
&\qquad =c_{n+1}(L_f)\norm{x-\tilde{x}}_s\le (2L_f\ee^{L_fT}+1)\norm{x-\tilde{x}}_s
\text{ (by \eqref{E:cn})}
\end{align*}
for all $s\in [s_{N-1,M-1},s_{N-1,M}]$, $x$, $\tilde{x}\in\Omega$.

By Theorem~\ref{T:Claim1}~(ii), letting $n\to\infty$  yields \eqref{E:Gpsi:Lip}
 on $[s_{N-1,M-1},s_{N-1,M}]\times\Omega
\times\Omega$ 
 with $c^\prime_N:=2L_f\ee^{L_fT}+1$
in place of $L_{\bar{\psi}}$. 

\medskip

\textit{Step~2.} Let $n\in\N_0$.  
Without loss of generality, 
 assume that
$r_{N-1}\le s_{N-1,M-2}$.
Consequently, by \eqref{E1:barpsi:reg:Step1} and
 Lemma~\ref{L:barpsi:reg:Step0} applied to $[(G^n_{r_{N-1},r_N;\bar{\psi}}) \psi]$ 
 restricted to $[s_{N-1,M-2},s_{N-1,M-1}]\times\Omega$,
  $c_n(c^\prime_N)$,
 $c^\prime_N$, $\bar{\psi}$, $s_{N-1,M-2}$, $s_{N-1,M-1}$ in place 
 of $\psi$, $c$, $c^\prime$, $\eta$, $s_1$, $s_2$,
 we can with Step~1 of this proof deduce that
\begin{align*}
&\abs{[G_{r_{N-1},r_N;\bar{\psi}}^{n+1}\psi_0](s,x)-[G_{r_{N-1},r_N;\bar{\psi}}^{n+1}\psi_0](s,\tilde{x})}\le
  \left(c^\prime_N
\ee^{L_fT}+\frac{1+c_n(c^\prime_N)}{2}\right)\norm{x-\tilde{x}}_s\\
&\qquad =c_{n+1}(c^\prime_N)\norm{x-\tilde{x}}_s\le (2c^\prime_N\ee^{L_fT}+1)\norm{x-\tilde{x}}_s
\text{ (by \eqref{E:cn})}
\end{align*}
for all $s\in [s_{N-1,M-2},s_{N-1,M-1}]$, $x$, $\tilde{x}\in\Omega$.
By Theorem~\ref{T:Claim1}~(ii), letting $n\to\infty$  yields \eqref{E:Gpsi:Lip}
 on $[s_{N-1,M-2},s_{N-1,M-1}]\times\Omega
\times\Omega$ 
with 
$2c^\prime_N\ee^{L_fT}+1$
in place of $L_{\bar{\psi}}$. 

\medskip 

\textit{Step~3.} Repeating the procedure of Step~2 finitely many times and making
the obvious adjustments every time we pass to a different interval $[r_{k-1},r_k]$, we obtain
 the existence of some constant
$L_{\bar{\psi}}\in\R_+$ such that \eqref{E:Gpsi:Lip} holds on $[0,T]\times\Omega\times\Omega$.
\end{proof}

\setcounter{equation}{0}

\renewcommand{\theequation}{\thesection.\arabic{equation}}


\subsection{Existence and uniqueness}\label{SS:EU}

Recall  {\color{black} from  Theorem~\ref{T:Claim1}} the 
 fixed point $\bar{\psi}$ of the operator $G$. 

\begin{theorem}\label{T:fixedPoint:uniqueMinimax} 
$\bar{\psi}$ is the unique minimax solution of \eqref{E:PPDEinitial}.
\end{theorem}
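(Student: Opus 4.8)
The plan is to identify $\bar{\psi}$ as a minimax solution of \eqref{E:PPDEu} by combining the fixed-point structure from Theorem~\ref{T:Claim1} with the equation-level characterization from Theorem~\ref{T:EUpsi}, and then to obtain uniqueness by reducing any putative minimax solution of \eqref{E:PPDEu} to a fixed point of $G$. The starting observation is that $\bar{\psi}\in\mathrm{BLSA}([0,T]\times\Omega)$ by Theorem~\ref{T:Claim1}(i) (it coincides with $V$ on $(0,T]\times\Omega$ and is constructed via contractions inside $\mathrm{BLSA}$), and that $\bar{\psi}=\bar{\psi}(s,x)$ is Lipschitz in $x$ by Lemma~\ref{L:barpsi:reg:Step1}. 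So the two regularity requirements in Definition~\ref{D:mm:PPDEu} are already in hand, and what remains is to verify that $\bar{\psi}$ is a minimax super- and subsolution of \eqref{E:PPDEpsi} with $\psi=\bar{\psi}$.

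For existence, I would invoke Theorem~\ref{T:EUpsi}(b): since $\bar{\psi}$ is a fixed point of $G$, we have $G\bar{\psi}=\bar{\psi}$, so $\bar{\psi}$ is continuous (being in the range of $G$ applied to a continuous function — or, more directly, because $G\psi_0=\bar\psi$ for the continuous choice $\psi_0=\bar\psi$) and Lipschitz in $x$. Applying Theorem~\ref{T:EUpsi}(b) with $\psi_0=\bar{\psi}$ yields that $G\psi_0=\bar{\psi}$ is the unique minimax solution of \eqref{E:PPDEpsi} with $\psi=\psi_0=\bar{\psi}$, which is exactly the statement that $\bar{\psi}$ is a minimax solution of \eqref{E:PPDEu} in the sense of Definition~\ref{D:mm:PPDEu}(iii). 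A small point to check is the continuity of $\bar\psi$: one route is that $\bar\psi=V$ on $(0,T]\times\Omega$ together with $\bar\psi(T,\cdot)=h$, and $V$ is continuous by the regularity developed around Lemmas~\ref{L:barpsi:reg:Step0}–\ref{L:barpsi:reg:Step1} (Lipschitz in $x$ uniformly, and continuity in $s$ follows from the dynamic programming representation of $G$); alternatively one cites the proof technique of Theorem~7.4 in \cite{BK18JFA} as in the proof of Theorem~\ref{T:EUpsi}.

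For uniqueness, suppose $u$ is any minimax solution of \eqref{E:PPDEu}. By Definition~\ref{D:mm:PPDEu}, $u\in\mathrm{BLSA}([0,T]\times\Omega)$, $u$ is Lipschitz in $x$, and $u$ is simultaneously a minimax sub- and supersolution of \eqref{E:PPDEpsi} with $\psi=u$, i.e.\ a minimax solution of $\mathrm{TVP}_u$. I would argue on the last subinterval $[r_{N-1},r_N]$ of a partition with mesh $<\tfrac12$: on this interval, $\mathrm{TVP}_u$ restricted to $[r_{N-1},T]\times\Omega$ has $F_u$ depending on $u$ only through the ``frozen'' term $\int \psi(t,x\otimes_t e)\,\lambda\,Q$ with $\psi=u$; the uniqueness half of Theorem~\ref{T:EUpsi} (with $\psi_0=u$, which is continuous because any minimax solution is both LSC and USC, hence continuous) identifies the unique minimax solution of $\mathrm{TVP}_u$ as $G u$. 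Hence $u=Gu$ on $[r_{N-1},T]\times\Omega$; more precisely, restricting to this block, $u|_{[r_{N-1},r_N]\times\Omega}$ is the unique fixed point of $G_{r_{N-1},r_N;h}$, which is $\bar\psi_N=\bar\psi|_{[r_{N-1},r_N]\times\Omega}$. Then iterate backward exactly as in Step~3 of the proof of Theorem~\ref{T:Claim1}: having $u=\bar\psi$ on $[r_k,T]\times\Omega$, the minimax solution property on $[r_{k-1},r_k]\times\Omega$ together with Theorem~\ref{T:EUpsi} and the uniqueness of the fixed point of $G_{r_{k-1},r_k;\bar\psi_{k+1}}$ forces $u=\bar\psi_k=\bar\psi$ there. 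After finitely many steps, $u=\bar\psi$ on all of $[0,T]\times\Omega$.

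The main obstacle is bookkeeping rather than conceptual: one must make sure that ``minimax solution of \eqref{E:PPDEu}'' genuinely feeds into Theorem~\ref{T:EUpsi} on each subinterval, which requires (i) that the restriction of a minimax solution to a subinterval $[r_{k-1},r_k]\times\Omega$ is still a minimax solution of the localized problem with terminal data at $r_k$ given by $\bar\psi$ — this is the content of the dynamic-programming/localization argument used in Theorem~\ref{T:Claim1} and must be transcribed to the minimax-solution setting — and (ii) that continuity and Lipschitz-in-$x$ of $u$ are available so that Theorem~\ref{T:EUpsi}(b) applies with $\psi_0=u$. Both are essentially already established (continuity from USC $\cap$ LSC, Lipschitz from Definition~\ref{D:mm:PPDEu}, localization from the structure of $\mathrm{TVP}_\psi$), so the proof is a clean assembly of Theorems~\ref{T:Claim1}, \ref{T:EUpsi}, and Lemma~\ref{L:barpsi:reg:Step1}.
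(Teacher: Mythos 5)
Your proof is correct and follows essentially the same route as the paper: existence from the fixed-point identity $\bar\psi=G\bar\psi$ together with Theorem~\ref{T:EUpsi}, the Lipschitz regularity of $\bar\psi$ from Lemma~\ref{L:barpsi:reg:Step1}, and uniqueness by forcing any minimax solution to be a fixed point of $G$. Two small deviations are worth flagging. For existence you apply Theorem~\ref{T:EUpsi}(b) with $\psi_0=\bar\psi$, which obliges you to verify that $\bar\psi$ is continuous; the paper sidesteps this by applying part~(a) with $\psi_0=\bar\psi$, which only needs BLSA and Lipschitz-in-$x$, and then uses $G^2\bar\psi=G\bar\psi=\bar\psi$. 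Your continuity remark is salvageable (indeed $\bar\psi=G\bar\psi$ is the image under $G$ of a Lipschitz-in-$x$ function, which is continuous by the first line of the proof of Theorem~\ref{T:EUpsi}), but (a) is the cleaner hook here. For uniqueness, once you invoke Theorem~\ref{T:EUpsi}(b) with $\psi_0=u$ (using continuity of $u$ from $\mathrm{USC}\cap\mathrm{LSC}$) to conclude $u=Gu$ on all of $[0,T]\times\Omega$, the subinterval iteration is redundant: the uniqueness of the fixed point of $G$ in Theorem~\ref{T:Claim1}(i) already gives $u=\bar\psi$ directly, which is exactly how the paper closes the argument.
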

\begin{proof}
(i) Existence: By {\color{black}Theorem}~\ref{L:barpsi:reg:Step1}, $\bar{\psi}=\bar{\psi}(s,x)$ is Lipschitz continuous in $x$.
By Theorem \ref{T:Claim1}, $\bar{\psi}\in \mathrm{BLSA}([0,T]\times\Omega)$ and $\bar{\psi}=G\bar{\psi}=G^2\bar{\psi}$.
Thus, by Theorem~\ref{T:EUpsi}~(a), $\bar{\psi}$ is a minimax solution
of \eqref{E:PPDEpsi} with $\psi=\bar{\psi}$.
{\color{black} We can conclude that $\bar{\psi}$ is a minimax solution of \eqref{E:PPDEinitial}.}
  
  \medskip

(ii) Uniqueness: Let $u$ be a minimax solution of \eqref{E:PPDEinitial}.
Then $u$ is also a minimax solution of \eqref{E:PPDEpsi} with $\psi=u$. By Theorem~\ref{T:EUpsi}~(b),
$G u$ is the unique minimax solution of \eqref{E:PPDEpsi} with $\psi=u$.
Moreover, by Theorem~{\color{black}7.7}  in \cite{BK1}, $u=Gu$,  {\color{black} which} 
yields $u=\bar{\psi}$ according to Theorem~\ref{T:Claim1} {\color{black} (i)}. 
{\color{black} This concludes the proof.}
\end{proof}

 {\color{black} 
 The next result is an immediate consequence of  Theorems ~\ref{T:fixedPoint:uniqueMinimax}
 and ~\ref{T:Claim1}.
 \begin{cor}
 The value function $V$ from \eqref{E:Cost:and:Value} is the unique minimax solution of
 \eqref{E:PPDEinitial} on $(0,T]\times\Omega$.
 \end{cor}}


\subsection{The comparison principle}\label{SS:comparison}
Recall our notions of solution in Definition~\ref{D:mm:PPDEu}.

\begin{theorem}
Let $u_0$ be a minimax sub- and $v_0$ be a minimax supersolution of 
\eqref{E:PPDEinitial}. Then $u_0\le v_0$.
\end{theorem}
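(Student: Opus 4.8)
The comparison principle should follow from the characterization of minimax sub/supersolutions of $\eqref{E:PPDEu}$ via the operator $G$, exactly as in the Davis--Farid scheme. The key observation is that the abstract comparison result for the \emph{local} terminal-value problems $\eqref{E:PPDEpsi}$ (available from section~7 of \cite{BK1}) must be combined with the contraction/monotonicity structure of $G$ on a fine enough partition of $[0,T]$.

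\medskip

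First I would recall that, by Definition~\ref{D:mm:PPDEu}, $u_0\in\mathrm{BLSA}([0,T]\times\Omega)$ is Lipschitz in $x$ and is a minimax subsolution of $\eqref{E:PPDEpsi}$ with $\psi=u_0$; similarly $v_0$ is a minimax supersolution of $\eqref{E:PPDEpsi}$ with $\psi=v_0$. The first step is to upgrade both to \emph{equalities}: by Theorem~\ref{T:EUpsi}(b), $Gu_0$ is the unique minimax solution of $\eqref{E:PPDEpsi}$ with $\psi=u_0$, and Theorem~7.7 in \cite{BK1} gives $u_0=Gu_0$; likewise $v_0=Gv_0$. Hence both $u_0$ and $v_0$ are fixed points of $G$, and by Theorem~\ref{T:Claim1}(i) the fixed point of $G$ is unique, so in fact $u_0=\bar{\psi}=v_0$, and in particular $u_0\le v_0$.

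\medskip

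Alternatively, and perhaps more in the spirit of a genuine ``comparison'' argument that does not invoke full uniqueness, I would fix a partition $0=r_0<r_1<\cdots<r_N=T$ with $r_k-r_{k-1}<\tfrac12$ and argue by backward induction on $k$ that $u_0\le v_0$ on $[r_{k-1},r_k]\times\Omega$. On the last interval $[r_{N-1},r_N]\times\Omega$: since $u_0$ is a subsolution and $v_0$ a supersolution of $\eqref{E:PPDEpsi}$ with their respective $\psi$'s, and since on $[r_{N-1},T]$ the driving functional $F_\psi$ uses $\psi$ only through the terminal-type term $\int\psi(t,x\otimes_t e)\,\lambda\,Q(\dd e)$, I would compare first against the auxiliary problem with $\psi$ frozen, using the comparison principle from \cite{BK1} together with the fact that $a\mapsto\ell^{[G\psi]}$-type perturbations are monotone in $\psi$; combined with $u_0(T,\cdot)\le h\le v_0(T,\cdot)$ this yields $u_0\le v_0$ on $[r_{N-1},r_N]\times\Omega$. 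The induction step is identical after replacing the terminal condition $h$ by the (now known to be ordered) values $u_0(r_k,\cdot)\le v_0(r_k,\cdot)$, using the dynamic programming / fixed-point identities $u_0=(G_{r_{k-1},r_k;u_0})u_0$ and $v_0=(G_{r_{k-1},r_k;v_0})v_0$ from the reasoning in the proof of Theorem~\ref{T:Claim1}, together with monotonicity of $\psi\mapsto G_{r_{k-1},r_k;\eta}\psi$ and $\eta\mapsto G_{r_{k-1},r_k;\eta}\psi$ in the relevant arguments.

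\medskip

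\textbf{Main obstacle.} The delicate point is the \emph{nonlinear $\psi$-dependence}: $u_0$ is a sub/supersolution of $\eqref{E:PPDEpsi}$ with $\psi$ equal to $u_0$ itself (and similarly for $v_0$ with $\psi=v_0$), so one cannot directly invoke a comparison principle for a \emph{fixed} $\psi$. The resolution is exactly the Davis--Farid device already used above: pass through the operator $G$, whose monotonicity in both the ``inner'' argument $\psi$ and the ``terminal data'' $\eta$ lets one decouple the nonlocal self-reference, and whose contraction property on short time intervals (Theorem~\ref{T:Claim1}) propagates the ordering from the terminal time backward. In practice the cleanest write-up is the short one: $u_0$ and $v_0$ are both fixed points of $G$, uniqueness of the fixed point forces $u_0=v_0=\bar\psi$, hence $u_0\le v_0$. $\qed$
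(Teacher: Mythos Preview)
Your short argument has a genuine gap. From $u_0$ being a minimax \emph{sub}solution of \eqref{E:PPDEpsi} with $\psi=u_0$ you cannot conclude $u_0=Gu_0$. Theorem~7.7 in \cite{BK1} (and Theorem~\ref{T:EUpsi}) identify $Gu_0$ as the unique minimax \emph{solution} of \eqref{E:PPDEpsi} with $\psi=u_0$; comparison (Corollary~5.4 in \cite{BK1}) then yields only $u_0\le Gu_0$, not equality. The step ``$u_0=Gu_0$'' is precisely the step in the proof of Theorem~\ref{T:fixedPoint:uniqueMinimax} that uses that $u$ there is a \emph{solution}, i.e., simultaneously a sub- and a supersolution. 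If your argument were correct it would prove $u_0=v_0$, which is strictly stronger than comparison and false in general (there are many sub- and supersolutions that are not fixed points of $G$). Your second sketch inherits the same problem: the claimed identities $u_0=(G_{r_{k-1},r_k;u_0})u_0$ and $v_0=(G_{r_{k-1},r_k;v_0})v_0$ do not hold for mere sub/supersolutions, and the attempt to compare $u_0$ and $v_0$ directly via a comparison principle for a fixed $\psi$ is circular, since the two equations involve different $\psi$'s ($u_0$ versus $v_0$) and you do not yet know $u_0\le v_0$.

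The paper breaks this circularity by sandwiching against the \emph{common} fixed point $\bar\psi$ rather than by comparing $u_0$ and $v_0$ directly. On $[r_{N-1},r_N]$ one sets $u_{N,n}:=G^n_{r_{N-1},r_N;h}\,u_0$ and $v_{N,n}:=G^n_{r_{N-1},r_N;h}\,v_0$. Comparison for the \emph{local} problem with $\psi=u_0$ gives $u_0\le u_{N,1}$; then monotonicity of $\psi\mapsto F_\psi$ makes $u_{N,1}$ a subsolution of \eqref{E:PPDEpsi} with $\psi=u_{N,1}$, so by another application of local comparison $u_{N,1}\le u_{N,2}$, and inductively $(u_{N,n})_n$ is nondecreasing. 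Symmetrically $(v_{N,n})_n$ is nonincreasing. Crucially, Theorem~\ref{T:Claim1}~(ii) forces \emph{both} sequences to converge uniformly to $\bar\psi$ on $[r_{N-1},r_N]$, yielding $u_0\le\bar\psi\le v_0$ there; one then iterates backward over the partition. The missing ingredient in your proposal is exactly this monotone iteration toward the single object $\bar\psi$, which is what decouples the self-referential $\psi$-dependence.
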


\begin{proof}
We adapt the idea of the proof of Theorem 5.3 in
\cite{ColaneriEtAl20SPA} to our context.
Consider a partition $0=r_0<r_1<\ldots<r_N=T$ as in Theorem~\ref{T:Claim1}~(ii).
Also recall the operators defined in \eqref{E:G:eta}. 

\medskip 

\textit{Step~1.} First, define functions
\begin{align*}
u_{N,n}:[r_{N-1},r_N]\times\Omega\to\R, (t,x)\mapsto u_{N,n}(t,x):=[G^n_{r_{N-1},r_N;h}\,u_0](t,x),\\
v_{N,n}:[r_{N-1},r_N]\times\Omega\to\R, (t,x)\mapsto v_{N,n}(t,x):=[G^n_{r_{N-1},r_N;h}\,v_0](t,x).
\end{align*}
for each $n\in\N$. As $u_0$ and $v_0$ are semicontinuous, we can
with the same argument as in Theorem~\ref{T:EUpsi}~(b) deduce that
$u_{N,1}$  (resp.~$v_{N,1}$) is the unique minimax solution of  \eqref{E:PPDEpsi} on $[r_{N-1},r_N]\times\Omega$
with $\psi=u_0$ (resp.,~$\psi=v_0$).

Note that $u_0$ (resp., $v_0$) is also a minimax subsolution
(resp., supersolution)
 to \eqref{E:PPDEpsi} with $\psi=u_0$.
Thus,  by the comparison principle for \eqref{E:PPDEpsi} (Corollary~{\color{black}~5.4} in \cite{BK1}),
$u_0\le u_{N,1}$ and $v_{N,1}\le v_0$ on $[r_{N-1},r_N]\times\Omega$.

By induction, one can similarly show that $(u_{N,n})$ is non-decreasing and that $(v_{N,n})$ is non-increasing.
Moreover, by Theorem~\ref{T:Claim1}~(ii), $u_{N,n}\to \bar{\psi}$ and $v_{N,n}\to\bar{\psi}$
uniformly on $[r_{N-1},r_N]\times\Omega$ as $n\to\infty$. 
Thus $u_0\le \bar{\psi}\le v_0$ on $[r_{N-1},r_N]\times\Omega$.

\medskip 

\textit{Step~2.} Let us next consider the interval $[r_{N-2},r_N]$. We can proceed as in Step~1
and arrive at the corresponding conclusion but need
to replace $h$ by $\bar{\psi}$.

\medskip 

\textit{Step~3.} Iterating the steps above yields $u_0\le\bar{\psi}\le v_0$ on $[0,T]\times\Omega$.
\end{proof}

\setcounter{equation}{0}

\renewcommand{\theequation}{\thesection.\arabic{equation}}

\appendix

\section{A general path-dependent discrete-time infinite-horizon decision model}\label{S:Appendix:MDP}
We set up a discrete-time infinite-horizon decision model with data that depend on past states and past controls.
Then we  formulate a corresponding discrete-time infinite-horizon stationary decision model (see also chapter 10 of \cite{BS}
regarding a related procedure concerning non-stationary models).
Finally, we  establish part of the Bellman equation for the former model by using the latter model.
{\color{black}  Note that here the terms stationary and non-stationary have the same meaning as
in \cite{BS}.}

\subsection{A 
 model with data that depend 
on past states and past controls}\label{SS:Appendix:MDP:pathDep}
Our notation and terminology follows largely \cite{BS}.

Consider a model with the following data (cf.~Definition~8.1 on p.~188 in \cite{BS}):
\begin{itemize}
\item State space $S$, a non-empty Borel space, which also serves as disturbance space.
\item Control space $U$, a non-empty Borel space.
\item Disturbance kernel 
$$
p_{k}(\dd w\vert x_0,\ldots,x_k;u_0,\ldots,u_k)\quad \textup{at stage} \,\,k,\quad k\in\N_0,
$$  a Borel-mea\-sur\-able stochastic kernel
 on $S$ given $S^{k+1}\times U^{k+1}$.
 \item System function 
 $$(x,u,w)\mapsto w,
\quad S\times U\times S\to S.
$$
 \item Discount factor $1$.
 \item Cost function $$
g_k:S^{k+1}\times U^{k+1}\to\R_+\cup\{+\infty\} \quad \textup{at stage}\,\, k, \quad k\in\N_0,
$$ a \textcolor{black}{lower semi-analytic}~function.
\end{itemize}   

\begin{remark}
Maybe more suggestive (and more in line with \cite{BS}),  we can write that 
our system function maps the $k$-th disturbance $w_k$ to the $(k+1)$-st state $x_{k+1}$.
\end{remark}

Our goal is to minimize a certain expected cost over the space  $\Pi^\prime$ 
of all  \emph{randomized policies} $\pi=(\mu_0,\mu_1,\ldots)$,
where each 
$$\mu_k(\dd u_k\vert x_0,\ldots,x_k;u_0,\ldots,u_{k-1}),  \quad k\in\N_0,
$$
 is a universally measurable stochastic kernel on $U$ given
$S^{k+1}\times U^k$ 
(note that $\mu_0$ is a kernel on $U$ given $S$). 

{\color{black}Later we will also use the space $\Pi$  of all \emph{non-randomized policies} $\pi=(\mu_0,\mu_1,\ldots)\in\Pi^\prime$,
i.e., for every $k\in\N_0$, every $x_0$, $\ldots$, $x_k\in S$, and every $u_0$, $\ldots$, $u_{k-1}\in U$, the measure
$\mu_k(du_k\vert x_0,\ldots,x_k;u_0,\ldots u_{k-1})$ is a Dirac measure.}

\begin{remark}\label{R:randomizedPolicy:discrete}
We use randomized policies for two reasons. First, by doing so, we operate immediately in the setting of \cite{BS}, whose
results we need to use. Second, the {\color{black} natural} notation 
for non-randomized policies would become very cumbersome in our model. E.g.,
(realized) policies would then be of the form $\mu_0(x_0)$, $\mu_1(x_0,x_1;\mu_0(x_0))$, 
$\mu_2(x_0,x_1,x_2;\mu_0(x_0), \mu_1(x_0,x_1;\mu_0(x_0)))$, and so on. 
{\color{black} Using the notation for randomized policies, we have instead}
$\mu_0(\dd u_0\vert x_0)$, $\mu_1(\dd u_1\vert x_0,x_1;u_0)\,\mu_0(\dd u_0\vert x_0)$, $\ldots$,
\begin{align*}
\mu_{n+1}(\dd u_{n+1}\vert x_0,\ldots,x_{n+1}; u_0,\ldots,u_n)\cdots \mu_0(\dd u_0\vert x_0).
\end{align*}
 We consider the notation for the latter
(realized) policies to be slightly less heavy than the notation for the former ones.
\end{remark}

Given a policy $\pi=(\mu_0,\mu_1,\ldots)\in\Pi^\prime$ and a probability measure $p$ on $S$, define a probability measure
 $r(\pi,p)$ on $(S\times U)^{\N_0}$ via
marginals $r_N(\pi,p)$ on $(S\times U)^N$, $N\in\N$, that are recursively defined by

\begin{equation}\label{E:OriginalModel:Marginals}
\begin{split}
\int h_0\,\dd r_1(\pi,p)&:=\int_S \int_U h_0(x_0;u_0)\,\mu_0(\dd u_0\vert x_0)\,p(\dd x_0),\\
\int h_1\,\dd r_2(\pi,p)&:=\int_S \int_U \int_S \int_U h_1(x_0,x_1;u_0,u_1)\\
&\qquad\qquad \mu_1(\dd u_1\vert x_0,x_1;u_0)\,
p_0(\dd x_1\vert x_0,u_0)\\ &\qquad\qquad\mu_0(\dd u_0\vert x_0)\,p(\dd x_0),\\
\int h_{N}\,\dd r_{N+1}(\pi,p)&:=\int\Bigl[\int_S\int_U h_{N}(x_0,\ldots,x_{N}; u_0,\ldots,u_{N})\\
&\qquad\qquad \mu_{N}(\dd u_{N}\vert x_0,\ldots,x_{N};u_0,\ldots,u_{N-1})\\
&\qquad\qquad p_{N-1}(\dd x_{N}\vert x_0,\ldots,x_{N-1};u_0,\ldots,u_{N-1})
\Bigr]\\ &\qquad 
r_N(\pi,p)(\dd x_0\,\dd u_0\,\ldots\,\dd x_{N-1}\,\dd u_{N-1}),\,N\in\N,
\end{split}
\end{equation}
for each bounded universally measurable function $h_k:S^{k+1}\times U^{k+1}\to\R$, $k\in\N_0$.

Given a policy $\pi\in\Pi^\prime$ and a state $x\in S$,
define the cost
\begin{equation}\label{E:OriginalModel:Cost}
\begin{split}
J_\pi(x)&:=\int \sum_{k=0}^\infty g_k\,\dd r(\pi,\delta_x)\\
&=\sum_{k=0}^\infty \int g_k(x_0,\ldots,x_k;u_0,\ldots,u_k)\,r_{k+1}(\pi,\delta_x)(\dd x_0\,\dd u_0\,\ldots\, \dd x_k\,\dd u_k).
\end{split}
\end{equation}

Given a state $x\in S$, define the optimal cost
\begin{align}\label{E:AppB:OptimalCost}
J^\ast(x):=\inf_{\pi\in\Pi^\prime} J_\pi(x).
\end{align}

We also use the spaces
$\Pi^{k}$, $k\in\N_0$, of all  policies $\pi^{k}=(\mu_{k},\mu_{k+1},\ldots)$,
where each 
$$
\mu_j(\dd u_j\vert x_0,\ldots,x_j;u_0,\ldots,u_{j-1}),\quad j\in\{k,k+1,\ldots\},
$$ is 
a universally measurable stochastic kernel on $U$ given $S^{j+1}\times U^j$ (cf.~chapter~10 of \cite{BS}).
Note that $\Pi^\prime=\Pi^0$.

Next, given $k\in\N_0$, a policy $\pi^k=(\mu_k,\mu_{k+1},\ldots)\in\Pi^k$, and a probability measure $p^k$ on $S^{k+1}\times U^k$,
define a probability measure  
$r^k(\pi^k,p^k)$ 
on $(S\times U)^{\N_0}$ via
marginals  $r^k_N(\pi^k,p^k)$ 
on $(S\times U)^{N}$, $N\in\{k+1,k+2,\ldots\}$, that are recursively defined by


\begin{equation}\label{E:OriginalModel:Marginals:kOriginating}
\begin{split}
\int h_k\,\dd r^k_{k+1}(\pi^k,p^k) &:=
\int_{S^{k+1}\times U^k} \int_U h_k(x_0,\ldots,x_k;u_0,\ldots,u_k)\, \\
&\qquad\qquad\mu_k(\dd u_k\vert x_0,\ldots,x_k;u_0,\ldots,u_{k-1})\\
&\qquad\qquad p^k(\dd x_0\,\ldots\, \dd x_k\,{\color{black}\dd u _0 \ldots\,\dd u_{k-1}}),\\
\int h_{N}\,\dd r^k_{N+1}(\pi^k,p^k)&:=\int\Big[
\int_S\int_U h_{N}(x_0,\ldots,x_{N};u_0,\ldots,u_{N})\\
&\qquad\qquad \mu_{N}(\dd u_{N}\vert x_0,\ldots,x_{N};u_0,\ldots,u_{N-1})\\
&\qquad\qquad 
p_{N-1}(\dd x_{N}\vert x_0,\ldots,x_{N-1};u_0,\ldots,u_{N-1})
\Big]\\
&\, r^k_N(\pi^k,p^k)(\dd x_0\,\dd u_0\,\ldots\,\dd x_{N-1}\,\dd u_{N-1})
\end{split}
\end{equation}
for every bounded universally measurable function $h_j:S^{j+1}\times U^{j+1}\to\R$,
$j\in\{k,k+1,\ldots\}$.

Given $k\in\N_0$,  $\pi^k\in\Pi^k$,  $x_0$, $\ldots$, $x_k\in S$, and
$u_0$, $\ldots$, $u_{k-1}\in U$,
define the cost
\begin{align}\label{E:OriginalModel:Cost:kOriginating}
J_{\pi^k}(k;x_0,\ldots,x_k;u_0,\ldots,u_{k-1})&:=\int \sum_{j=k}^\infty g_j
\,\dd r^k(\pi^k,\delta_{(x_0,\ldots,x_k;u_0,\ldots,u_{k-1})}).
\end{align}

Given $k\in\N_0$, $x_0$, $\ldots$, $x_k\in S$, and
$u_0$, $\ldots$, $u_{k-1}\in U$, define the optimal cost
\begin{align}\label{E:AppB:kOrig:OptimCost}
J^\ast(k;x_0,\ldots,x_k;u_0,\ldots,u_{k-1})):=\inf_{\pi^k\in\Pi^k} J_{\pi^k}(k;x_0,\ldots,x_k;u_0,\ldots,u_{k-1}).
\end{align}

Our goal is to establish the following  special case of the Bellman equation
(this is a counterpart of Proposition~10.1 on p.~246 in \cite{BS}). 

\begin{theorem}\label{T:OneStepDPP}
Let $x\in S$. Let $J^\ast$ and  $J^\ast(1;\cdot)$ be the functions  defined respectively  by   \eqref{E:AppB:OptimalCost} and \eqref{E:AppB:kOrig:OptimCost} with $k=1$. Then
\begin{align}\label{E:T:OneSteppDPP}
J^\ast(x)=\inf_{u\in U} \Bigl\{
g_0(x,u)+\int_S  J^\ast(1;x,x_1;u) \,p_0(\dd x_1\vert x,u)
\Bigr\}.
\end{align}
\end{theorem}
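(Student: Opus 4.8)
The plan is to reduce the path-dependent model of Section~\ref{SS:Appendix:MDP:pathDep} to an associated \emph{stationary} decision model and then read off the asserted identity from the (standard) Bellman optimality equation for the latter. Concretely, I would first introduce the history spaces $H_k:=S^{k+1}\times U^{k}$ (so $H_0=S$), set $\hat S:=\bigsqcup_{k\ge 0}H_k$ — a Borel space, being a countable disjoint union of Borel spaces — and keep the control space $\hat U:=U$. The stationary disturbance kernel $\hat p$ on $\hat S$ given $\hat S\times\hat U$ maps a state $(x_0,\ldots,x_k;u_0,\ldots,u_{k-1})\in H_k$ and a control $u_k$ to the law of $(x_0,\ldots,x_k,w;u_0,\ldots,u_k)\in H_{k+1}$, where $w\sim p_k(\,\cdot\mid x_0,\ldots,x_k;u_0,\ldots,u_k)$; the stationary cost is $\hat g\bigl((x_0,\ldots,x_k;u_0,\ldots,u_{k-1}),u_k\bigr):=g_k(x_0,\ldots,x_k;u_0,\ldots,u_k)$, which is lower semi-analytic since each $g_k$ is and $\hat S$ is a disjoint union; the discount factor is $1$. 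As $\hat g\ge 0$, this is a nonnegative-cost lower-semianalytic model in the sense of \cite{BS}.

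Next I would match up policies and cost functionals. A universally measurable policy for the stationary model, run from an initial state in $H_0$, is precisely a policy $\pi\in\Pi^\prime$ of the original model, while run from an initial state in $H_1$ it corresponds to a policy $\pi^1\in\Pi^1$. An induction on the marginals — comparing \eqref{E:OriginalModel:Marginals} (resp.\ \eqref{E:OriginalModel:Marginals:kOriginating} with $k=1$) with the marginals of the stationary model — shows that the stationary total-cost functional restricts to $J_\pi(x)$ on $H_0$ and to $J_{\pi^1}(1;x_0,x_1;u_0)$ on $H_1$. Passing to the infimum over policies, the stationary optimal cost function $\hat J^\ast$ therefore satisfies $\hat J^\ast|_{H_0}=J^\ast$ and $\hat J^\ast|_{H_1}=J^\ast(1;\cdot)$; in particular both $J^\ast$ and $J^\ast(1;\cdot)$ are lower semi-analytic.

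It then remains to invoke the Bellman optimality equation for the stationary model (the stationary analogue of the result referred to in the statement, cf.\ Proposition~10.1 in \cite{BS}), which, because the costs are nonnegative, holds at every $\hat x\in\hat S$:
\[
\hat J^\ast(\hat x)=\inf_{u\in U}\Bigl\{\hat g(\hat x,u)+\int_{\hat S}\hat J^\ast\,\dd\hat p(\,\cdot\mid\hat x,u)\Bigr\}.
\]
Specializing to $\hat x=x\in H_0=S$ and noting that $\hat p(\,\cdot\mid x,u)$ is the image of $p_0(\,\cdot\mid x,u)$ under the map $x_1\mapsto(x,x_1;u)\in H_1$, the right-hand side equals $\inf_{u\in U}\bigl\{g_0(x,u)+\int_S J^\ast(1;x,x_1;u)\,p_0(\dd x_1\mid x,u)\bigr\}$, which is exactly \eqref{E:T:OneSteppDPP}.

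The main obstacle, I expect, is the second step: verifying that the stationary model just described is a bona fide model in the framework of \cite{BS} (Borel state space, Borel-measurable disturbance kernel, lower semi-analytic cost) and that its value function restricts \emph{exactly} to $J^\ast$ and $J^\ast(1;\cdot)$. The recursive bookkeeping identifying the stationary marginals with $r_N(\pi,\delta_x)$ and with $r^1_N(\pi^1,\cdot)$ is conceptually routine but notationally delicate, and it is there that the measurability subtleties (preservation of lower semi-analyticity under partial minimization and integration against Borel stochastic kernels, together with the accompanying measurable-selection arguments) have to be dealt with — which is exactly why it pays to route the proof through the machinery of \cite{BS} rather than re-derive it by hand.
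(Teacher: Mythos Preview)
Your proposal is correct and follows the same strategy as the paper: embed the path-dependent model into an auxiliary \emph{stationary} model, apply the Bellman equation for the latter (the paper uses Proposition~9.8 of \cite{BS}, the nonnegative-cost stationary case), and then read off \eqref{E:T:OneSteppDPP} by identifying the stationary optimal cost with $J^\ast$ on the ``stage-$0$'' slice and with $J^\ast(1;\cdot)$ on the ``stage-$1$'' slice.

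The only difference is the choice of stationary state space. The paper takes $\mathbf{S}=\N_0\times S^{\N_0}\times U^{\N_0}$ and encodes a stage-$k$ history by padding the infinite sequences with dummy values $x$ and $u_\Delta$; you instead take the countable disjoint union $\hat S=\bigsqcup_{k\ge 0}H_k$ with $H_k=S^{k+1}\times U^k$, so that the state literally \emph{is} the history and no padding is needed. Your variant is a bit cleaner and makes the identifications $\hat J^\ast|_{H_0}=J^\ast$ and $\hat J^\ast|_{H_1}=J^\ast(1;\cdot)$ more transparent; the paper's variant stays closer to the product-space setting of \cite{BS} and therefore has to spell out the correspondence via four explicit claims (for each direction and each of the two slices, construct a policy in one model from a policy in the other and check that the costs agree). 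That four-claim argument is exactly the ``notationally delicate bookkeeping'' you correctly flag as the main work. One small caveat on your side: a policy for the stationary model is not literally an element of $\Pi^\prime$, since it may depend on the full stationary history $(\hat x_0,\ldots,\hat x_k;u_0,\ldots,u_{k-1})$; but along any trajectory started in $H_0$ the stationary state $\hat x_k\in H_k$ already determines all earlier stationary states and controls, so the two policy classes induce the same cost functionals, which is all you need.
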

\begin{proof}
See section~\ref{SS:T:OneStepDPP}.
\end{proof}

Moreover, we show that minimization over non-randomized policies{\color{black}, i.e., over $\Pi$,}  is equivalent to
minimization over randomized policies{\color{black}, i.e., over $\Pi^\prime$}. 
We want to emphasize that we still work with the data introduced at the beginning of this section
(section~\ref{SS:Appendix:MDP:pathDep}). In particular, non-negativity of the  cost functions is important. This will enable us to apply
corresponding results in \cite{BS}.

\begin{theorem}\label{T:AppendixB:NonRandomized}
Let $x\in S$. Let $J^\ast$ and $J_\pi$  be the functions defined respectively by \eqref{E:AppB:OptimalCost} 
and \eqref{E:OriginalModel:Cost}. Then
\begin{align}\label{E:T:AppendixB:NonRandomizedMain}
J^\ast(x)=\inf_{\pi\in\Pi} J_\pi(x).
\end{align}
\end{theorem}
\begin{proof}
See section~\ref{SS:T:AppendixB:NonRandomized}.
\end{proof}

\subsection{A corresponding 
stationary 
 model}
Using the data from the previous subsection, consider a model with the following data:
\begin{itemize}
\item State space $\mathbf{S}:=\N_0\times S^{\N_0}\times U^{\N_0}${\color{black}, which is
a non-empty Borel space (see, e.g., Proposition~7.13 in \cite{BS}).} A typical state is denoted by
$$\mathbf{x}=(k;x^0,x^1,\ldots;u^0,u^1,\ldots)
$$ and $\mathbf{x}$ is to be understood of that form below.
\item Control space $U${\color{black}, which is a non-empty Borel space.}
\item Disturbance space $S${\color{black}, which is a non-empty Borel space.}
\item Disturbance kernel $$
\mathbf{p}(\dd w\vert \mathbf{x},u):=p_k(\dd w\vert x^0,\ldots,x^k;u^0,\ldots,u^{k-1},u),$$
{\color{black} which is a Borel-measurable stochastic kernel} on {\color{black} $S$}  given $\mathbf{S}\times U$.

\item System function $\mathbf{f}:\mathbf{S}\times U\times S\to\mathbf{S}$ defined by
\begin{align*}
\mathbf{f}(\mathbf{x},u,w):=(k+1;x^0,\ldots,x^k,w,x^{k+2},x^{k+3},\ldots;u^0,\ldots,u^{k-1},u,u^{k+1},u^{k+2},\ldots),
\end{align*}
{\color{black} which is a Borel measurable function.}
\item Discount factor $1$.
\item Cost function $\mathbf{g}:\mathbf{S}\times U\to\R_+$ defined by
\begin{align*}
\mathbf{g}(\mathbf{x},u):=g_k(x^0,\ldots,x^k;u^0,\ldots,u^{k-1},u),
\end{align*}
{\color{black} which is a lower semi-analytic function.}
\end{itemize}
{\color{black}
\begin{remark}
Note that our data satisfy all assumptions of the infinite-horizon model
 in Chapter 9 of \cite{BS}. This allows us to to apply the results of \cite{BS} later 
 in sections~\ref{SS:T:OneStepDPP} and \ref{SS:T:AppendixB:NonRandomized}.
\end{remark}}

The corresponding state transition kernel  $\mathbf{t}$ 
on $\mathbf{S}$ 
given $\mathbf{S}\times U$ is (as on~p.~189 in \cite{BS}) defined by
\begin{equation}\label{E:transition}
\begin{split}
\mathbf{t}(\underline{\mathbf{S}}\vert\mathbf{x},u)&:=\mathbf{p}(\{w\in S:\,\mathbf{f}(\mathbf{x},u,w)\in\underline{\mathbf{S}}\}\vert
\mathbf{x},u)\\
&=\delta_{k+1}(\underline{E})\cdot\prod_{j=0}^k \delta_{x^j}(\underline{F}_j)\cdot 
p_k(\underline{F}_{k+1}\vert x^0,\ldots,x^k;u^0,\ldots,u^{k-1},u)
\\&\qquad\qquad\cdot\prod_{j={k+2}}^\infty \delta_{x^j}(\underline{F}_j)\cdot
\prod_{j=0}^{k-1} \delta_{u^j}(\underline{G}_j)\cdot\delta_u (\underline{G}_k)\cdot
\prod_{j=k+1}^\infty \delta_{u^j}(\underline{G}_j),
\end{split}
\end{equation}
for each  
$\underline{\mathbf{S}}=\underline{E}\times\prod_{j=0}^\infty \underline{F}_j\times\prod_{j=0}^\infty \underline{G}_j\in\mathcal{B}(\mathbf{S})$, 
$\mathbf{x}=(k;x^0,x^1,\ldots;u^0,u^1,\ldots)\in\mathbf{S}$, and 
$u\in U$.

The set of policies, denoted by $\vec{\Pi}^\prime$, consists of all $\vec{\pi}=(\vec{\mu}_0,\vec{\mu}_1,\ldots)$,
where each $$\vec{\mu}_k(\dd u_k\vert \mathbf{x}_0,\,\ldots,\,\mathbf{x}_k;\,u_0,\,\ldots,u_{k-1}), \quad k\in\N_0, 
$$
is
a universally measurable stochastic kernel on $U$ given $\mathbf{S}^{k+1}\times U^k$. 


For each policy $\vec{\pi}=(\vec{\mu}_0,\vec{\mu}_1,\ldots)\in\vec{\Pi}^\prime$ and each probability measure $\mathbf{p}$ on $\mathbf{S}$,
define a probability measure $\mathbf{r}(\vec{\pi},\mathbf{p})$ on $(\mathbf{S}\times U)^{\N_0}$ via marginal distributions
$\mathbf{r}_N(\vec{\pi},\mathbf{p})$  on $(\mathbf{S}\times U)^N$, $N\in\N$, by
\begin{equation}\label{E:Augmented:Prob}
\begin{split}
\int\mathbf{h}_0\,\dd \mathbf{r}_1(\vec{\pi},\mathbf{p})
&:=\int_{\mathbf{S}}\int_U \mathbf{h}_0(\mathbf{x}_0;u_0)\,\vec{\mu}_0(\dd u_0\vert\mathbf{x}_0)\,\mathbf{p}(\dd \mathbf{x}_0),\\
\int\mathbf{h}_{N+1}\,\dd\mathbf{r}_{N+1}(\vec{\pi},\mathbf{p})
&:=\int\Biggl[
\int_{\mathbf{S}}\int_U
\mathbf{h}_{N}(\mathbf{x}_0,\ldots,\mathbf{x}_{N}; u_0,\ldots,u_{N})\\
&\qquad\qquad \vec{\mu}_{N}(\dd u_{N}\vert \mathbf{x}_0,\ldots,\mathbf{x}_{N};u_0,\ldots,u_{N-1})\\
&\qquad\qquad \mathbf{t}(\dd\mathbf{x}_{N}\vert \mathbf{x}_{N-1},u_{N-1})
\Biggr]\\
&\qquad \mathbf{r}_N(\vec{\pi},\mathbf{p})(\dd\mathbf{x}_0\,\dd u_0\,\ldots\,\dd\mathbf{x}_{N-1}\,\dd u_{N-1}),\,N\in\N,
\end{split}
\end{equation}
for every bounded universally measurable function $\mathbf{h}_k:\mathbf{S}^{k+1}\times U^{k+1}\to\R$, $k\in\N_0$.
\bibliographystyle{amsplain}

Now, we can define the cost functions $\mathbf{J}_{\vec{\pi}}:\mathbf{S}\to\R$, $\vec{\pi}\in\vec{\Pi}^\prime$, 
and the optimal cost function $\mathbf{J}:\mathbf{S}\to\R$ by
\begin{equation}\label{E:Augmented:Cost}
\begin{split}
\mathbf{J}_{\vec{\pi}}(\mathbf{x})&:=\sum_{k=0}^\infty\int \mathbf{g}(\mathbf{x}_k,u_k)\,
\mathbf{r}_{k+1}(\vec{\pi},\delta_{\mathbf{x}})(\dd\mathbf{x}_0\,\dd u_0\,\ldots\,\dd\mathbf{x}_k\,\dd u_k),\\
\mathbf{J}^\ast(\mathbf{x})&:=\inf_{\vec{\pi}\in\vec{\Pi}^\prime}\mathbf{J}_{\vec{\pi}}(\mathbf{x}).
\end{split}
\end{equation} 

{\color{black}
\begin{remark}
As an alternative to the stationary model introduced above, one could consider instead 
of our state space  $\mathbf{S}=\N_0\times S^{\N_0}\times U^{\N_0}$ the set
\begin{align*}
\bigcup_{k\in\N_0} \{(k;x^0,\ldots,x^k; u^0,\ldots,u^{k-1}):\,
(x^0,\ldots,x^k)\in S^{k+1},\,(u^0,\ldots,u^{k-1})\in U^k\}
\end{align*}
as state space and then appropriately adjust the remaining parts of the model
 such as the  disturbance kernel,
the system function, and so on. Doing so would be more in line with
section~10.1 of \cite{BS}, where one can find a similar procedure of transforming
a non-stationary to a stationary model.
\end{remark}
}

\subsection{Proof of Theorem~\ref{T:OneStepDPP}.}\label{SS:T:OneStepDPP}
Recall that $\mathbf{g}$ is non-negative. Thus, by Proposition~9.8 on p.~225 in \cite{BS} and Definition~8.5 on p.~195 in \cite{BS}, we have
\begin{align}\label{E:MDP:Bellman}
\mathbf{J}^\ast(\mathbf{x})=\inf_{u\in U}\Bigl\{\mathbf{g}(\mathbf{x},u)+\int \mathbf{J}^\ast(\mathbf{x}^\prime)\,\mathbf{t}(\dd\mathbf{x}^\prime\vert
\mathbf{x},u)\Bigr\}
\end{align}
for every $\mathbf{x}\in\mathbf{S}$. 

Now, fix $x\in S$ and some $u_\Delta\in U$, and  let $\mathbf{x}=(0;x,x\ldots;u_\Delta,u_\Delta,\ldots)$. By \eqref{E:MDP:Bellman},
\begin{align*}
\mathbf{J}^\ast(\mathbf{x})=\inf_{u\in U}\Bigl\{
g_0(x;u)+\int \mathbf{J}^\ast(1;x,w,x,x,\ldots;u,u_\Delta,u_\Delta,\ldots)\,p_0(\dd w\vert x,u)
\Bigr\}.
\end{align*}
Now, fix also $u\in U$ and $w\in S$. It remains to show that
\begin{equation}\label{E:MDP:Correspondence}
\begin{split}
\mathbf{J}^\ast(0;x,x,\ldots;u_\Delta,u_\Delta,\ldots)&=J^\ast(x)\quad\text{and}\\
\mathbf{J}^\ast(1;x,w,x,x,\ldots;u,u_\Delta,u_\Delta,\ldots)&=J^\ast(1;x,w;u),
\end{split}
\end{equation}
which we will do by proving the following four claims. 

\textit{Claim 1:} $\mathbf{J}^\ast(0;x,x,\ldots;u_\Delta,u_\Delta,\ldots)\ge J^\ast(x)$.

\textit{Claim 2:} $\mathbf{J}^\ast(1;x,w,x,x,\ldots;u,u_\Delta,u_\Delta,\ldots)\ge J^\ast(1;x,w;u)$.

\textit{Claim~3:} $\mathbf{J}^\ast(0;x,x,\ldots;u_\Delta,u_\Delta,\ldots)\le J^\ast(x)$.

\textit{Claim~4:} $\mathbf{J}^\ast(1;x,w,x,x,\ldots;u,u_\Delta,u_\Delta,\ldots)\le J^\ast(1;x,w;u)$.

\medskip 

Note that {\color{black}in the proofs of these claims,}
 we will use the following notational conventions for the components of elements 
$(\mathbf{x}_0,\mathbf{x}_1,\ldots)$
of $\mathbf{S}^{\N_0}$:
\begin{equation}\label{E:MDP:component:notation}
\mathbf{x}_k=(\iota_k;x^0_k,x^1_k,\ldots;u^0_k,u^1_k,\ldots),\quad k\in\N_0,
\end{equation}
{\color{black} where $\iota_k\in\N_0$ denotes the first component of $\mathbf{x}_k$,
$x^0_k\in S$ the second one, and so on.}

\textit{Proof of Claim~1.} Fix a policy 
$\vec{\pi}=(\vec{\mu}_0,\vec{\mu}_1,\ldots)\in\vec{\Pi}^\prime$. Next, define a
policy $\pi=(\mu_0,\mu_1,\ldots)\in\Pi^\prime$  as follows.
For every $k\in\N_0$, put
\begin{align*}
&\mu_k(\dd u_k\vert x_0,\ldots,x_k;u_0,\ldots,u_{k-1})\\&:=
\vec{\mu}_k(\dd u_k\vert (0;x,x,\ldots;u_\Delta,u_\Delta,\ldots),
(1;x,x_1,x,x,\ldots;u_0,u_\Delta,u_\Delta,\ldots),\\
&\qquad\qquad (2;x,x_1,x_2,x,x,\ldots;u_0,u_1,u_\Delta,u_\Delta,\ldots), 
\ldots,\\
&\qquad\qquad (k;x,x_1,\ldots,x_k,x,x,\ldots;u_0,\ldots,u_{k-1},u_\Delta,u_\Delta,\ldots);
u_0,\ldots,u_{k-1}).
\end{align*}
By \eqref{E:Augmented:Cost}, \eqref{E:Augmented:Prob} (cf.~also (4) on p.~191 in \cite{BS}),  \eqref{E:transition}, and \eqref{E:OriginalModel:Cost},
\begin{align*}
\mathbf{J}_{\vec{\pi}}(\mathbf{x})
&=\sum_{k=0}^\infty \int_{\mathbf{S}}\int_U\int_{\mathbf{S}}\int_U\cdots\int_{\mathbf{S}}\int_U \mathbf{g}(\mathbf{x}_k,u_k)\,
\vec{\mu}_k(\dd u_k\vert\mathbf{x}_0,\ldots,\mathbf{x}_k;u_0,\ldots, u_{k-1})\\
&
\qquad\mathbf{t}(\dd \mathbf{x}_k\vert \mathbf{x}_{k-1},u_{k-1})\cdots\vec{\mu}_1(\dd u_1\vert \mathbf{x}_0,\mathbf{x}_1;u_0)\,
\mathbf{t}(\dd \mathbf{x}_1\vert\mathbf{x}_0,u_0)\,
\vec{\mu}_0(\dd u_0\vert\mathbf{x}_0)\,\delta_{\mathbf{x}}(\dd \mathbf{x}_0)\\
&=\sum_{k=0}^\infty 
\int_U\int_{S}\int_U\cdots\int_{S}\int_U
g_k(x,x^1_1,\ldots,x^k_k; u_0,\ldots,u_{k-1},u_k)\\
&\qquad \vec{\mu}_k(\dd u_k\vert
 (0;x,x,\ldots;u_\Delta,u_\Delta,\ldots),
(1;x,x_1,x,x,\ldots;u_0,u_\Delta,u_\Delta,\ldots),\\
&\qquad\qquad (2;x,x^1_1,x^2_2,x,x,\ldots;u_0,u_1,u_\Delta,u_\Delta,\ldots), 
\ldots,\\
&\qquad\qquad (k;x,x^1_1,\ldots,x^k_k,x,x,\ldots;u_0,\ldots,u_{k-1},u_\Delta,u_\Delta,\ldots);
u_0,\ldots,u_{k-1})\\
&\qquad p_{k-1}(\dd x^k_k\vert  x,x^1_1,\ldots,x^{k-1}_{k-1};u_0,\ldots,u_{k-1}) 
\cdots\\
&\qquad \vec{\mu}_1(\dd u_1\vert  (0;x,x\ldots;u_\Delta,u_\Delta,\ldots), (1;x,x_1^1,x,x,\ldots;u_0,u_\Delta,u_\Delta,\ldots);u_0)\\
&\qquad
p_0(\dd x_1^1\vert x,u_0) 
\,\vec{\mu}_0(\dd u_0\vert (0;x,x,\ldots;u_\Delta,u_\Delta,\ldots))\\
&=J_\pi(x).
\end{align*}
This yields Claim~1.

\medskip 

\textit{Proof of Claim~2.}   Fix a policy 
$\vec{\pi}=(\vec{\mu}_0,\vec{\mu}_1,\ldots)\in\vec{\Pi}^\prime$ and
define  a policy $\pi^1=(\nu_1,\nu_2,\ldots)\in\Pi^1$ as follows.
For every $k\in\N_0$, put
\begin{align*}
&\nu_{k+1}(\dd u_{k+1}\vert x_0,\ldots,x_{k+1};u_0,\ldots,u_k)\\
&:=\vec{\mu}_k(\dd u_{k+1}\vert (1;x,w,x,x,\ldots;u,u_\Delta,u_\Delta,\ldots),
(2;x,w,x_2,x,x,\ldots;u,u_1,u_\Delta,u_\Delta,\ldots),\\
&\qquad (3;x,w,x_2,x_3,x,x,\ldots;u,u_1,u_2,u_\Delta,u_\Delta,\ldots),
\ldots,\\
&\qquad (k+1;x,w, x_2,\ldots,x_{k+1},x,x,\ldots; u,u_1,\ldots,u_k,u_\Delta,u_\Delta,\ldots);
u_1,\ldots,u_k).
\end{align*}
such that
\begin{align*}
&\nu_{k+1}(\dd u_k\vert x,w,x_2,\ldots,x_{k+1};u,u_0,\ldots,u_{k-1})\\
&=\vec{\mu}_k(\dd u_k\vert (1;x,w,x,x,\ldots;u,u_\Delta,u_\Delta,\ldots),
(2;x,w,x_2,x,x,\ldots;u,u_0,u_\Delta,u_\Delta,\ldots),\\
&\qquad (3;x,w,x_2,x_3,x,x,\ldots;u,u_0,u_1,u_\Delta,u_\Delta,\ldots), 
\ldots,\\
&\qquad (k+1;x,w, x_2,\ldots,x_{k+1},x,x,\ldots; u,u_0,\ldots,u_{k-1},u_\Delta,u_\Delta,\ldots);
u_0,\ldots,u_{k-1}).
\end{align*}
Hence, by \eqref{E:Augmented:Cost}, \eqref{E:Augmented:Prob},  \eqref{E:transition},  and
\eqref{E:OriginalModel:Cost:kOriginating},
\begin{align*}
&\mathbf{J}_{\vec{\pi}}(1;x,w,x,x,\ldots;u,u_\Delta,u_\Delta,\ldots)\\
&=\sum_{k=0}^\infty 
\int_U\int_{S}\int_U\cdots\int_{S}\int_U
g_{k+1}(x,w,x^2_1,\ldots,x^{k+1}_k; u,u_0,u_1,\ldots,u_{k-2},u_{k-1},u_k)\\
&\quad \vec{\mu}_k(\dd u_k\vert
 (1;x,w,x,x,\ldots;u,u_\Delta,u_\Delta,\ldots),
(2;x,w,x^2_1,x,x,\ldots;u,u_0,u_\Delta,u_\Delta,\ldots),\\
&\quad\quad (3;x,w,x,x^2_1,x^3_2,x,x,\ldots;u,u_0,u_1,u_\Delta,u_\Delta,\ldots), 
\ldots,\\
&\quad\quad (k+1;x,w,x,x^2_1,\ldots,x^{k+1}_k,x,x,\ldots;u,u_0,\ldots,u_{k-1},u_\Delta,u_\Delta,\ldots);
u_0,\ldots,u_{k-1})\\
&\quad p_k(\dd x^{k+1}_k\vert  x,w,x^2_1,\ldots,x^k_{k-1};u,u_0,\ldots,u_{k-1}) 
\cdots\\
&\quad \vec{\mu}_1(\dd u_1\vert  (1;x,w,x,x\ldots;u,u_\Delta,u_\Delta,\ldots), (2;x,w,x_1^2,x,x,\ldots;u,u_0,u_\Delta,u_\Delta,\ldots);u_0)\\
&\quad
p_1(\dd x_1^2\vert x,w;u,u_0) 
\,\vec{\mu}_0(\dd u_0\vert (1;x,w, x,x,\ldots;u,u_\Delta,u_\Delta,\ldots)),  
\end{align*}
where it might be also helpful to note that, by  \eqref{E:transition} and \eqref{E:MDP:component:notation}, we have
\begin{align*}
&\mathbf{t}(\dd\mathbf{x}_1\vert (1;x,w,x,x,\ldots;u,u_\Delta,u_\Delta,\ldots);u_0)
=\delta_2(\dd\iota_1)\,\delta_x(\dd x^0_1)\,\delta_w(\dd x^1_1)\\
&\qquad\cdot
p_1(\dd x^2_1\vert x,w;u,u_0)\,\prod_{j=3}^\infty \delta_x(\dd x^j_1)\,
\delta_u(\dd u^0_1)\, 
\delta_{u_0}(\dd u^1_1)\,\prod_{j=2}^\infty \delta_{u_\Delta}(\dd u^j_1),\text{ etc.}
\end{align*}
Next, changing the lower limit of summation  and then relabeling our integration variables, we obtain
\begin{align*}
&\mathbf{J}_{\vec{\pi}}(1;x,w,x,x,\ldots;u,u_\Delta,u_\Delta,\ldots)\\
&=\sum_{k=1}^\infty \int_U\int_S\int_U\cdots\int_S\int_U g_k(x,w,x^2,\ldots,x_{k-1}^k;u,u_0,u_{k-1})\\
&\qquad\qquad \nu_k(\dd u_{k-1}\vert x,w,x^2_1,\ldots,x^k_{k-1};u,u_0,\ldots,u_{k-2})\\
&\qquad\qquad p_{k-1}(\dd x^k_{k-1}\vert x,w,x^2_1,\ldots,x^{k-1}_{k-2};u,u_0,\ldots,u_{k-2})\cdots\\
&\qquad\qquad \nu_2(\dd u_1\vert x,w,x^2_1;u,u_0)\,p_1(\dd x^2_1\vert x,w;u,u_0)\,\nu_1(\dd u_0\vert x,w;u)\\
&=\sum_{k=1}^\infty \int_S\int_U\int_S\int_U\int_S\int_U\cdots\int_S\int_U
g_k(x_0,\ldots,x_k;u_0,\ldots,u_k)\\
&\qquad\qquad \nu_k(\dd u_k\vert x_0,\ldots,x_k;u_0,\ldots,u_{k-1})\,
p_{k-1}(\dd x_k\vert x_0,\ldots,x_{k-1};u_0,\ldots,u_{k-1})\cdots\\
&\qquad\qquad \nu_2(\dd u_1\vert x_0,x_1,x_2;u_0,u_1)\,p_1(\dd x_2\vert x_0,x_1;u_0,u_1)\,
\nu_1(\dd u_1\vert x_0,x_1;u_0)\\
&\qquad\qquad \delta_w(\dd x_1)\,\delta_u(\dd u_0)\,\delta_x(\dd x_0)\\
&=J_{\pi^1}(1;x,w;u).
\end{align*}
This yields Claim~2.

\medskip

\textit{Proof of Claim~3.} Fix a policy $\pi=(\mu_0,\mu_1,\ldots)\in\Pi^\prime$. Define a policy
$\vec{\pi}={\color{black}(\vec{\mu}_0,\vec{\mu}_1,\ldots)}\in\vec{\Pi}^\prime$ by  
\begin{align*}
\vec{\mu}_k(\dd u_k\vert \mathbf{x}_0,\ldots,\mathbf{x}_k;u_0,\ldots,u_{k-1}):=\mu_k(\dd u_k\vert x_0^0,\ldots,x_k^k;u_0,\ldots,u_{k-1}),\quad k\in\N_0.
\end{align*}
Then, similarly as in the proof of Claim~1, we have
\begin{align*}
J_\pi(x)&=\sum_{k=0}^\infty \int_U\int_S\int_U\cdots\int_S\int_U 
g_k(x,x^1_1,\ldots,x^k_k;u_0,\ldots,u_{k-1},u_k)\\
&\qquad \mu_k(\dd u_k\vert x,x^1_1,\ldots,x^k_k; u_0,\ldots,u_{k-1})\\
&\qquad 
p_{k-1}(\dd x^k_k\vert x,x^1_1,\ldots,x^{k-1},x_{k-1};u_0,\ldots,u_{k-1})\cdots\\
&\qquad \mu_1(\dd u_1\vert x,x^1_1;u_0)\,p_0(\dd x^1_1\vert x;u_0)\,\mu_0(\dd u_0\vert x)\\
&=\mathbf{J}_{\vec{\pi}}(\mathbf{x}).
\end{align*}
This yields Claim~3.

\medskip 

\textit{Proof of Claim~4.} Fix a policy $\pi^1=(\mu_1,\mu_2,\ldots)\in\Pi^1$. Define a policy $\vec{\pi}=(\vec{\nu}_0,\vec{\nu}_1,\ldots)\in\vec{\Pi}$ by 
\begin{align*}
{\color{black}\vec{\nu}_k}(\dd u_k\vert \mathbf{x}_0,\ldots,\mathbf{x}_k;u_0,\ldots,u_{k-1}):=
\mu_{k+1}(\dd u_k\vert x,w,x^2_1,\ldots,x^{k+1}_k;u,u_0,\ldots,u_{k-1})
\end{align*}
for each $k\in\N$. Then, similarly as in the proof of Claim~2, we have
\begin{align*}
J_{\pi^1}(1;x,w;u)&=\sum_{k=1}^\infty \int_U\int_S\int_U\cdots\int_S\int_U 
g_k(x,w,x_2,\ldots,x_k;u,u_1,\ldots,u_{k-1},u_k)\\
&\qquad  \mu_k(\dd u_k\vert x,w,x_2,\ldots,x_k;u,u_1,\ldots,u_{k-1})\\
&\qquad p_{k-1}(\dd x_k\vert x,w,x_2,\ldots,x_{k-1};u,u_1,\ldots,u_{k-1})\cdots\\
&\qquad \mu_2(\dd u_2\vert x,w,x_2;u,u_1)\,p_1(\dd x_2\vert x,w;u,u_1)\,\mu_1(\dd u_1\vert x,w;u)\\
&=\sum_{k=1}^\infty \int_U\int_S\int_U\cdots\int_S\int_U 
g_k(x,w,x^2_1,\ldots,x^k_{k-1};u,u_0,\ldots,u_{k-2},u_{k-1})\\
&\qquad  \mu_k(\dd u_{k-1}\vert x,w,x^2_1,\ldots,x^k_{k-1};u,u_0,\ldots,u_{k-2})\\
&\qquad p_{k-1}(\dd x^k_{k-1}\vert x,w,x^2_1,\ldots,x^{k-1}_{k-2};u,u_0,\ldots,u_{k-2})\cdots\\
&\qquad \mu_2(\dd u_1\vert x,w,x^2_1;u,u_0)\,p_1(\dd x^2_1\vert x,w;u,u_0)\,\mu_1(\dd u_0\vert x,w;u)\\
&=\sum_{k=0}^\infty \int_U\int_S\int_U\cdots\int_S\int_U 
g_{k+1}(x,w,x^2_1,\ldots,x^{k+1}_k;u,u_0,\ldots,u_{k-1},u_k)\\
&\qquad \mu_{k+1}(\dd u_k\vert x,w,x^2_1,\ldots,x^{k+1}_k;u,u_0,\ldots,u_{k-1})\\
&\qquad p_k(\dd x^{k+1}_k\vert x,w,x^2_1,\ldots,x^k_{k-1};u,u_0,\ldots,u_{k-1})\cdots\\
&\qquad \mu_2(\dd u_1\vert x,w,x^2_1;u,u_0)\,p_1(\dd x^2_1\vert x,w;u,u_0)\,\mu_1(\dd u_0\vert x,w;u)\\
&=\mathbf{J}_{\vec{\pi}}(1;x,w,x,x,\ldots;u,u_\Delta,u_\Delta,\ldots).
\end{align*}
This yields Claim~4.

Thanks to the four established claims above, we have  \eqref{E:MDP:Correspondence}.
This concludes the proof  of Theorem~\ref{T:OneStepDPP}. \qed

\subsection{Proof of Theorem~\ref{T:AppendixB:NonRandomized}.}\label{SS:T:AppendixB:NonRandomized}
We will make use of facts established in section~\ref{SS:T:OneStepDPP}.
Let $u_\Delta\in U$. Recall that, by \eqref{E:Augmented:Cost}, we have
\begin{align}\label{E:T:AppendixB:NonRandomized1}
\mathbf{J}^\ast(0;x,x,\ldots;u_\Delta,u_\Delta,\ldots)=
\inf_{\vec{\pi}\in\vec{\Pi}^\prime}\mathbf{J}_{\vec{\pi}}(0;x,x,\ldots;u_\Delta,u_\Delta,\ldots).
\end{align}
Since $\mathbf{g}$ is non-negative, the infimum in the right-hand side of \eqref{E:T:AppendixB:NonRandomized1}
 can, according to Proposition~9.19 of \cite{BS}, be taken over 
 \emph{non-randomized policies} $\vec{\pi}\in\vec{\Pi}^\prime$, i.e.,
 over every $\vec{\pi}=(\vec{\mu}_0,\vec{\mu}_1,\ldots)\in\vec{\Pi}^\prime$ such
 that, for each $k\in\N_0$, $\mathbf{x}_0$, $\ldots$, $\mathbf{x}_k\in\mathbf{S}$, and
 $u_0$, $\ldots$, $u_{k-1}\in U$, the measure
 $\vec{\mu}_k(\dd u_k\vert \mathbf{x}_0,\,\ldots,\,\mathbf{x}_k;\,u_0,\,\ldots,u_{k-1})$
 is a Dirac measure. 
 
 Next, following the proof of  Claim~1 in section~\ref{SS:T:OneStepDPP}, we can
 deduce via defining a corresponding non-randomized policy in $\Pi$ 
for any non-randomized policy in $\vec{\Pi}^\prime$ that
\begin{align}\label{E:T:AppendixB:NonRandomized2}
\mathbf{J}^\ast(0;x,x,\ldots;u_\Delta,u_\Delta,\ldots)\ge \inf_{\pi\in\Pi} J_\pi(x).
\end{align}

Similarly, following the proof of  Claim~3 in section~\ref{SS:T:OneStepDPP}, we can see that 
\begin{align}\label{E:T:AppendixB:NonRandomized3}
\mathbf{J}^\ast(0;x,x,\ldots;u_\Delta,u_\Delta,\ldots)\le \inf_{\pi\in\Pi} J_\pi(x).
\end{align}
Hence, by the first line of \eqref{E:MDP:Correspondence} together with \eqref{E:T:AppendixB:NonRandomized2} and
\eqref{E:T:AppendixB:NonRandomized3},  we immediately have \eqref{E:T:AppendixB:NonRandomizedMain}.
This concludes the proof of Theorem~\ref{T:AppendixB:NonRandomized}. \qed

\section{Some technical proofs}\label{A:TechnicalProofs}

\subsection{Proof of Lemma~\ref{L:phi:meas}}\label{S:A1}

First, we prove  the   auxiliary  Lemmas \ref{Claim1}, \ref{Claim2}, \ref{Claim3}. 
 
\begin{lemma}\label{Claim1}
Consider the maps 
$(s,x, {\color{black} \alpha})\mapsto\iota_i(s,x, {\color{black}\alpha})$,
$\R_+\times\Omega\times\mathcal{A}\to\Omega$, $i\in\{1,2\}$, and  
$\iota_3: (s,x,\alpha)\mapsto\iota_3(s,x,\alpha)$, 
 $\R_+\times\Omega\times\mathcal{A}\to\R_+\times\Omega{\color{black}\times\mathcal{A}}$,  respectively defined by 
\begin{align*}
[\iota_1(s,x,\alpha)](t)&:=\int_0^t f(s+r,x(\cdot\wedge s),\alpha(s+r))\,\dd r,\quad t\in\R_+,\\
[\iota_2(s,x,\alpha)](t)&:=\iota_2^t(s,x,\alpha),\quad t\in\R_+,\\
[\iota_3(s,x,\alpha)](t)&:=(s,x(t\wedge s),\alpha),\quad t\in\R_+,
\end{align*}
 where $\iota_2^t$ is given by 
 \begin{align}\label{E:iota2t}
\iota_2^t:(s,x,\alpha)\mapsto \int_0^t f(s+r,x,\alpha(s+r))\,\dd r,\quad \R_+\times\Omega\times\mathcal{A}\to\R^d.
\end{align}
We have that
\begin{itemize}
\item [(i)]the map $\iota_1$
 is measurable from $\mathcal{B}(\R_+)\otimes\cF^0\otimes\mathcal{B}(\mathcal{A})$ to $\cF^0$;
\item [(ii)]	the map $\iota_2$  is measurable from 
$\mathcal{B}(\R_+)\otimes\cF^0\otimes\mathcal{B}(\mathcal{A})$ to $\cF^0$;
\item [(iii)]the map $\iota_3$ is measurable from 
$\mathcal{B}(\R_+)\otimes\cF^0\otimes\mathcal{B}(\mathcal{A})$ to 
$\mathcal{B}(\R_+)\otimes\cF^0\otimes\mathcal{B}(\mathcal{A})$.
\end{itemize}
\end{lemma}
\noindent \textit{Proof of Lemma \ref{Claim1}.}
Item (i) directly follows from (ii)-(iii), noticing that 
 $\iota_1=\iota_2\circ\iota_3$. 
Item~(iii)   
{\color{black} follows from the measurability
of $(s,x,\alpha)\mapsto x(\cdot\wedge s)$, $\R_+\times\Omega\times\mathcal{A}\to\Omega$,  from 
$\mathcal{B}(\R_+)\otimes\cF^0\otimes\mathcal{B}(\mathcal{A})$ to $\cF^0$
due to} Theorem 96 (b), 146-IV, in 
\cite{DMprobPot} {\color{black}
together with Proposition~7.14 in \cite{BS}}. 
\noindent In order to prove item~(ii), we consider the map in \eqref{E:iota2t}.  
The proof  follows the line of the reasoning on 
p.~135
in \cite{Billingsley} regarding the measurability of path-space valued random elements,   using that

\medskip 

 (ii')
for any  $t\in\R_+$, the map $\iota_2^t$ is measurable from $\mathcal{B}(\R_+)\otimes \cF^0\otimes\mathcal{B}(\mathcal{A})$ to $\mathcal{B}(\R^d)$,

\medskip 

\noindent which we show next.

First note that 
the maps
\begin{align*}
s\mapsto\int_0^t f(s+r,x,\alpha(s+r))\,\dd r=\int_s^{s+t} f(r,x,\alpha(r))\,\dd r, 
\, \R_+\to\R^d,\, (x,\alpha)\in\Omega\times\mathcal{A},
\end{align*}
are continuous.

Moreover, for every  $s\in\R_+$, the map
\begin{align*}
 (x,\alpha)\mapsto \int_0^t f(s+r,x,\alpha(s+r))\,\dd r,\, 
\Omega\times\mathcal{A}\to\R^d,
\end{align*}
 is
$\cF^0_{t+s}\otimes\mathcal{B}(\mathcal{A})$- and hence $\cF^0\otimes\mathcal{B}(\mathcal{A})$-measurable. {\color{black}
To see this, consider the case that each component $f^j$ of $f=(f^1,\ldots,f^d)$ is of the form 
$$f^j(r,x,a)=f^j_1(x(\cdot\wedge (t+s)))\,f^j_2(r,a),$$ 
where $f^j_1$ is $\cF^0$- and $f^j_2$ is $\mathcal{B}(\R_+)\times\mathcal{B}
(A)$-measurable. Taking Remark~\ref{R:AAisBorelSpace}
into account, we can deduce the  $\cF^0\otimes\mathcal{B}(\mathcal{A})$-measurability of 
$$(x,\alpha)\mapsto \int_0^t f(s+r,x,\alpha(s+r))\,\dd r.$$ It remains to apply a monotone-class argument.
The reasoning here is very similar to the proof of Lemma~2 in \cite{Yushkevich80}.}

{\color{black} The above} two facts establish (ii') (see ~Proposition~1.13, p.~5, in \cite{KaratzasShreve}).
\qed
\normalcolor

\medskip 

\begin{lemma}\label{Claim2} The map $(s,x,\alpha)\mapsto \phi^{s,x,\alpha}_{(0)}$,
 $\R_+\times\Omega\times\mathcal{A}\to\Omega$,
defined by 
\begin{align*}
\phi^{s,x,\alpha}_{(0)}(t):=x(t).\bfone_{[0,s)}(t)+ \Bigl[x(t)+\int_s^t f(r,x(\cdot\wedge s),\alpha(r))\,\dd r\Bigr].\bfone_{
[s,\infty)}(t),\quad t\in\R_+,
\end{align*}
 is measurable from $\mathcal{B}(\R_+)\otimes\cF^0\otimes\mathcal{B}(\mathcal{A})$ to $\cF^0$.
\end{lemma}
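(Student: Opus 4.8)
The plan is to express $\phi^{s,x,\alpha}_{(0)}$ as a composition and sum of maps whose measurability has already been established in Lemma~\ref{Claim1}, together with an elementary observation about splicing a path before and after time $s$. First I would decompose
\[
\phi^{s,x,\alpha}_{(0)}(t)=x(t\wedge s)+[\iota_1(s,x,\alpha)](t-s)\,\bfone_{[s,\infty)}(t),
\]
noting that $x(t)\bfone_{[0,s)}(t)+x(t)\bfone_{[s,\infty)}(t)$ is \emph{not} quite what appears, so care is needed: the path $\phi^{s,x,\alpha}_{(0)}$ equals $x$ on $[0,s)$ and equals $x(\cdot)+\int_s^\cdot f(r,x(\cdot\wedge s),\alpha(r))\,\dd r$ on $[s,\infty)$. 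I would rewrite this as $\phi^{s,x,\alpha}_{(0)}=\Psi\bigl(x,\,s,\,\iota_1(s,x,\alpha)\bigr)$, where $\iota_1$ is the (already measurable) map from Lemma~\ref{Claim1}(i) whose value at $t$ is $\int_0^t f(s+r,x(\cdot\wedge s),\alpha(s+r))\,\dd r=\int_s^{s+t}f(r,x(\cdot\wedge s),\alpha(r))\,\dd r$, and $\Psi:\Omega\times\R_+\times\Omega\to\Omega$ is the ``time-shifted concatenation'' map
\[
[\Psi(x,s,y)](t):=x(t\wedge s)+y\bigl((t-s)^+\bigr)\,\bfone_{[s,\infty)}(t).
\]

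The key steps, in order, are: (1) record that $(x,s)\mapsto x(\cdot\wedge s)$ is $\cF^0\otimes\mathcal{B}(\R_+)$-to-$\cF^0$ measurable (this is Theorem~96, 146-IV, in \cite{DMprobPot}, already cited in section~\ref{S:Notation}); (2) observe that by Lemma~\ref{Claim1}(i) the map $(s,x,\alpha)\mapsto\iota_1(s,x,\alpha)$ is $\mathcal{B}(\R_+)\otimes\cF^0\otimes\mathcal{B}(\mathcal{A})$-to-$\cF^0$ measurable; (3) prove that $\Psi$ is measurable from $\cF^0\otimes\mathcal{B}(\R_+)\otimes\cF^0$ to $\cF^0$; and (4) conclude that $\phi^{s,x,\alpha}_{(0)}=\Psi\bigl(x,s,\iota_1(s,x,\alpha)\bigr)$ is measurable by composing (2) and (3) with the obvious projections, which are $\mathcal{B}(\R_+)\otimes\cF^0\otimes\mathcal{B}(\mathcal{A})$-measurable. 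For step~(3), since $\cF^0=\vee_{t\ge0}\cF^0_t$ is generated by the coordinate evaluations $\omega\mapsto\omega(t)$, it suffices to check that for each fixed $t\in\R_+$ the map $(x,s,y)\mapsto[\Psi(x,s,y)](t)=x(t\wedge s)+y((t-s)^+)\bfone_{\{s\le t\}}$ is $\cF^0\otimes\mathcal{B}(\R_+)\otimes\cF^0$-measurable; this follows because $(x,s)\mapsto x(t\wedge s)$ is jointly measurable (evaluation composed with the stopping map from step~(1)), $(y,s)\mapsto y((t-s)^+)$ is jointly measurable for the same reason, and $s\mapsto\bfone_{\{s\le t\}}$ is Borel, so the product and sum are measurable.

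The main obstacle I anticipate is step~(3), the joint measurability of the concatenation/splicing map $\Psi$ in all three arguments simultaneously — in particular handling the dependence on $s$ both through the truncation $x(\cdot\wedge s)$ and through the shift $y(\cdot-s)$, and verifying that the resulting path indeed lies in $\Omega=D(\R_+,E)$ (right-continuity with left limits is preserved because $x$ is c\`adl\`ag, $y$ is c\`adl\`ag, the shift of a c\`adl\`ag path is c\`adl\`ag, and the only new potential discontinuity is at $t=s$, which is fine for a c\`adl\`ag path). The cleanest way around this is to reduce, as indicated, to checking measurability coordinate-by-coordinate in $t$ using that $\cF^0$ is the $\sigma$-field generated by evaluations, which turns a statement about path-space-valued maps into a statement about finitely many scalar-valued maps; the joint measurability of $(x,s)\mapsto x(t\wedge s)$ and $(y,s)\mapsto y((t-s)^+)$ then follows from the already-cited results in \cite{DMprobPot} about the stopping operator, exactly as in the proof of Lemma~\ref{Claim1}. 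An alternative, if one prefers to avoid re-deriving $\Psi$, is to mimic directly the argument on pp.~84 and 135 of \cite{Billingsley} used in Lemma~\ref{Claim1}(ii), i.e.\ show that $(s,x,\alpha)\mapsto[\phi^{s,x,\alpha}_{(0)}](t)$ is measurable for each $t$ and that the resulting family of maps has c\`adl\`ag paths, and invoke the same path-space measurability criterion.
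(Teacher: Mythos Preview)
Your proposal is correct and takes essentially the same approach as the paper: both write $\phi^{s,x,\alpha}_{(0)}$ as the composition of the measurable map $\iota_1$ from Lemma~\ref{Claim1}(i) with a path-splicing operator (your $\Psi$, the paper's $\iota_4$ defined by $[\iota_4(s,x,\tilde{x})](t):=x(t)\bfone_{[0,s)}(t)+\tilde{x}(t-s)\bfone_{[s,\infty)}(t)$). The only difference is that the paper obtains the measurability of the splicing map by a direct citation of Theorem~96(d), 146-IV, in \cite{DMprobPot}, whereas you re-derive it coordinate-by-coordinate; your alternative route via the Billingsley path-space criterion is also fine and is exactly how the paper handles the analogous point in Lemma~\ref{Claim1}(ii).
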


\noindent \textit{Proof of Lemma \ref{Claim2}.} By Theorem 96 (d), 146-IV, in 
\cite{DMprobPot}, the map $\iota_4:(s,x,\tilde{x})\mapsto \iota_4(s,x)$, 
$\R_+\times\Omega\times\Omega\to\Omega$, defined by
\begin{align*}
[\iota_4(s,x,\tilde{x})](t):=x(t).\bfone_{[0,s)}(t)+\tilde{x}(t-s).\bfone_{[s,\infty)}(t),\quad
t\in\R_+,
\end{align*}
is measurable from $\mathcal{B}(\R_+)\otimes\cF^0\otimes\cF^0$ to $\cF^0$.
This  allows to conclude the proof, being 
$\phi^{s,x,\alpha}_{(0)}=\iota_4(s,x,\iota_1(s,x,\alpha))$,  
and recalling  by Lemma \ref{Claim1}-(i) the measurability of
$(s,x,\alpha)\mapsto (s,x,\iota_1(s,x,\alpha))$, $\R_+\times\Omega\times\mathcal{A}\to\R_+\times\Omega$,
from $\mathcal{B}(\R_+)\otimes\cF^0\otimes\mathcal{B}(\mathcal{A})$ 
to $\mathcal{B}(\R_+)\otimes\cF^0\otimes\cF^0$. 
\qed 

\begin{lemma} \label{Claim3} Consider the mappings
$(s,x,\alpha)\mapsto\phi^{s,x,\alpha}_{(n)}$, $\R_+\times\Omega\to\Omega$, $n\in\N$,
recursively defined by
\begin{align*}
\phi^{s,x,\alpha}_{(n+1)}(t):=x(t).\bfone_{[0,s)}(t)+
\Bigl[
x(t)+\int_s^t f(r,\phi^{s,x}_{\text{($n$)}},\alpha(r))\,\dd r
\Bigr].\bfone_{[s,\infty)}(t),\quad t\in\R_+. 
\end{align*}
Then, for every $n\in\N_0$, the map $(s,x,\alpha)\mapsto\phi^{s,x,\alpha}_{(n)}$
is measurable from $\mathcal{B}(\R_+)\otimes\cF^0\otimes\mathcal{B}(\mathcal{A})$ to $\cF^0$.
\end{lemma}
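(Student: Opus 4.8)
\textit{Proof plan.} The plan is to argue by induction on $n$. The base case $n=0$ is precisely Lemma~\ref{Claim2}, which has already been established. For the inductive step, assume that $(s,x,\alpha)\mapsto\phi^{s,x,\alpha}_{(n)}$ is measurable from $\mathcal{B}(\R_+)\otimes\cF^0\otimes\mathcal{B}(\mathcal{A})$ to $\cF^0$; I must show the same for $\phi^{s,x,\alpha}_{(n+1)}$. The idea is simply that $\phi^{s,x,\alpha}_{(n+1)}$ is built from the \emph{same} measurable building blocks used in the proof of Lemma~\ref{Claim2} to treat $\phi^{s,x,\alpha}_{(0)}$, except that the path $\phi^{s,x,\alpha}_{(n)}$ now plays the role of $x(\cdot\wedge s)$ in the argument of $f$.

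The one point to watch is that in $\phi^{s,x,\alpha}_{(n+1)}$ the drift is integrated against the \emph{untruncated} path $\phi^{s,x,\alpha}_{(n)}$: since $f$ is non-anticipating we have $f(r,\phi^{s,x,\alpha}_{(n)},\alpha(r))=f(r,\phi^{s,x,\alpha}_{(n)}(\cdot\wedge r),\alpha(r))$, which genuinely depends on $\phi^{s,x,\alpha}_{(n)}$ beyond time $s$ as soon as $n\ge 0$. Hence the relevant integral map is $\iota_2$ of Lemma~\ref{Claim1} (the untruncated version, not $\iota_1=\iota_2\circ\iota_3$), and
\begin{align*}
[\iota_2(s,\phi^{s,x,\alpha}_{(n)},\alpha)](t-s)=\int_0^{t-s}f\bigl(s+r,\phi^{s,x,\alpha}_{(n)},\alpha(s+r)\bigr)\,\dd r=\int_s^t f\bigl(u,\phi^{s,x,\alpha}_{(n)},\alpha(u)\bigr)\,\dd u .
\end{align*}
Composing this with the concatenation map $\iota_4$ from the proof of Lemma~\ref{Claim2} exactly as there (with $\phi^{s,x,\alpha}_{(n)}$ in place of $x(\cdot\wedge s)$) gives a representation of the form $\phi^{s,x,\alpha}_{(n+1)}=\iota_4\bigl(s,x,\iota_2(s,\phi^{s,x,\alpha}_{(n)},\alpha)\bigr)$, of the very same shape as the representation $\phi^{s,x,\alpha}_{(0)}=\iota_4(s,x,\iota_1(s,x,\alpha))$ used in Lemma~\ref{Claim2}.

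Measurability of $\phi^{s,x,\alpha}_{(n+1)}$ then follows by composing three measurable maps: first $(s,x,\alpha)\mapsto(s,x,\phi^{s,x,\alpha}_{(n)},\alpha)$, which is $\mathcal{B}(\R_+)\otimes\cF^0\otimes\mathcal{B}(\mathcal{A})$-to-$\mathcal{B}(\R_+)\otimes\cF^0\otimes\cF^0\otimes\mathcal{B}(\mathcal{A})$ measurable by the inductive hypothesis together with the identity maps on the $s$- and $\alpha$-coordinates; then $(s,x,y,\alpha)\mapsto(s,x,\iota_2(s,y,\alpha))$, measurable by Lemma~\ref{Claim1}(ii); and finally $(s,x,z)\mapsto\iota_4(s,x,z)$, measurable into $\cF^0$ by Theorem~96~(d), 146-IV, in \cite{DMprobPot}, as already invoked in the proof of Lemma~\ref{Claim2}. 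This closes the induction.

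I do not expect a real obstacle here: the substance was already carried out in Lemmas~\ref{Claim1} and \ref{Claim2}, and the inductive step is a routine composition of measurable maps. The only thing to be careful about is to feed the untruncated integral map $\iota_2$ (rather than $\iota_1$) with the path $\phi^{s,x,\alpha}_{(n)}$, since from the first Picard iterate onward the drift is integrated against a path that is no longer frozen at time $s$.
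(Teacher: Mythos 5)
Your proof is correct and follows essentially the same approach as the paper: induction on $n$ with base case Lemma~\ref{Claim2}, the representation $\phi^{s,x,\alpha}_{(n+1)}=\iota_4\bigl(s,x,\iota_2(s,\phi^{s,x,\alpha}_{(n)},\alpha)\bigr)$, and the substitution of Lemma~\ref{Claim1}(ii) for Lemma~\ref{Claim1}(i). Your remark on why the untruncated map $\iota_2$ is needed (the integrand genuinely depends on $\phi^{s,x,\alpha}_{(n)}$ past time $s$) is exactly the point the paper flags, just stated more explicitly.
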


\noindent \emph{Proof of Lemma \ref{Claim3}}.
Fix $n\in\N_0$. Assume that  $(s,x,\alpha)\mapsto\phi^{s,x,\alpha}_{(n)}$
is measurable from $\mathcal{B}(\R_+)\otimes\cF^0\otimes\mathcal{B}(\mathcal{A})$ to $\cF^0$.
To show that $(s,x,\alpha)\mapsto\phi^{s,x,\alpha}_{(n+1)}$
is measurable from $\mathcal{B}(\R_+)\otimes\cF^0\otimes\mathcal{B}(\mathcal{A})$ to $\cF^0$,
one can proceed essentially as in the proof of Lemma~\ref{Claim2} but one should
use Lemma \ref{Claim1}-(ii) instead of Lemma \ref{Claim1}-(i) and note that
$$\phi^{s,x,\alpha}_{(n+1)}=\iota_4(s,x,\iota_2(s,\phi^{s,x,\alpha}_{(n)},\alpha)).
$$
Finally, note that, by Lemma \ref{Claim2}, 
$(s,x,\alpha)\mapsto\phi^{s,x,\alpha}_{(0)}$
is measurable from $\mathcal{B}(\R_+)\otimes\cF^0\otimes\mathcal{B}(\mathcal{A})$ to $\cF^0$.
The proof is concluded by  mathematical induction.
\qed

\medskip 

We can finally prove Lemma \ref{L:phi:meas}.

\medskip

\noindent \emph{Proof of Lemma \ref{L:phi:meas}}. 
By Lemma \ref{Claim3}, 
 for every $t\in\R_+$, 
the function $$
(s,x,\alpha)\mapsto\phi^{s,x,\alpha}(t), \,\,\R_+\times\Omega\times\mathcal{A}\to\R^d,
$$ is measurable from  
$\mathcal{B}(\R_+)\otimes\cF^0\otimes\mathcal{B}(\mathcal{A})$ to 
$\mathcal{B}(\R^d)$
as the limit of the functions
$$(s,x,\alpha)\mapsto\phi^{s,x,\alpha}_{(n)}(t), \,\,\R_+\times\Omega\times\mathcal{A}\to\R^d, n\in\N,
$$ which are all
 measurable from  $\mathcal{B}(\R_+)\otimes\cF^0\otimes\mathcal{B}(\mathcal{A})$
  to $\mathcal{B}(\R^d)$.
  
 Moreover, for each $(s,x,\alpha)\in\R_+\times\Omega\times\mathcal{A}$,
  the function $t\mapsto\phi^{s,x,\alpha}(t)$,
 $\R_+\to\R^d$, is right-continuous. Thus, following the proof of Proposition~1.13 on p.~5 in \cite{KaratzasShreve}, one can show that 
 $$(s,x,\alpha,t)\mapsto\phi^{s,x,\alpha}(t),\,\,\R_+\times\Omega\times A\times\R_+\to\R_+,
 $$ is 
 measurable from $\mathcal{B}(\R_+)\otimes\cF^0\otimes\mathcal{B}(\mathcal{A})\otimes \mathcal{B}(\R_+)$ 
 to $\mathcal{B}(\R^d)$.
 Hence, for every $t\in\R_+$, $$(s,x,\alpha)\mapsto\phi^{s,x,\alpha}(t),\,\,\R_+\times\Omega\to\R^d,
 $$
  is measurable from 
 $\mathcal{B}(\R_+)\otimes\cF^0\otimes\mathcal{B}(\mathcal{A})$ to $\mathcal{B}(\R^d)$,
 from which we finally can conclude, exactly as in the proof of Lemma \ref{Claim1}-(ii), that
 $(s,x,\alpha)\mapsto \phi^{s,x,\alpha}$ is measurable from 
 $\mathcal{B}(\R_+)\otimes\cF^0\otimes\mathcal{B}(\mathcal{A})$ to $\cF^0$.
 \qed

\subsection{Proof of Lemma~\ref{L:barpsi:reg:Step0}}\label{S:A2}
 Without loss of generality, let $s_1=0$ and $s_2=T$.
 
 
 Fix $s\in [0,T]$, $x$, $\tilde{x}\in \Omega$, 
 $a\in\mathcal{A}$, and $e\in\R^d$. Put
 \begin{align*}
 (\phi,\ell,\lambda,\chi,Q)&:=(\phi^{s,x,a},\ell^{s,x,a},\lambda^{s,x,a},\chi^{s,x,a},Q^{s,x,a}),\\
  (\tilde{\phi},\tilde{\ell},\tilde{\lambda},\tilde{\chi},\tilde{Q})&:=
  (\phi^{s,\tilde{x},a},\ell^{s,\tilde{x},a},\lambda^{s,\tilde{x},a},\chi^{s,\tilde{x},a},Q^{s,\tilde{x},a}).
 \end{align*}
By Assumption~\ref{A:data:controlled}, for all $t\in [s,T]$,
 \begin{equation}\label{E:Pf:barpsi:reg:Step0}
 \begin{split}
 \abs{\phi(t)-\tilde{\phi}(t)}&\le \ee^{L_f(t-s)}\norm{x-\tilde{x}}_s\text{ (Proposition~7.2~(i) in \cite{BK18JFA}),}\\
 \abs{\ell(t)-\tilde{\ell}(t)}&\le L_f\norm{\phi-\tilde{\phi}}_t\le L_f\ee^{L_f(t-s)}\norm{x-\tilde{x}}_s,\\
  \abs{\lambda(t)-\tilde{\lambda}(t)}&\le L_f\norm{\phi-\tilde{\phi}}_t\le L_f\ee^{L_f(t-s)}\norm{x-\tilde{x}}_s,\\
  \abs{\chi(t)-\tilde{\chi}(t)}&\le \int_s^t L_f|\phi(r)-\tilde{\phi}(r)|\,\dd r\text{ (p.~174 in \cite{DavisBook})}\\
  &\le L_f(t-s) \ee^{L_f(t-s)}\norm{x-\tilde{x}}_s.
  \end{split}
  \end{equation}
 Let us set
 \begin{align*}
 [(G_{0,T;\eta,a})\psi](s,x)-[(G_{0,T;\eta,a})\psi](s,\tilde{x})=I_1+I_2+I_3,
 \end{align*}
 where
 \begin{align*}
 	 I_1&:=\chi(T)\,[\eta(T,\phi)-\eta(T,\tilde{\phi})]+[\chi(T)-\tilde{\chi}(T)]\,\eta(T,\tilde{\phi}),\\
 	 I_2&:=\int_s^T \chi(t)[\ell(t)-\tilde{\ell}(t)]+[\chi(t)-\tilde{\chi}(t)]\tilde{\ell}(t)\,\dd t,\\
 	 I_3&:=J_1+J_2+J_3+J_4,
 \end{align*}
 with 
 	\begin{align*}
  J_1&:=\int_s^T \Bigl[
  \int_{\R^d} \psi(t,\phi\otimes_t e)\lambda(t)\chi(t)\,Q(t,\dd e)
  -\int_{\R^d} \psi(t,\phi\otimes_t e)\lambda(t)\chi(t)\,\tilde{Q}(t,\dd e)
  \Bigr]\,\dd t,\\
  J_2&:=\int_s^T \Bigl[
  \int_{\R^d} \psi(t,\phi\otimes_t e)\lambda(t)[\chi(t)-\tilde{\chi}(t)]\,\tilde{Q}(t,\dd e)
  \Bigr]\,\dd t,\\
   J_3&:=\int_s^T \Bigl[
  \int_{\R^d} \psi(t,\phi\otimes_t e)[\lambda(t)-\tilde{\lambda}(t)]\tilde{\chi}(t)\,\tilde{Q}(t,\dd e)
  \Bigr]\,\dd t,\\
   J_4&:=\int_s^T \Bigl[
  \int_{\R^d} [\psi(t,\phi\otimes_t e)-\psi(t,\tilde{\phi}\otimes_t e)]\tilde{\lambda}(t)\tilde{\chi}(t)\,\tilde{Q}(t,\dd e)
  \Bigr]\,\dd t.
 \end{align*}
  Then,
 \begin{align*}
 I_1&=\chi(T)\,[\eta(T,\phi)-\eta(T,\tilde{\phi})]+[\chi(T)-\tilde{\chi}(T)]\,\eta(T,\tilde{\phi})\\
 &\le c^\prime \ee^{L_f(T-s)}\norm{x-\tilde{x}}_s
 +L_f(T-s) \ee^{L_f(T-s)}\norm{x-\tilde{x}}_s\,  \norm{\eta}_\infty\\
 &\le \left[c^\prime\ee^{L_f T} + (T-s)\check{L}\right]\norm{x-\tilde{x}}_s
 \end{align*}
 thanks to \eqref{E:Pf:barpsi:reg:Step0} and Assumption~\ref{A:data:controlled}, and 
 \begin{align*}
 I_2&=\int_s^T \chi(t)[\ell(t)-\tilde{\ell}(t)]+[\chi(t)-\tilde{\chi}(t)]\tilde{\ell}(t)\,\dd t\\
 &\le L_f(T-s) \ee^{L_f(T-s)}\norm{x-\tilde{x}}_s+
 C_f(T-s) \ee^{L_f(T-s)}\norm{x-\tilde{x}}_s\\
  &\le  (T-s)\check{L}\norm{x-\tilde{x}}_s
 \end{align*}
  thanks to \eqref{E:Pf:barpsi:reg:Step0} and Assumption~\ref{A:data:controlled}.
  
Moreover, 
  \begin{align*}
  J_1&=\int_s^T \Bigl[
  \int_{\R^d} \psi(t,\phi\otimes_t e)\lambda(t)\chi(t)\,Q(t,\dd e)
  -\int_{\R^d} \psi(t,\phi\otimes_t e)\lambda(t)\chi(t)\,\tilde{Q}(t,\dd e)
  \Bigr]\,\dd t\\
  &\le C_\lambda(T-s) L_Q\,c\,\ee^{L_f(T-s)}\,\norm{x-\tilde{x}}_s\\
  & \le  (T-s)\check{L}\,c\,\norm{x-\tilde{x}}_s
  \end{align*}
  thanks to Assumptions~\ref{A:data:controlled} and \ref{A:Q} together with \eqref{E:Lip:L:barpsi:reg:Step0} and
  \eqref{E:Pf:barpsi:reg:Step0},
  \begin{align*}
  J_2&=\int_s^T \Bigl[
  \int_{\R^d} \psi(t,\phi\otimes_t e)\lambda(t)[\chi(t)-\tilde{\chi}(t)]\,\tilde{Q}(t,\dd e)
  \Bigr]\,\dd t\\
  &\le (T-s)\norm{\psi}_\infty\, C_\lambda\,L_f(T-s)\ee^{L_f(T-s)}\norm{x-\tilde{x}}_s\\
    & \le  (T-s)\check{L}\,\norm{\psi}_\infty\,\norm{x-\tilde{x}}_s
  \end{align*}
  thanks to \eqref{E:Pf:barpsi:reg:Step0} and Assumption~\ref{A:data:controlled},
   \begin{align*}
  J_3&=\int_s^T \Bigl[
  \int_{\R^d} \psi(t,\phi\otimes_t e)[\lambda(t)-\tilde{\lambda}(t)]\tilde{\chi}(t)\,\tilde{Q}(t,\dd e)
  \Bigr]\,\dd t\\
  &\le (T-s)\norm{\psi}_\infty\, L_f \ee^{L_f(T-s)}\,\norm{x-\tilde{x}}_s\\
      & \le  (T-s)\check{L}\,\norm{\psi}_\infty\,\norm{x-\tilde{x}}_s
  \end{align*}
  thanks to \eqref{E:Pf:barpsi:reg:Step0} and Assumption~\ref{A:data:controlled}, and
     \begin{align*}
  J_4&=\int_s^T \Bigl[
  \int_{\R^d} [\psi(t,\phi\otimes_t e)-\psi(t,\tilde{\phi}\otimes_t e)]\tilde{\lambda}(t)\tilde{\chi}(t)\,\tilde{Q}(t,\dd e)
  \Bigr]\,\dd t\\
  &\le (T-s)\,c\,\ee^{L_f(T-s)}\,\norm{x-\tilde{x}}_s\,C_\lambda\\
      & \le (T-s)\check{L}\,c\,\norm{x-\tilde{x}}_s
  \end{align*}
  thanks to  \eqref{E:Lip:L:barpsi:reg:Step0}, \eqref{E:Pf:barpsi:reg:Step0} and Assumption~\ref{A:data:controlled}. Therefore 
  we can see that \eqref{E:GpsiStab:xe} holds.
 \qed

 \section{On regularity of certain functions on Skorokhod space}\label{S:Appendix:D}

 \begin{example}\label{Example1}
This example is closely related to Example~2.15~(i) in \cite{Keller16SPA}. Let $d=1$.
Consider the function $u:[0,T]\times\Omega\to\R$ defined by $u(t,x):=\norm{x(\cdot\wedge t)}_\infty$.
 Fix $t_0\in [0,T)$ and consider the path $x_0:=(-2).\bfone_{[t_0,T)}$. 

 (i) Note that
$u(t_0,x_0\otimes_{t_0} (-1))=1$, but for every sufficiently small $\eps>0$,
$u(t_0+\eps,x_0\otimes_{t_0+\eps}(-1))=2$, i.e., the map
$t\mapsto u(t,x_0\otimes_t (-1))$ is not right-continuous at $t=t_0$.

 (ii) Note that $-u$ is continuous, but $(t,x)\mapsto -u(t,x\otimes_t (-1))$ is not lower semi-continuous
as, by (i),
\begin{align*} 
\liminf_{n\to\infty} \left[-u(t_0+n^{-1},x_0\otimes_{t_0+n^{-1}} (-1))\right]=-2 \not\ge -1=-u(t_0,x_0\otimes_{t_0} (-1))
\end{align*}
and $\mathbf{d}_\infty((t_0+n^{-1},x_0),(t_0,x_0))=n^{-1}\to 0$ as $n\to\infty$.
\end{example}

\begin{remark}
Continuity (or semi-continuity) of a function $u:[0,T]\times\Omega\to\R$ does not, in general, imply continuity
(or semi-continuity) of
the functions $(t,x)\mapsto u(t,x\otimes_t e)$, $e\in\R^d$ (see Example~\ref{Example1} as well as 
(an appropriate modification of) Example~2.15~(iv) in \cite{Keller16SPA}). Also note
that this topic (in a very similar but not identical context) has been addressed in section~2.4 of  \cite{Keller16SPA}.
\end{remark}

 \medskip

\medskip

\medskip

\bibliographystyle{amsplain}

\bibliography{PDP}
\end{document}